\documentclass{amsart}
\usepackage{amssymb}
\usepackage{hyperref,color,xcolor}
\definecolor{wong_orange}{RGB}{230, 159, 0}
\definecolor{wong_skyblue}{RGB}{86, 180, 233}
\definecolor{wong_bluishgreen}{RGB}{0, 158, 115}
\definecolor{wong_yellow}{RGB}{240, 228, 66}
\definecolor{wong_blue}{RGB}{0, 114, 178}
\definecolor{wong_vermillion}{RGB}{213, 194, 0}
\definecolor{wong_reddishpurple}{RGB}{204, 121, 167}

\usepackage{tikz}
\usetikzlibrary{arrows,graphs,graphs.standard}

\theoremstyle{plain}
\newtheorem{thm}{Theorem}[section]
\newtheorem{prop}[thm]{Proposition}
\newtheorem{lem}[thm]{Lemma}
\newtheorem{cor}[thm]{Corollary}

\theoremstyle{definition}
\newtheorem{defn}[thm]{Definition}

\theoremstyle{remark}

\numberwithin{equation}{section}

\newcommand{\N}{\mathbb{N}}
\newcommand{\seq}[1]{\langle #1 \rangle}

\newcommand{\leW}{\leq_{\mathrm{W}}}

\newcommand{\lecW}{\leq^\ast_{\mathrm{W}}}
\newcommand{\sleW}{<_{\mathrm{W}}}
\newcommand{\lesW}{\leq_{\mathrm{sW}}}
\newcommand{\lecsW}{\leq^\ast_{\mathrm{sW}}}

\newcommand{\eqW}{\equiv_{\mathrm{W}}}
\newcommand{\eqsW}{\equiv_{\mathrm{sW}}}
\newcommand{\wphp}[3]{(#2\not\hookrightarrow #3)#1}

\newcommand{\C}{\mathsf{C}}

\newcommand{\AoUC}{\mathsf{AUC}}

\newcommand{\mflim}{\mathsf{lim}}
\newcommand{\id}{\mathsf{id}}
\newcommand{\ACC}{\mathsf{ACC}}

\newcommand{\mfP}{\mathsf{P}}
\newcommand{\mfQ}{\mathsf{Q}}

\newcommand{\gred}{\leq}

\newcommand{\gequiv}{\equiv}

\begin{document}

\title{Finite combinatorics and computability theory}


\author[Dzhafarov]{Damir D. Dzhafarov}
\address{Department of Mathematics\\University of Connecticut\\Storrs CT, USA 06107}
\email{damir@math.uconn.edu}

\author[Goh]{Jun Le Goh}
\address{Department of Mathematics\\National University of Singapore\\Singapore 119076}
\email{gohjunle@nus.edu.sg}

\date{\today}

\begin{abstract}
	We prove that the existence of finite combinatorial objects such as affine planes, mutually orthogonal Latin squares, and resolvable balanced incomplete block designs can be reformulated as the existence of certain algorithmic reductions between problems related to the pigeonhole principle. We then study the latter using counting arguments and computability theory. In particular, we demonstrate that computability theoretic techniques can be used to refine and prove new results in finite combinatorics.
\end{abstract}

\thanks{The authors thank the Institute for Mathematical Sciences at the National University of Singapore for hosting them in the summer of 2023, during which time this project was begun. The authors thank David Belanger for helpful discussions. Dzhafarov was partially supported by grants DMS 2452079 and DMS 1854355 from the National Science Foundation of the United States. Goh thanks Hao Huang and Jeck Lim for helpful conversations.}

\maketitle

\section{Introduction}\label{sec:intro}

Notions of reduction are widespread in mathematics and computer science. In computational complexity theory, say, one considers polynomial-time reductions between decision problems, or counting problems involving finite objects. In computability theory, one instead studies reductions defined in terms of Turing reducibility. Informally, a function $f : \mathbb{N} \to \mathbb{N}$ is \emph{Turing reducible} to (or \emph{computable from}) a function $g : \mathbb{N} \to \mathbb{N}$, written $f \leq_{\rm T} g$, if there is an algorithm for computing $f(n)$ from $n$ that is allowed to query the graph of $g$ during the computation process (in parlance, the algorithm uses $g$ as an \emph{oracle}). The definition can be extended from functions to subsets of $\mathbb{N}$, by identifying these with their characteristic functions. A function (or set) is \emph{computable} if it is Turing reducible to $\emptyset$. Importantly, the definition of Turing reducibility is not concerned with time or space complexity, which makes all finite sets computable, regardless of size. Reductions defined in terms of Turing reducibility are thus most often studied between problems that involve infinite sets.

As an example, consider the problem of finding an infinite monotone sequence in a given countable linear order. We can reduce this problem to that of finding an infinite homogeneous set for a given coloring of the unordered pairs of integers (i.e., to Ramsey's theorem for pairs). Indeed, suppose we are given a countable linear order; say its domain is $\mathbb{N}$, and the ordering relation is $\leq_L$. Define a coloring $c$ of the unordered pairs of integers as follows: for $x <y$ in $\mathbb{N}$, set $c(\{x,y\})$ to be $0$ if $x <_L y$, and $1$ if $x >_L y$. Now it is easily seen that if $H$ is any infinite homogeneous set for $c$ (i.e., if $c$ is constant on the unordered pairs of elements of $H$) then $H$ is monotone for $L$.

In the above example, (the graph of) $c$ is not only computable from (the graph of) $\leq_L$, but it is \emph{uniformly} computable. That is, the algorithm for converting $\leq_L$ to $c$ does not depend on $\leq_L$. Of course, each output of the algorithm (color of a given unordered pair) depends on the answers to the queries the algorithm makes of the oracle (how certain elements are ordered). But which queries should be made, and what the algorithm should do with the possible answers to these queries, is fixed ahead of time. We refer the reader to Soare \cite{Soare-1987} and Downey and Hirschfeldt \cite{DH-2010} for general background on computability theory.

Reductions defined in terms of uniform computability are a central focus in the area of computable analysis, with \emph{Weihrauch reducibility} being arguably the most prominent. See Brattka, Gherardi, and Pauly \cite{bgp21} for a general introduction, and Section \ref{sec:limprobs} below for some of the key definitions. Weihrauch reducibility has been widely used to study relationships between problems in analysis (cf., e.g., \cite{Brattka-2017, BHK-2018, BLMP-2019, HM-2023, HRW-2012}), infinitary combinatorics (cf.~\cite{BR-2017, ddhms16, dghpp20, DPSW-2017, Goh-2020, HJ-2016, Patey-2016a}), topology (cf., e.g.,~\cite{Benham-TA, BDDSV-2024, Genovesi-TA}), and other areas. See Brattka~\cite{Brattka-bib} for a complete bibliography of work in this area. Our interest in this article will be to apply the framework of Weihrauch reducibility to the study of certain problems from finite combinatorics, specifically the finite pigeonhole principle. Although computability theory on finite sets and objects is trivial, uniform computability is not. And, as we will see in Sections \ref{sec:m->n_id_k} and \ref{sec:limprobs}, to gain a more complete picture of the situation surrounding finitary problems requires also looking at infinitary versions.

Part of the motivation for us comes from \emph{reverse mathematics}, an area in the foundation of mathematics that seeks to gauge the logical strength of theorems in terms of the minimal axioms needed to prove them. Standard references are Simpson \cite{Simpson-2009}, Hirschfeldt \cite{Hirschfeldt-2014}, and Dzhafarov and Mummert \cite{DM-2022}. There is a deep connection between computability theory and reverse mathematics (see Shore \cite{Shore-2010}). Recent years have seen a flurry of activity also at the interface of reverse mathematics and Weihrauch reducibility (see, e.g., \cite{ddhms16}, \cite{DPSW-2017}, \cite{GM-2008}, \cite{HJ-2016}). The study of pigeonhole principles in reverse mathematics has a long history, with seminal works by Paris and Kirby \cite{PK-1978}, Hirst \cite{Hirst-1987}, and many others. Our paper is inspired by recent work of Belanger, Chong, Wang, Wong, and Yang \cite{BCWWY-2021}, who showed that the existence of certain highly random binary sequences can be axiomatized in terms of weak pigeonhole principles. These give rise to the principles $\wphp{}{m}{n}$, that we introduce below and investigate here under Weihrauch reducibility.

We begin by stating precisely what we will mean, throughout, by \emph{problems}.

\begin{defn}\
\begin{enumerate}
	\item An \emph{instance-solution problem} (or just \emph{problem}) $\mathsf{P}$ consists of a nonempty set of \emph{instances}, and for each instance $x$, a nonempty set of \emph{solutions} to $x$. When discussing multiple problems, we may qualify the above by referring to \emph{$\mathsf{P}$-instances} and \emph{$\mathsf{P}$-solutions}.
	\item The set of instances of $\mathsf{P}$ is denoted $\operatorname{dom}(\mathsf{P})$, and the set of solutions to $x \in \operatorname{dom}(\mathsf{P})$ is denoted by $\mathsf{P}(x)$.
\end{enumerate}	
\end{defn}

\noindent While quite general, we will only be concerned in problems whose instances and solutions are (or can be coded/represented by) elements of a ``standard space'' like $\mathbb{N}^\mathbb{N}$ (i.e., the set of functions from $\mathbb{N} \to \mathbb{N}$). This includes a great many kinds of objects. For starters, using any standard pairing function we may encode ordered pairs of numbers $(x,y)$ by a single number, denoted $\seq{x,y}$. (See, e.g., \cite{Soare-1987}, p.~3.) We can then extend this to ordered sequences of arbitrary finite length in the obvious way. Thus, in particular, we can code functions $f : \mathbb{N}^n \to \mathbb{N}$ for any $n \geq 1$, and indeed, $f : X^n \to Y$ for any $X,Y \subseteq \mathbb{N}$. We can also code finite and (countably) infinite sequences of such functions.

Throughout, ``integers'' and ``numbers'' will always refer to elements of $\mathbb{N} = \{0,1,2,\ldots\}$. We identify a number $n \geq 1$ with $\{0,\ldots,n-1\}$, so that $j < n$ and $j \in n$ may be used interchangeably. Our focus will center on the following family of problems.

\begin{defn}
For each pair of integers $m > n \geq 2$, the problem $\wphp{}{m}{n}$ has instances all functions $f: m \to n$, and each instance $f$ has solutions all $\{i,j\} \in m^2$ such that $i \neq j$ and $f(i) = f(j)$.
\end{defn}

\noindent Let us call a problem $\mathsf{P}$ \emph{finite} if $\operatorname{dom}(\mathsf{P})$ is a finite subset of $\mathbb{N}$, and $\mathsf{P}(x)$ is a finite subset of $\mathbb{N}$ for each $x \in \operatorname{dom}(\mathsf{P})$. Per our discussion above, each of the problems $\wphp{}{m}{n}$ has a natural representation as a finite problem, and we shall always assume this representation moving forward.

We now define a reducibility notion which, when restricted to finite problems, exactly agrees with so-called \emph{strong} Weihrauch reducibility; we discuss this further in Section \ref{sec:limprobs}. The advantage of using a separate definition here is that we can forego, for now, needing to explicitly mention computability theory.

\begin{defn}\label{defn:gen_reductions}
Let $\mathsf{P}$ and $\mathsf{Q}$ be finite problems.
\begin{enumerate}
	\item $\mathsf{P}$ is \emph{reducible} to $\mathsf{Q}$, written $\mathsf{P} \gred \mathsf{Q}$, if there exist functions $\Phi,\Psi$ with domain and co-domain (possibly proper subsets of) $\N$ such that $\Phi(x) \in \operatorname{dom}(\mathsf{Q})$ whenever $x \in \operatorname{dom}(\mathsf{P})$, and $\Psi({y}) \in \mathsf{P}(X)$ whenever $y \in \mathsf{Q}(\Phi(x))$.
	\item If $\mfP \gred \mfQ$ and $\mfQ \gred \mfP$,
we say that $\mfP$ and $\mfQ$ are \emph{equivalent}, and write $\mfP \gequiv \mfQ$.
\end{enumerate}
\end{defn}
\noindent It is clear that $\gred$ forms a preorder (i.e., it is reflexive and transitive).

The key part of the definition is that the ``backward'' function, $\Psi$, is not allowed to make use of the starting $\mathsf{P}$-instance, $x$. We have the following basic results.

\begin{prop} \label{prop:m->n_basic_rshp}
If $m' \geq m > n \geq n' \geq 2$, then $\wphp{}{m'}{n'} \gred \wphp{}{m}{n}$.
\end{prop}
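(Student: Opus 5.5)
We will give an explicit forward map $\Phi$ and backward map $\Psi$, with $\Psi$ independent of the instance. Given an instance $f : m' \to n'$ of $\wphp{}{m'}{n'}$, let $\Phi(f) = g$, where $g : m \to n$ is the restriction $g = f \upharpoonright m$. Since $m \leq m'$, $g$ is defined on all of $m$. Since $f$ takes values in $n' \leq n$, $g$ takes values in $n$. Thus $g$ is a valid instance of $\wphp{}{m}{n}$.

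For the backward map we take $\Psi$ to be the identity (restricted to codes of unordered pairs, and arbitrary elsewhere). Suppose $\{i,j\}$ is a solution to $g$. Then $i \neq j$, $i,j < m \leq m'$, and $g(i) = g(j)$. Since $g$ agrees with $f$ on $m$, we get $f(i) = f(j)$. Hence $\{i,j\}$ is a solution to $f$ as an instance of $\wphp{}{m'}{n'}$.

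The only point needing attention is that $\Psi$ must not consult $f$. This holds because solutions are pairs of indices, and every index below $m$ is also an index below $m'$, so no translation is needed. The hypotheses $m > n$ and $m' > n'$ ensure that both problems are defined and that every instance has at least one solution. No other obstacle arises.
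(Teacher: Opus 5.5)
Your proposal is correct and matches the paper's proof: restrict $f$ to $m$, regard the restriction as a function into $n$ (which is possible because $n' \leq n$), and take $\Psi$ to be the identity, since every solution of the restricted function is already a solution of $f$.
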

\begin{proof}
Given $f: m' \to n'$, restrict it to $m$ and regard it as a function from $m$ to $n$. Each solution to $f\restriction m: m \to n$ is also a solution to $f: m' \to n'$. More formally, let $\Phi : \N \to \N$ be the function that, given as input (a code for) a function $f : m' \to n'$, outputs (a code for) $f \restriction m$ (and is, for example, undefined on other inputs). Then $\Phi$ takes instances of $\wphp{}{m'}{n'}$ to instances of $\wphp{}{m}{n}$. Let $\Psi : \N \to \N$ be the identity function. Then $\Phi$ and $\Psi$ witness that $\wphp{}{m'}{n'} \leq \wphp{}{m}{n}$, as was to be shown.
\end{proof}

When proving that a reduction holds, we will typically forego the formal definition of $\Phi$ and $\Psi$ as in the second part of the above proof, and instead directly convert between instances and solutions, as in the first part. When proving a \emph{separation}, i.e., that a reduction does not hold, we will usually need to work with $\Phi$ and $\Psi$ explicitly. In such cases, given an instance $f : m \to n$ of $\wphp{}{m}{n}$, or a solution $\{i,j\}$ to some such $f$, we will write $\Phi(f)$ and $\Psi(\{i,j\})$ for simplicity, even though formally $\Phi$ and $\Psi$ are being applied to the numerical codes of these objects.

We now show that the definition of $\gred$ is non-trivial. Namely, for fixed $n \geq 2$, we show that the hierarchy
\[ \dots \leq \wphp{}{m+1}{n} \leq \wphp{}{m}{n} \leq \dots \leq \wphp{}{n+1}{n} \]
is infinite:

\begin{prop} \label{prop:n^l+1->n_strictly_below_n^l->n}
For all $n,\ell \geq 2$, $\wphp{}{n^\ell+1}{n} < \wphp{}{n^\ell}{n}$.
\end{prop}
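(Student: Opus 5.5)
Since $n^\ell+1 > n^\ell > n$, Proposition~\ref{prop:m->n_basic_rshp} already gives $\wphp{}{n^\ell+1}{n} \gred \wphp{}{n^\ell}{n}$. So the work is the separation: I will show that there are no $\Phi,\Psi$ witnessing $\wphp{}{n^\ell}{n} \gred \wphp{}{n^\ell+1}{n}$. The argument exploits the fact that $\Psi$ sees only the solution $\{i,j\}$ and not the original instance. The plan is to find several instances of $\wphp{}{n^\ell}{n}$ whose images under $\Phi$ share a common solution, while the original instances have no common solution.

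Suppose for contradiction that $\Phi,\Psi$ witness the reduction. Identify $n^\ell$ with the set $n^{\ell}$ of strings $\sigma = (\sigma(0),\ldots,\sigma(\ell-1))$ of length $\ell$ over $n$, via any fixed bijection. For each $k<\ell$, let $f_k : n^\ell \to n$ be the $k$-th coordinate projection, $f_k(\sigma) = \sigma(k)$. Then $\{\sigma,\tau\}$ with $\sigma\neq\tau$ is a solution to $f_k$ exactly when $\sigma(k)=\tau(k)$. Hence no pair of distinct elements is a solution to every $f_k$ at once, since such $\sigma,\tau$ would agree at all coordinates.

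Next, let $g_k = \Phi(f_k)$, which is a function $n^\ell+1 \to n$. Define $G : n^\ell+1 \to n^\ell$ by $G(x) = (g_0(x),\ldots,g_{\ell-1}(x))$, viewed as a string in $n^{\ell}$. By the ordinary pigeonhole principle, there are $i\neq j$ with $G(i)=G(j)$. Then $\{i,j\}$ is a solution to $g_k$ for every $k<\ell$ simultaneously.

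Since $\Psi$ does not depend on the starting instance, the single value $\Psi(\{i,j\})$ must be a solution to $f_k$ for every $k<\ell$. That is, it must be a pair of distinct strings agreeing in every coordinate, which is impossible. This contradiction gives the strictness.

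The only delicate step is choosing the $\ell$ instances so that their solution sets have empty common intersection, while $\ell$ functions into $n$ can be packed into a single function into $n^\ell$. The coordinate projections do both, which is exactly why the threshold is $n^\ell$ versus $n^\ell+1$.
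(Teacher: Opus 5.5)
Your proposal is correct and follows the paper's proof. The paper uses the same two ingredients: the $\ell$ digit-projection functions on $n^\ell$, which have no common solution, and pigeonhole applied to $x \mapsto (\Phi(f_1)(x),\dots,\Phi(f_\ell)(x))$ on $n^\ell+1$. Your explicit appeal to Proposition~\ref{prop:m->n_basic_rshp} for the non-strict direction just spells out what the paper leaves implicit.
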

\begin{proof}
There are two key facts underlying the separation here.
First, there are $\ell$ many functions $f_1,\dots,f_\ell: n^\ell \to n$ such that no pair $i \neq j$ is a solution to all of $f_1,\dots,f_\ell$. (For example, let $f_k(x)$ be the $k$th digit in the $n$-ary expansion of $x$.)

Second, given $\ell$ many functions $g_1,\dots,g_\ell: n^\ell+1 \to n$, there is some pair $i \neq j$ which is a solution to all of $g_1,\dots,g_\ell$. This follows by applying the pigeonhole principle to the function $i \mapsto (g_1(i),\dots,g_\ell(i))$.

Consider now functions $\Phi$ and $\Psi$ which purport to witness that $\wphp{}{n^\ell}{n} \leq \wphp{}{n^\ell+1}{n}$ as in Definition \ref{defn:gen_reductions}. Fix a common solution $i \neq j$ to the functions $\Phi(f_1),\dots,\Phi(f_\ell)$, where $f_1,\ldots,f_\ell$ are as above. There is then some $k$ such that the pair $\Psi(\{i,j\})$ is not a solution to $f_k$. But then $\{i,j\}$ is a solution to $\Phi(f_k)$ even though $\Psi(\{i,j\})$ is not a solution to $f_k$, a contradiction.
\end{proof}

We will prove (Corollary \ref{cor:m->n_not_below_m'->n'_for_n'<n}) that for fixed $m \geq 3$, the hierarchy $\wphp{}{m}{2} \leq \wphp{}{m}{3} \leq \dots \leq \wphp{}{m}{m-1}$ is strict. In general one would like to know all values of $m$, $n$, $m'$ and $n'$ such that $\wphp{}{m}{n} \leq \wphp{}{m'}{n'}$. We have the following elementary positive result.

\begin{prop} \label{prop:m->n_le_mq+r->nq+r}
For each $q \geq 1$ and all $r,n < m$, $\wphp{}{m}{n} \leq \wphp{}{mq+r}{nq+r}$.
\end{prop}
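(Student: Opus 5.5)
The plan is to give a direct reduction: blow up the instance $f$ into $q$ disjoint copies, and pad both domain and codomain with $r$ fresh points that are mapped injectively. This way no solution can involve the padding, and every solution lives inside a single copy of $f$. Note that $n<m$ gives $nq+r < mq+r$, so $\wphp{}{mq+r}{nq+r}$ is a legitimate problem.

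Concretely, identify $mq+r$ with the disjoint union of $m\times q$ (coded, say, by $(x,k)\mapsto kq'$-style blocks; fix the coding $(x,k)\mapsto km+x$ for $x<m$, $k<q$) and the $r$ extra points $mq,\dots,mq+r-1$. Identify $nq+r$ likewise, via $(y,k)\mapsto kn+y$ for $y<n$, $k<q$, together with $nq,\dots,nq+r-1$. Given an instance $f:m\to n$, let $\Phi(f)=g$, where
\[ g(km+x)=kn+f(x) \quad (x<m,\ k<q), \qquad g(mq+t)=nq+t \quad (t<r). \]
This is clearly a function $mq+r\to nq+r$. The map $\Psi$ will not depend on $f$: given a pair $\{i,j\}$ with $i,j<mq$, it outputs $\{i \bmod m,\ j \bmod m\}$, and on other inputs it outputs anything (e.g.\ $\{0,1\}$).

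The verification is the main point, though I do not expect a real obstacle here. Suppose $\{i,j\}$ is a solution to $g$, i.e.\ $i\neq j$ and $g(i)=g(j)$. First, neither $i$ nor $j$ can be an extra point. The values $nq+t$ are attained only at $mq+t$, and injectively so, while all other points are sent below $nq$. Hence $i=km+x$ and $j=k'm+x'$ for some $x,x'<m$ and $k,k'<q$. From $kn+f(x)=k'n+f(x')$ with $f(x),f(x')<n$ we get $k=k'$ and $f(x)=f(x')$. Since $i\neq j$ and $k=k'$, it follows that $x\neq x'$. So $\Psi(\{i,j\})=\{x,x'\}$ is a solution to $f$, as required.

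The only step requiring care is making sure that $\Psi$ uses only the pair $\{i,j\}$ and not $f$. This is handled by using the fixed block coding above, so that recovering $x$ from $i$ is just reduction mod $m$. One should also remark that when $r=0$ the padding is simply absent, and when $q=1$ the construction reduces to padding alone; in both cases the argument goes through unchanged.
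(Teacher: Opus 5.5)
Your proof is correct and is the paper's argument: your $g$ coincides with the paper's $g(x)=\lfloor x/m\rfloor n + f(x \bmod m)$ for $x<mq$ and $g(x)=x-mq+nq$ otherwise, and your $\Psi$ (reduction mod $m$) is the backward map implicit in the paper's verification. The stray phrase about ``$kq'$-style blocks'' should simply be deleted, since you immediately fix the coding $(x,k)\mapsto km+x$.
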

\begin{proof}
Given $f: m \to n$,
consider $g: mq+r \to nq+r$ defined by
\[
g(x) = 
\begin{cases}
	\lfloor x/m \rfloor n + f(x \text{ mod }m) & \text{if } x < mq,\\
	x-mq+nq & \text{otherwise.}
\end{cases}
\]
If $g(i) = g(j)$ for some $i \neq j$ then we must have
$i, j < mq$, $\lfloor i/m \rfloor = \lfloor j/m \rfloor$, and $f(i \text{ mod }m) = f(j \text{ mod }m)$.
\end{proof}

Many other positive results can be derived from results in the next section;
see Figure \ref{fig:m->n}.
We do not have a conjecture for a complete answer. As we discuss in the next section, any such conjecture would necessarily encompass longstanding open problems in combinatorics.

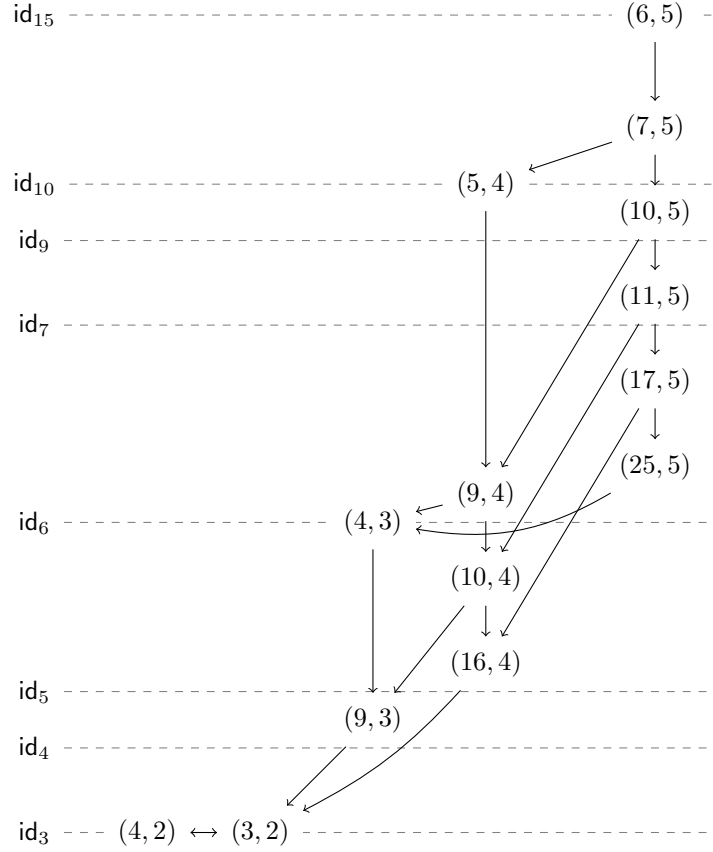
\begin{figure}[ptb]
\centering
\begin{tikzpicture}[scale=1.5,every node/.style={inner sep=5pt,fill=white}]
	\node (id3) at (-2,-0.5) {$\id_3$};
	\draw [dashed,color=gray] (id3) -- (4,-0.5);
	\node (id4) at (-2,0.25) {$\id_4$};
	\draw [dashed,color=gray] (id4) -- (4,0.25);
	\node (id5) at (-2,0.75) {$\id_5$};
	\draw [dashed,color=gray] (id5) -- (4,0.75);
	\node (id6) at (-2,2.25) {$\id_6$};
	\draw [dashed,color=gray] (id6) -- (4,2.25);
	\node (id7) at (-2,4) {$\id_7$};
	\draw [dashed,color=gray] (id7) -- (4,4);
	\node (id9) at (-2,4.75) {$\id_9$};
	\draw [dashed,color=gray] (id9) -- (4,4.75);
	\node (id10) at (-2,5.25) {$\id_{10}$};
	\draw [dashed,color=gray] (id10) -- (4,5.25);
	\node (id15) at (-2,6.75) {$\id_{15}$};
	\draw [dashed,color=gray] (id15) -- (4,6.75);
	\node (32) at (0,-0.5) {$(3,2)$};
	\node (42) at (-1,-0.5) {$(4,2)$};
	\node (43) at (1,2.25) {$(4,3)$};
	\node (93) at (1,0.5) {$(9,3)$};
	\node (54) at (2,5.25) {$(5,4)$};
	\node (94) at (2,2.5) {$(9,4)$};
	\node (104) at (2,1.75) {$(10,4)$};
	\node (164) at (2,1) {$(16,4)$};
	\node (65) at (3.5,6.75) {$(6,5)$};
	\node (75) at (3.5,5.75) {$(7,5)$};
	\node (105) at (3.5,5) {$(10,5)$};
	\node (115) at (3.5,4.25) {$(11,5)$};	
	\node (175) at (3.5,3.5) {$(17,5)$};	
	\node (255) at (3.5,2.75) {$(25,5)$};
	\draw [<->] (32) -- (42);
	\draw [->] (93) -- (32);
	\draw [->] (43) -- (93);
	\draw [->] (54) -- (94);	
	\draw [->] (94) -- (43);
	\draw [->] (94) -- (104);
	\draw [->] (115) -- (104);
	\draw [->] (104) -- (93);
	\draw [->] (104) -- (164);
	\draw [->] (175) -- (164);
	\draw [->] (164) to [bend left=10] (32);
	\draw [->] (65) -- (75);
	\draw [->] (75) -- (105);
	\draw [->] (105) -- (94);	
	\draw [->] (105) -- (115);	
	\draw [->] (115) -- (175);
	\draw [->] (175) -- (255);
	\draw [->] (255) to [bend left=20] (43);
	\draw [->] (75) -- (54);
\end{tikzpicture}
\caption{For $n \leq 5$ and selected $m \leq n^2$, we draw $(m',n') \rightarrow (m,n)$ if $\wphp{}{m}{n} \leq \wphp{}{m'}{n'}$; these follow from Propositions \ref{prop:m->n_basic_rshp}, 
\ref{prop:m->n_le_mq+r->nq+r}, 
\ref{prop:n+1_eq_n+1_choose_2}, 
\ref{prop:3n_optimal},
\ref{prop:id_below_m->n_graph_decomp}
and \ref{prop:6_leq_9->4,10_leq_7->5}.
A node $(m,n)$ lies above (or along) the dotted line passing through $\id_k$ if and only if $\id_k \leq \wphp{}{m}{n}$;
the nonreductions here follow from Lemma \ref{lem:id_k_not_below_m->n_counting_bound}.
}
\label{fig:m->n}
\end{figure}

\section{Relationships between $\wphp{}{m}{n}$ and $\id_k$} \label{sec:m->n_id_k}

In this section we demonstrate a surprising and deep connection between the problems $\wphp{}{m}{n}$ and certain well-known problems in finite combinatorics. This is expressed using the following identity problem on $k$ elements.

\begin{defn}
For each $k \geq 1$, the problem $\id_k$ has instances $j \in \{1,\dots,k\}$, where each instance $j$ has solution set $\{j\}$ (i.e., it has the unique solution $j$).
\end{defn}

\noindent Although $\id_k$ may appear to be a completely trivial problem, it is not so in our context. To see this, suppose for instance that $\id_k \gred \mathsf{P}$, say via $\Phi$ and $\Psi$ as in Definition \ref{defn:gen_reductions}. Then every solution to the $\mathsf{P}$-instance $\Phi(j)$ must directly encode the number $j$. This is because, as noted earlier, $\Psi$ does not have access to what $j$ is, and so can only rely on queries to the $\mathsf{P}$-solution. In this way we gain insight about the coding power of the problem $\mathsf{P}$, particularly if we can show that $\id_k \gred \mathsf{P}$ for some $k$ but not all. As we show below, characterizing what $k$, $m$, and $n$ satisfy $\id_k \gred \wphp{}{m}{n}$ actually turns out to be a difficult problem.

By the pigeonhole principle, $\id_k < \id_\ell$ whenever $k < \ell$. Our starting motivation for considering $\id_k$ stems from the following equivalence.

\begin{prop} \label{prop:n+1_eq_n+1_choose_2}
For each $n \geq 2$, we have $\wphp{}{n+1}{n} \equiv \id_{\binom{n+1}{2}}$.
\end{prop}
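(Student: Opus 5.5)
We prove the two reductions separately.

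For $\id_{\binom{n+1}{2}} \gred \wphp{}{n+1}{n}$, index the instances of $\id_{\binom{n+1}{2}}$ by the unordered pairs $\{a,b\}$ with $a \neq b$ in $n+1$, using a fixed bijection. Given the pair $\{a,b\}$, let $\Phi$ output a function $f_{\{a,b\}} : n+1 \to n$ that identifies $a$ and $b$ and is injective otherwise. Concretely, send $a$ and $b$ to a common value and send the remaining $n-1$ points bijectively onto the remaining $n-1$ values.

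Then $\{a,b\}$ is the unique solution to $f_{\{a,b\}}$. So $\Psi$ can simply return the index of the pair it receives. This direction is routine.

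For $\wphp{}{n+1}{n} \gred \id_{\binom{n+1}{2}}$, the task is to assign to each $f : n+1 \to n$ a label $\Phi(f) \in \{1,\dots,\binom{n+1}{2}\}$, together with a decoding $\Psi$ from labels to pairs, so that $\Psi(\Phi(f))$ is a solution to $f$. Since $\id$ has a unique solution, this amounts to a choice function: each $f$ must be assigned one of its collision pairs, and there are only $\binom{n+1}{2}$ possible outputs anyway. So we take $\Psi$ to be the inverse of the bijection between labels and pairs. We then let $\Phi(f)$ be the label of some pair $\{i,j\}$ with $i \neq j$ and $f(i)=f(j)$, for instance the lexicographically least such pair. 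Such a pair exists by the pigeonhole principle, since $n+1 > n$.

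The only point needing care is that the finite-problem representation is respected: $\Phi$ must map codes of functions to codes of instances of $\id_{\binom{n+1}{2}}$, and $\Psi$ must map solutions back to codes of pairs. Both are fixed finite tables, so this is immediate.

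I expect no genuine obstacle. The mildly substantive observation is the uniqueness of the solution in the first direction, which requires the counting check that $n+1$ points can be mapped to $n$ values with exactly one collision. Together, the two directions give the equivalence $\wphp{}{n+1}{n} \equiv \id_{\binom{n+1}{2}}$.
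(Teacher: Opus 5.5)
Your proof is correct and matches the paper's argument in both directions: you take the lexicographically least collision pair for $\wphp{}{n+1}{n} \leq \id_{\binom{n+1}{2}}$, and for the converse you code each pair $\{a,b\}$ by a function whose unique solution is $\{a,b\}$. Your coding function, which sends the other $n-1$ points bijectively onto the other $n-1$ values, is slightly more careful than the paper's literal formula $f(k)=k$: that formula produces the out-of-range value $n$ at $k=n$ whenever $j<n$.
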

\begin{proof}
To prove $\wphp{}{n+1}{n} \leq \id_{\binom{n+1}{2}}$, given $f: n+1 \to n$, we may output the lexicographically least $i < j$ such that $f(i) = f(j)$.

Conversely, view each number in $\binom{n+1}{2}$ as a pair $i < j$. Map it to the function $f: n+1 \to n$ such that $f(i) = f(j) = i$ and $f(k) = k$ otherwise. We can recover $i < j$ from the unique solution $\{i,j\}$ to $f$.
\end{proof}

This suggests we investigate the relationship between $\wphp{}{m}{n}$ and $\id_k$ for general $m > n \geq 2$ and $k \geq 1$. The question of when $\wphp{}{m}{n} \leq \id_k$ is easily settled:

\begin{prop} \label{prop:m->n_not_below_id_n+1_choose_2-1}
$\wphp{}{m}{n} \not\leq \id_{\binom{n+1}{2}-1}$. So $\wphp{}{m}{n} \leq \id_k$ if and only if $k \geq \binom{n+1}{2}$.
\end{prop}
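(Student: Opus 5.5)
The plan is to translate a reduction $\wphp{}{m}{n} \leq \id_k$ into a statement about graphs and then apply a standard lower bound on the number of edges of a graph with large chromatic number.

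Suppose $\Phi,\Psi$ witness $\wphp{}{m}{n} \leq \id_k$. For each $j \in \{1,\dots,k\}$ that lies in the range of $\Phi$, the problem $\id_k$ has the single solution $j$ to instance $j$. Hence $p_j := \Psi(j)$ must be a solution to every $f$ with $\Phi(f)=j$. Each such $p_j$ is therefore a pair $\{a,b\} \subseteq m$ with $a \neq b$. Since every $f : m \to n$ is sent by $\Phi$ to some $j$, we get:
\begin{quote}
every function $f : m \to n$ identifies both endpoints of at least one of the pairs $p_j$.
\end{quote}
Let $G$ be the graph on vertex set $m$ whose edges are the pairs $p_j$. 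Then $G$ has at most $k$ edges. The displayed property says exactly that no map $m \to n$ is a proper colouring of $G$, that is, $\chi(G) \geq n+1$.

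The key step is the classical bound that a graph with $\chi(G) \geq n+1$ has at least $\binom{n+1}{2}$ edges. To prove it, pass to a subgraph $H$ with $\chi(H) = n+1$ exactly. One way is to delete edges one at a time; each deletion lowers the chromatic number by at most one. Take a proper colouring $c$ of $H$ with $n+1$ colours $0,\dots,n$. For any two colours $s \neq t$, there must be an edge of $H$ between the classes $c^{-1}(s)$ and $c^{-1}(t)$. Otherwise recolouring $c^{-1}(t)$ with colour $s$ would give a proper $n$-colouring, a contradiction. Distinct pairs of colours give distinct edges, so $H$, and hence $G$, has at least $\binom{n+1}{2}$ edges. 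It follows that $k \geq \binom{n+1}{2}$, which proves $\wphp{}{m}{n} \not\leq \id_{\binom{n+1}{2}-1}$.

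For the "so" part, the backward direction of the equivalence goes through two earlier results. Proposition~\ref{prop:m->n_basic_rshp} gives $\wphp{}{m}{n} \leq \wphp{}{n+1}{n}$, and Proposition~\ref{prop:n+1_eq_n+1_choose_2} gives $\wphp{}{n+1}{n} \leq \id_{\binom{n+1}{2}}$. We also have $\id_{\binom{n+1}{2}} \leq \id_k$ for $k \geq \binom{n+1}{2}$, via the inclusion map with identity $\Psi$. Chaining these by transitivity gives $\wphp{}{m}{n} \leq \id_k$. The forward direction follows from the first part together with $\id_k \leq \id_{\binom{n+1}{2}-1}$ whenever $k \leq \binom{n+1}{2}-1$. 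The only step needing real thought is the reformulation as a chromatic-number statement; after that the edge-counting argument is routine.
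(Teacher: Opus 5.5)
Your proposal is correct and follows the paper's argument. The paper also takes the outputs of $\Psi$ as the edges of a graph on $m$ vertices. It then notes that fewer than $\binom{n+1}{2}$ edges forces an $n$-colouring that defeats the reduction, and handles the ``so'' part via Propositions~\ref{prop:m->n_basic_rshp} and~\ref{prop:n+1_eq_n+1_choose_2}. The only difference is that you actually prove the classical fact that an $(n+1)$-chromatic graph has at least $\binom{n+1}{2}$ edges, which the paper simply asserts.
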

\begin{proof}
Suppose towards a contradiction that $\Phi$ and $\Psi$ witness $\wphp{}{m}{n} \leq \id_{\binom{n+1}{2}-1}$. That means that for all $f: m \to n$, $\Psi(\Phi(f))$ is a solution to $f$. The range of $\Psi$ (restricted to all possible solutions to instances of $\id_{\binom{n+1}{2}-1}$) contains at most $\binom{n+1}{2}-1$ many pairs $\{i,j\}$, where $i < j < m$. View these pairs as the edges of a graph with $m$ vertices. Since there are at most $\binom{n+1}{2}-1$ many edges, this graph is $n$-colorable, say by a coloring $f: m \to n$. Then $\Psi(\Phi(f))$ is not a solution to $f$, contradiction.

The second statement in the proposition then follows from Propositions \ref{prop:m->n_basic_rshp} and \ref{prop:n+1_eq_n+1_choose_2}.
\end{proof}

In particular, we now know that $\wphp{}{m}{n}$ cannot be equivalent to $\id_k$ unless $k = \binom{n+1}{2}$. We will see (Corollary \ref{cor:no_equiv_id_except_4->2}) that the only problems $\wphp{}{m}{n}$ which are equivalent to some $\id_k$ are the problems $\wphp{}{n+1}{n}$ and $\wphp{}{4}{2}$.

\begin{cor} \label{cor:m->n_not_below_m'->n'_for_n'<n}
Whenever $m,m' > n > n' \geq 2$, we have $\wphp{}{m}{n} \not\leq \wphp{}{m'}{n'}$. Therefore $\wphp{}{m}{2} < \wphp{}{m}{3} < \dots < \wphp{}{m}{m-1}$.
\end{cor}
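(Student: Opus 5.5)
The plan is to argue by contradiction, using $\id_{\binom{n+1}{2}}$ as an intermediate problem and the transitivity of $\gred$. Suppose $\wphp{}{m}{n} \gred \wphp{}{m'}{n'}$ with $m,m' > n > n' \geq 2$. By Proposition \ref{prop:m->n_basic_rshp} we have $\wphp{}{m'}{n'} \gred \wphp{}{n'+1}{n'}$, since $m' \geq n'+1 > n' \geq n'$. By Proposition \ref{prop:n+1_eq_n+1_choose_2}, $\wphp{}{n'+1}{n'} \gred \id_{\binom{n'+1}{2}}$. Chaining these gives $\wphp{}{m}{n} \gred \id_{\binom{n'+1}{2}}$.

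Since $n' \leq n-1$, we have $\binom{n'+1}{2} \leq \binom{n}{2} < \binom{n+1}{2}$, so $\binom{n'+1}{2} \leq \binom{n+1}{2}-1$. We also need the elementary fact $\id_k \gred \id_\ell$ whenever $k \leq \ell$, witnessed by the inclusion and the identity. Hence $\wphp{}{m}{n} \gred \id_{\binom{n+1}{2}-1}$. This contradicts Proposition \ref{prop:m->n_not_below_id_n+1_choose_2-1}, which proves the first statement.

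For the chain $\wphp{}{m}{2} < \wphp{}{m}{3} < \dots < \wphp{}{m}{m-1}$, fix $2 \leq n' < n \leq m-1$. Proposition \ref{prop:m->n_basic_rshp}, applied with $m'=m$, gives $\wphp{}{m}{n'} \gred \wphp{}{m}{n}$. The first part, applied with $m'=m > n > n'$, gives $\wphp{}{m}{n} \not\gred \wphp{}{m}{n'}$. So each inequality in the chain is strict.

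There is no real obstacle here. The only points needing care are checking the hypothesis $m' \geq n'+1$ for Proposition \ref{prop:m->n_basic_rshp}, the binomial inequality, and the monotonicity of $\id_k$ in $k$.
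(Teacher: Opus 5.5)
Your proposal is correct and follows essentially the same route as the paper: bound $\wphp{}{m'}{n'}$ above by $\id_{\binom{n'+1}{2}} \gred \id_{\binom{n+1}{2}-1}$ via Propositions \ref{prop:m->n_basic_rshp} and \ref{prop:n+1_eq_n+1_choose_2}, then invoke Proposition \ref{prop:m->n_not_below_id_n+1_choose_2-1}. You also spell out the strict chain (reducibility from Proposition \ref{prop:m->n_basic_rshp} with $m'=m$, non-reducibility from the first part), which the paper leaves implicit.
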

\begin{proof}
By Propositions \ref{prop:m->n_basic_rshp} and \ref{prop:n+1_eq_n+1_choose_2}, $\wphp{}{m'}{n'} \leq \id_{\binom{n'+1}{2}}$. Since $n' < n$, we have $\binom{n'+1}{2} \leq \binom{n+1}{2}-1$ so $\id_{\binom{n'+1}{2}} \leq \id_{\binom{n+1}{2}-1}$. Therefore $\wphp{}{m'}{n'} \leq \id_{\binom{n+1}{2}-1}$, which implies $\wphp{}{m}{n} \not\leq \wphp{}{m'}{n'}$ (Proposition \ref{prop:m->n_not_below_id_n+1_choose_2-1}).\end{proof}

On the other hand, the question of when $\id_k \leq \wphp{}{m}{n}$ is much more difficult (Theorems \ref{thm:latin_sq}, \ref{thm:affine_plane}, \ref{thm:RBIBD}). We begin with a combinatorial characterization:

\begin{lem} \label{lem:id_k_le_m_iff_k_functions_no_common_soln}
The following are equivalent:
\begin{enumerate}
	\item there are $k$ many functions $f_1,\dots,f_k: m \to n$ such that if $\ell \neq \ell'$, then $f_\ell$ and $f_{\ell'}$ have no common solution;
	\item $\id_k \leq \wphp{}{m}{n}$.
\end{enumerate}
\end{lem}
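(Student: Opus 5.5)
We prove the two directions separately, working directly from Definition~\ref{defn:gen_reductions} and the fact that $\id_k$-instances have unique solutions.

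For (1)$\Rightarrow$(2), fix $f_1,\dots,f_k : m \to n$ with pairwise no common solution. Let $\Phi(\ell) = f_\ell$ for each instance $\ell \in \{1,\dots,k\}$ of $\id_k$. To define $\Psi$, consider any pair $\{i,j\}$ that is a solution to some $f_\ell$. By hypothesis, $\{i,j\}$ solves at most one of the $f_\ell$, so $\ell$ is uniquely determined by $\{i,j\}$. We set $\Psi(\{i,j\}) = \ell$, and leave $\Psi$ undefined or arbitrary on all other inputs. Then whenever $\{i,j\} \in \wphp{}{m}{n}(\Phi(\ell))$, we have $\Psi(\{i,j\}) = \ell$, which is the unique $\id_k$-solution to $\ell$. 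The key point is that $\Psi$ is well defined without access to $\ell$, and that is exactly what the "no common solution" hypothesis guarantees.

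For (2)$\Rightarrow$(1), suppose $\Phi,\Psi$ witness $\id_k \leq \wphp{}{m}{n}$, and set $f_\ell = \Phi(\ell)$ for $\ell = 1,\dots,k$; these are functions $m \to n$. Suppose $\ell \neq \ell'$ and $\{i,j\}$ is a common solution of $f_\ell$ and $f_{\ell'}$. Then $\Psi(\{i,j\})$ must be a solution to the $\id_k$-instance $\ell$, so it equals $\ell$. By the same reasoning it also equals $\ell'$, which is a contradiction. So the $f_\ell$ pairwise have no common solution.

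One minor point to check is that the $f_\ell$ in (1) are $k$ functions indexed by $\ell$ and need not be distinct a priori. However, two equal functions would share a solution, since every $f : m \to n$ has one by the pigeonhole principle because $m > n$. So the functions in (1) are automatically distinct, and the same holds for the $\Phi(\ell)$ in the converse. No step here presents a real obstacle; the argument is a direct unwinding of the definitions.
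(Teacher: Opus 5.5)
Your proof is correct and follows the paper's argument essentially verbatim: $\Phi(\ell) = f_\ell$ with $\Psi$ returning the unique $\ell$ whose $f_\ell$ is solved by $\{i,j\}$, and conversely any common solution of $\Phi(\ell),\Phi(\ell')$ forces $\Psi(\{i,j\]\})$ to equal both $\ell$ and $\ell'$. Your remark that the $f_\ell$ are automatically distinct, since every $f : m \to n$ has a solution when $m > n$, is a correct extra detail that the paper does not spell out.
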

\begin{proof}
If $f_1,\dots,f_k: m \to n$ are as in (1), we can define $\Phi(\ell) = f_\ell$ for each $\ell \in \{1,\dots,k\}$. Given an $\wphp{}{m}{n}$-solution $\{i,j\}$ to some $\Phi(\ell)$, we define $\Psi(\{i,j\})$ to be the unique $\ell$ such that $\{i,j\}$ is a solution to $f_\ell$. (Note that $\Psi$ need not be total.) Conversely, if $\Phi$ and $\Psi$ witness that $\id_k \leq \wphp{}{m}{n}$, then $\Phi(1),\dots,\Phi(k)$ are functions $m \to n$. Furthermore, if $\Phi(\ell)$ and $\Phi(\ell')$ have a common $\wphp{}{m}{n}$-solution $\{i,j\}$, then $\Psi(\{i,j\})$ must be a common $\id_k$-solution to $\ell$ and $\ell'$. This implies $\ell = \ell'$.
\end{proof}

There are two basic reasons why Lemma \ref{lem:id_k_le_m_iff_k_functions_no_common_soln}(2) (and hence (1)) can fail. The first is when $m$ is large relative to $n$, specifically when $m > n^2$. This forces any $f: m \to n$ to have some large fiber, specifically of size $> n$.

\begin{prop} \label{prop:id_2_not_below_n^2+1}
For each $k \geq 2$, we have $\id_k \not\leq \wphp{}{n^2+1}{n}$. More generally, if $n,k \geq 2$ and $m > n^2$, then $\id_k \not\leq \wphp{}{m}{n}$.
\end{prop}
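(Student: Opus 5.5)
By Lemma \ref{lem:id_k_le_m_iff_k_functions_no_common_soln}, it suffices to show that any two functions $f, g : m \to n$ with $m > n^2$ have a common solution, i.e., a pair $i \neq j$ with $f(i) = f(j)$ and $g(i) = g(j)$. Once this holds, no family of even two functions can be pairwise without common solutions. Hence $\id_2 \not\leq \wphp{}{m}{n}$, and since $\id_2 \leq \id_k$ for $k \geq 2$, transitivity gives $\id_k \not\leq \wphp{}{m}{n}$. The first sentence of the proposition is the special case $m = n^2+1$.

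For the key step, I would reuse the argument from the proof of Proposition \ref{prop:n^l+1->n_strictly_below_n^l->n} with $\ell = 2$. Consider the map $i \mapsto (f(i), g(i))$ from $m$ to $n \times n$. Since $m > n^2 = |n \times n|$, the pigeonhole principle yields $i \neq j$ with $(f(i),g(i)) = (f(j),g(j))$. This $\{i,j\}$ is then a common solution to $f$ and $g$.

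Alternatively, one could argue directly with $\Phi,\Psi$, as in the separation proofs above. Take a common solution $\{i,j\}$ to $\Phi(1)$ and $\Phi(2)$. Then $\Psi(\{i,j\})$ would have to equal both $1$ and $2$, which is a contradiction. I do not expect any real obstacle; the only care needed is to note that reducing to $k=2$ is legitimate via $\id_2 \leq \id_k$, or equivalently that a family of $k \geq 2$ functions contains two distinct members.
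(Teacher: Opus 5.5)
Your proposal is correct and follows essentially the same route as the paper: the paper also reduces to the fact that any two functions $m \to n$ with $m > n^2$ share a solution and then invokes Lemma~\ref{lem:id_k_le_m_iff_k_functions_no_common_soln}. You additionally spell out the pigeonhole argument on $i \mapsto (f(i),g(i))$ that the paper leaves implicit, and you make explicit the step from $k=2$ to general $k \geq 2$.
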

\begin{proof}
If $m > n^2$, then any two functions from $m$ to $n$ have some common solution, so the desired result follows from Lemma \ref{lem:id_k_le_m_iff_k_functions_no_common_soln}.
\end{proof}

The second is when $k$ is large, specifically when $k$ exceeds $\binom{m}{2}$ divided by the minimum number of solutions each $f: m \to n$ can have (calculated below).

\begin{lem} \label{lem:Turan_bound}
If $m = qn + r$ where $0 \leq r < n$,
then each function $f: m \to n$ has at least $q(m-n+r)/2$ many solutions.
Furthermore, if equality holds then $\lfloor m/n \rfloor \leq |f^{-1}(j)| \leq \lceil m/n \rceil$ for every $j < n$.
\end{lem}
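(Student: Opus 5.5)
The number of solutions of $f$ is $\sum_{j<n}\binom{a_j}{2}$, where $a_j=|f^{-1}(j)|$ and $\sum_j a_j=m$. So the plan is to minimize this sum of binomial coefficients over all such $(a_0,\dots,a_{n-1})$, which is the usual Tur\'an-type convexity argument. First I would compute the value at the balanced configuration: $r$ fibers of size $q+1$ and $n-r$ fibers of size $q$. Its value is
\[
r\binom{q+1}{2}+(n-r)\binom{q}{2}=\frac{q}{2}\bigl(r(q+1)+(n-r)(q-1)\bigr)=\frac{q}{2}\bigl(qn+2r-n\bigr)=\frac{q(m-n+r)}{2},
\]
using $qn=m-r$. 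This matches the claimed bound.

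Next I would show that any configuration which is not balanced can be strictly improved. Suppose some $a_i\ge a_j+2$. Replace $a_i,a_j$ by $a_i-1,a_j+1$. The sum then changes by
\[
\binom{a_i-1}{2}-\binom{a_i}{2}+\binom{a_j+1}{2}-\binom{a_j}{2}=-(a_i-1)+a_j<0.
\]
The sum takes nonnegative integer values, so repeating this move must eventually stop. It can only stop at a configuration in which all fibers differ by at most $1$. Such a configuration has every $a_j\in\{\lfloor m/n\rfloor,\lceil m/n\rceil\}$, and then the count of fibers of size $q+1$ is forced to be $r$. So the process ends exactly at the balanced value computed above. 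Since each move strictly decreased the sum, the original sum is at least $q(m-n+r)/2$, which proves the first claim.

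For the equality clause, I would use the strictness of the moves. If some pair of fibers differs by at least $2$, at least one strict move applies, so the sum is strictly greater than the balanced value. Hence equality forces $|a_i-a_j|\le1$ for all $i,j$. Together with $\sum_j a_j=m$, this gives $\lfloor m/n\rfloor\le a_j\le\lceil m/n\rceil$ for every $j$.

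There is no real obstacle here. The only step needing care is checking that the balanced value is uniquely determined, i.e.\ that any configuration with all parts in $\{q,q+1\}$ summing to $m$ has exactly $r$ parts equal to $q+1$. When $r=0$, all parts equal $q$. The formula $q(m-n)/2=n\binom{q}{2}$ also covers this case.
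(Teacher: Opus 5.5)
Your proof is correct, but it is not the route the paper actually writes out. The paper only remarks that convexity of binomial coefficients would work, and instead passes to the complement of the solution graph. The pairs $\{i,j\}$ with $f(i)\neq f(j)$ form a $K_{n+1}$-free graph on $m$ vertices, so Tur\'an's theorem bounds its edges by $\frac{(n-1)(m^2-r^2)}{2n}+\binom{r}{2}$. Subtracting this from $\binom{m}{2}$ gives $q(m-n+r)/2$, and the equality clause is read off from the uniqueness of the Tur\'an graph as the extremal graph.

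You instead carry out the convexity argument directly. You evaluate $\sum_j\binom{a_j}{2}$ at the balanced partition and use the smoothing move $(a_i,a_j)\mapsto(a_i-1,a_j+1)$ to show every unbalanced partition is strictly worse. The equality clause then falls out of the strictness of that move.

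Your version is self-contained and uses only what is needed. The complement graph here is always complete $n$-partite (its parts are the fibers of $f$). So the part of Tur\'an's theorem that reduces arbitrary $K_{n+1}$-free graphs to multipartite ones (symmetrization) is superfluous, and what remains is exactly your balancing step. The paper's version is shorter because it cites a known theorem, and it emphasizes the extremal-graph-theoretic connection the authors want to highlight. The cost is that it leaves to the reader the arithmetic from the Tur\'an number to $q(m-n+r)/2$ and the appeal to uniqueness of the extremal graph.
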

\begin{proof}
This result can be proved using convexity of binomial coefficients.
Instead we highlight a connection to extremal graph theory.
Given $f: m \to n$, view its solution set as a graph with $m$ vertices.
The complement of this graph does not contain any clique with $n+1$ vertices.
By Tur\'an's theorem \cite{tur54}, the complement has at most $\frac{(n-1)(m^2-r^2)}{2n} + \binom{r}{2}$ edges. Subtracting this number from $\binom{m}{2}$ yields the desired bound. The second half of the lemma follows from the unique extremal property of the Tur\'an graph.
\end{proof}

The above lemma implies a sufficient numerical condition for $\id_k \not\leq \wphp{}{m}{n}$:

\begin{lem} \label{lem:id_k_not_below_m->n_counting_bound}
If $m = qn + r$ where $0 \leq r < n$, then $\id_k \not\leq \wphp{}{m}{n}$ as long as
\[ k > \frac{m(m-1)}{q(m-n+r)}. \]
Therefore, if $\id_k \leq \wphp{}{m}{n}$, then $k \leq \lfloor \frac{m(m-1)}{q(m-n+r)} \rfloor$. In particular, for each $q \geq 2$, if $\id_k \leq \wphp{}{qn}{n}$, then $k \leq \lfloor \frac{qn-1}{q-1} \rfloor$.
\end{lem}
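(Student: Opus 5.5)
The plan is a double-counting argument, using Lemma \ref{lem:id_k_le_m_iff_k_functions_no_common_soln} to turn a reduction into a family of functions and Lemma \ref{lem:Turan_bound} to bound the size of that family.

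First, suppose $\id_k \leq \wphp{}{m}{n}$. By Lemma \ref{lem:id_k_le_m_iff_k_functions_no_common_soln}, there are functions $f_1,\dots,f_k : m \to n$ such that no pair $\{i,j\}$ is a solution to two distinct $f_\ell$. So the solution sets of the $f_\ell$ are pairwise disjoint subsets of the set of unordered pairs from $m$, which has $\binom{m}{2} = m(m-1)/2$ elements.

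Next, by Lemma \ref{lem:Turan_bound}, each $f_\ell$ has at least $q(m-n+r)/2$ solutions. This quantity is positive: $q \geq 1$ since $m > n$, and $m - n + r > 0$. Summing over the disjoint solution sets gives
\[ k \cdot \frac{q(m-n+r)}{2} \leq \frac{m(m-1)}{2}, \]
that is, $k \leq m(m-1)/(q(m-n+r))$. This is the contrapositive of the first assertion. The integer bound $k \leq \lfloor m(m-1)/(q(m-n+r)) \rfloor$ follows immediately because $k$ is an integer.

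For the special case, take $m = qn$ with $q \geq 2$, so $r = 0$. The bound becomes
\[ \frac{qn(qn-1)}{q(qn-n)} = \frac{qn-1}{q-1}, \]
and taking the floor gives the claim.

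There is no real obstacle here. The only points needing care are that the disjointness in Lemma \ref{lem:id_k_le_m_iff_k_functions_no_common_soln} is exactly pairwise disjointness of solution sets, and that the denominator is positive.
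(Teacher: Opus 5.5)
Your proposal is correct and follows essentially the same approach as the paper's proof: Lemma~\ref{lem:id_k_le_m_iff_k_functions_no_common_soln} turns the reduction into $k$ functions with pairwise disjoint solution sets, and Lemma~\ref{lem:Turan_bound} gives each at least $q(m-n+r)/2$ solutions, so the total exceeds $\binom{m}{2}$ when $k$ is too large. Your added checks (positivity of the denominator and the simplification to $\frac{qn-1}{q-1}$ when $r=0$) are accurate.
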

\begin{proof}
If $k$ satisfies the given bound, then $k \cdot q(m-n+r)/2 > \binom{m}{2}$. By the previous lemma, this implies that among any $k$ functions from $m$ to $n$, at least two of them have some common solution. Conclude by Lemma \ref{lem:id_k_le_m_iff_k_functions_no_common_soln}.
\end{proof}

\begin{cor} \label{cor:n+2_strictly_below_n+1_for_n>2}
For all $n > 2$, we have $\wphp{}{n+2}{n} < \wphp{}{n+1}{n}$.
\end{cor}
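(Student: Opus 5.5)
The non-strict reduction $\wphp{}{n+2}{n} \leq \wphp{}{n+1}{n}$ is already given by Proposition \ref{prop:m->n_basic_rshp}, taking $m'=n+2$, $m=n+1$ and $n=n'$. So the whole content of the statement is the separation $\wphp{}{n+1}{n} \not\leq \wphp{}{n+2}{n}$. I would get this by combining the equivalence of Proposition \ref{prop:n+1_eq_n+1_choose_2} with the counting bound of Lemma \ref{lem:id_k_not_below_m->n_counting_bound}.

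By Proposition \ref{prop:n+1_eq_n+1_choose_2}, $\id_{\binom{n+1}{2}} \leq \wphp{}{n+1}{n}$. If we also had $\wphp{}{n+1}{n} \leq \wphp{}{n+2}{n}$, transitivity would give $\id_{\binom{n+1}{2}} \leq \wphp{}{n+2}{n}$. It therefore suffices to show that $k=\binom{n+1}{2}$ violates the bound of Lemma \ref{lem:id_k_not_below_m->n_counting_bound} for $m=n+2$.

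For $n>2$, write $n+2 = 1\cdot n + 2$, so $q=1$ and $r=2<n$. This is exactly where $n>2$ is needed: it guarantees $r<n$. The bound then reads
\[ k > \frac{(n+2)(n+1)}{(n+2)-n+2} = \frac{(n+2)(n+1)}{4}. \]
We need $\frac{n(n+1)}{2} > \frac{(n+1)(n+2)}{4}$. This is equivalent to $2n > n+2$, i.e.\ $n>2$, which holds. Hence $\id_{\binom{n+1}{2}} \not\leq \wphp{}{n+2}{n}$, a contradiction, so the reduction is strict.

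There is no real obstacle; the only care needed is computing $q$ and $r$ correctly and noting where the hypothesis $n>2$ enters. For $n=2$ the decomposition becomes $q=2$, $r=0$, and indeed $\wphp{}{4}{2} \equiv \wphp{}{3}{2}$ there, consistent with Figure \ref{fig:m->n}.
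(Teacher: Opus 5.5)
Your proposal is correct and is the paper's argument: the paper likewise applies Lemma \ref{lem:id_k_not_below_m->n_counting_bound} with $q=1$, $r=2$ to get $\id_{\binom{n+1}{2}} \not\leq \wphp{}{n+2}{n}$ and then concludes via Proposition \ref{prop:n+1_eq_n+1_choose_2}. You also spell out the non-strict direction from Proposition \ref{prop:m->n_basic_rshp}, which the paper leaves implicit.
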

\begin{proof}
We have $\frac{(n+2)(n+1)}{1(n+2-n+2)} = \frac{(n+2)(n+1)}{4} < \binom{n+1}{2}$ so $\id_{\binom{n+1}{2}} \not\leq \wphp{}{n+2}{n}$. Conclude by Proposition \ref{prop:n+1_eq_n+1_choose_2}.
\end{proof}

This corollary is false for $n = 2$; see the next corollary. We may now determine exactly when $\wphp{}{m}{n}$ is equivalent to some $\id_k$.

\begin{cor} \label{cor:no_equiv_id_except_4->2}
For all $m > n \geq 2$ and $k \geq 1$, we have $\wphp{}{m}{n} \equiv \id_k$ if and only if (1) $k = \binom{n+1}{2}$ and $m = n+1$, or (2) $n = 2$ and $m = 4$.
\end{cor}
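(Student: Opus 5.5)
We prove both directions separately, reducing the whole question to when $\id_{\binom{n+1}{2}} \leq \wphp{}{m}{n}$.

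For the ``if'' direction, case (1) is exactly Proposition \ref{prop:n+1_eq_n+1_choose_2}. For case (2) we need $\wphp{}{4}{2} \equiv \id_3$. Proposition \ref{prop:m->n_not_below_id_n+1_choose_2-1} already gives $\wphp{}{4}{2} \leq \id_3$, so by Lemma \ref{lem:id_k_le_m_iff_k_functions_no_common_soln} it remains to exhibit three functions $4 \to 2$ with pairwise no common solution. Take the three perfect matchings of $K_4$: $f_1$ has fibers $\{0,1\},\{2,3\}$; $f_2$ has fibers $\{0,2\},\{1,3\}$; $f_3$ has fibers $\{0,3\},\{1,2\}$. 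The solution sets of these functions are pairwise disjoint edge sets, so the lemma applies. (Alternatively, $\wphp{}{4}{2} \equiv \wphp{}{3}{2}$ as in Figure \ref{fig:m->n}, which follows from Proposition \ref{prop:m->n_le_mq+r->nq+r}-type arguments, but the direct construction is simplest.)

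For the ``only if'' direction, suppose $\wphp{}{m}{n} \equiv \id_k$. By Proposition \ref{prop:m->n_not_below_id_n+1_choose_2-1} we have $\wphp{}{m}{n} \leq \id_k$ iff $k \geq \binom{n+1}{2}$. If $k > \binom{n+1}{2}$, then $\id_k \leq \wphp{}{m}{n} \leq \wphp{}{n+1}{n} \equiv \id_{\binom{n+1}{2}}$, using Propositions \ref{prop:m->n_basic_rshp} and \ref{prop:n+1_eq_n+1_choose_2}; this contradicts $\id_k < \id_\ell$ for $k<\ell$. Hence $k = \binom{n+1}{2}$, and we must rule out $\id_{\binom{n+1}{2}} \leq \wphp{}{m}{n}$ for every $m \geq n+2$ except $(m,n) = (4,2)$.

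By Proposition \ref{prop:m->n_basic_rshp}, it suffices to handle $m = n+2$ when $n>2$, and this is exactly what Corollary \ref{cor:n+2_strictly_below_n+1_for_n>2} provides: its proof shows $\id_{\binom{n+1}{2}} \not\leq \wphp{}{n+2}{n}$, and the nonreduction propagates to all larger $m$. For $n = 2$, the case $m = 4$ is allowed, so we need to show $\id_3 \not\leq \wphp{}{5}{2}$, which then propagates to $m \geq 5$. Here Lemma \ref{lem:id_k_not_below_m->n_counting_bound} applies with $m=5$, $q=2$, $r=1$: the bound is $20/(2\cdot 4) = 2.5 < 3$, so the nonreduction follows. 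The only point needing care is checking this arithmetic and confirming the $n>2$ computation in Corollary \ref{cor:n+2_strictly_below_n+1_for_n>2}; I do not expect a genuine obstacle.
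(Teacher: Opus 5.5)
Your proof is correct and follows the paper's argument essentially step for step: you pin down $k=\binom{n+1}{2}$ via Propositions~\ref{prop:m->n_not_below_id_n+1_choose_2-1} and~\ref{prop:n+1_eq_n+1_choose_2}, use Corollary~\ref{cor:n+2_strictly_below_n+1_for_n>2} plus monotonicity for $n>2$, and use the three perfect matchings of $K_4$ for $(4,2)$; the only difference is that for $n=2$, $m\geq 5$ you use the counting bound of Lemma~\ref{lem:id_k_not_below_m->n_counting_bound}, where the paper uses the stronger fact $\id_2\not\leq\wphp{}{5}{2}$ from Proposition~\ref{prop:id_2_not_below_n^2+1}, and both work. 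Drop your parenthetical alternative, though: Proposition~\ref{prop:m->n_le_mq+r->nq+r} cannot produce $\wphp{}{3}{2}\leq\wphp{}{4}{2}$, and the corresponding arrow in Figure~\ref{fig:m->n} rests on precisely the fact $\id_3\leq\wphp{}{4}{2}$ you are establishing, so citing it would be circular.
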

\begin{proof}
For all $n \geq 2$, we have $\wphp{}{m}{n} \leq \id_{\binom{n+1}{2}}$ (Propositions \ref{prop:m->n_basic_rshp}, \ref{prop:n+1_eq_n+1_choose_2}) and $\wphp{}{m}{n} \not\leq \id_{\binom{n+1}{2}-1}$ (Proposition \ref{prop:m->n_not_below_id_n+1_choose_2-1}). So if $\wphp{}{m}{n} \equiv \id_k$, then $k = \binom{n+1}{2}$. If $n > 2$, then by Corollary \ref{cor:n+2_strictly_below_n+1_for_n>2}, $m = n+1$. As for $n = 2$, we have $\id_2 \not\leq \wphp{}{5}{2}$ (Proposition \ref{prop:id_2_not_below_n^2+1}) so $m < 5$, i.e., $m = 3$ or $4$.

Conversely, $\wphp{}{n+1}{n} \equiv \id_{\binom{n+1}{2}}$ (Proposition \ref{prop:n+1_eq_n+1_choose_2}).
Also, it is easy to see that $\id_3 \leq \wphp{}{4}{2}$ (see the beginning of Section \ref{section:adhoc}) so $\wphp{}{3}{2}$ and $\wphp{}{4}{2}$ are both equivalent to $\id_3$.
\end{proof}

\subsection{When $m = n^2$}

Proposition \ref{prop:id_2_not_below_n^2+1} suggests we consider the extreme case $m = n^2$. It turns out that the question of when $\id_k \leq \wphp{}{n^2}{n}$ is equivalent to a longstanding open problem in combinatorics (see \cite[Chapter III.3]{cd07}). Recall that a \emph{Latin square of order $n$} is an $n \times n$ square array, each of whose entries is populated by one of $n$ many symbols, in such a way that each symbol occurs only once in every row and column. 

\begin{thm} \label{thm:latin_sq}
For $k \geq 1$ and $n \geq 2$, the following are equivalent:
\begin{enumerate}
	\item there exist $k$ mutually orthogonal Latin squares of order $n$
	\item $\id_{k+2} \leq \wphp{}{n^2}{n}$.
\end{enumerate}
In particular, $\id_3 \leq \wphp{}{n^2}{n}$ so $\wphp{}{n^2}{n} < \wphp{}{n^2+1}{n}$ by Proposition \ref{prop:id_2_not_below_n^2+1}.
\end{thm}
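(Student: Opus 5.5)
By Lemma \ref{lem:id_k_le_m_iff_k_functions_no_common_soln}, it suffices to show that $k$ mutually orthogonal Latin squares of order $n$ exist if and only if there are $k+2$ functions $f_1,\dots,f_{k+2}: n^2 \to n$ with pairwise no common solution. The plan is to identify $n^2$ with the cells $n \times n$ of a square array via $x \mapsto (\lfloor x/n \rfloor, x \bmod n)$. Then we interpret a function $f: n^2 \to n$ as an array whose cell $(a,b)$ carries the symbol $f(an+b)$.

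\emph{(1) $\Rightarrow$ (2).} Given mutually orthogonal Latin squares $L_1,\dots,L_k$, take $f_1$ to be the row function $(a,b)\mapsto a$, $f_2$ the column function $(a,b) \mapsto b$, and $f_{2+i}(a,b) = L_i(a,b)$. Two functions $f,g$ have a common solution exactly when some pair of distinct cells agrees under both, i.e. when the map $x \mapsto (f(x),g(x))$ from $n^2$ to $n \times n$ is not injective. Since domain and codomain both have size $n^2$, this happens exactly when the map is not a bijection. For the row and column functions the map is the identity, so it is a bijection. For row with $L_i$, bijectivity says each symbol occurs once in each row, and for column with $L_i$ it says each symbol occurs once in each column. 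For $L_i$ with $L_j$, bijectivity is precisely orthogonality. So all $k+2$ functions are pairwise without common solutions.

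\emph{(2) $\Rightarrow$ (1).} Given $f_1,\dots,f_{k+2}$ with pairwise no common solutions, each map $x\mapsto(f_\ell(x),f_{\ell'}(x))$ is injective, hence a bijection $n^2 \to n\times n$. Reindex the cells using $f_1,f_2$: by the bijection $x \mapsto (f_1(x),f_2(x))$, define $L_i(a,b) = f_{i+2}(x)$ where $x$ is the unique element with $(f_1(x),f_2(x)) = (a,b)$. The same bijectivity facts, read backwards, show that each $L_i$ is a Latin square (bijectivity with $f_1$ gives the row condition, with $f_2$ the column condition) and that $L_i, L_j$ are orthogonal (bijectivity of $(f_{i+2},f_{j+2})$). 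The main point needing care is this change of coordinates: the given functions need not be the literal row and column functions, so we must first use $f_1,f_2$ as coordinates.

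For the final statement, the case $k=1$ needs a single Latin square of order $n$, e.g. $L(a,b) = a+b \bmod n$. This gives $\id_3 \leq \wphp{}{n^2}{n}$, while $\id_3 \not\leq \wphp{}{n^2+1}{n}$ by Proposition \ref{prop:id_2_not_below_n^2+1}. Hence $\wphp{}{n^2+1}{n}\not\geq\wphp{}{n^2}{n}$, and together with Proposition \ref{prop:m->n_basic_rshp} this gives the strict inequality. No step is a serious obstacle; the argument is entirely a translation of definitions once Lemma \ref{lem:id_k_le_m_iff_k_functions_no_common_soln} is in hand.
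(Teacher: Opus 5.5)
Your proof is correct and is essentially the paper's own argument. For (1)$\Rightarrow$(2) you use the row and column functions $x\mapsto\lfloor x/n\rfloor$ and $x\mapsto x \bmod n$ together with the squares read as functions $n^2\to n$, and for (2)$\Rightarrow$(1) you re-coordinatize through the bijection $f_1\times f_2$, with Lemma~\ref{lem:id_k_le_m_iff_k_functions_no_common_soln} doing the translation, exactly as the paper does. Your derivation of the final clause correctly yields $\wphp{}{n^2+1}{n} < \wphp{}{n^2}{n}$; the order displayed in the statement is reversed, a typo, as Theorem~\ref{thm:summary} confirms.
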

\begin{proof}
Define functions $f_1, f_2: n^2 \to n$ by $f_1: x \mapsto \lfloor x/n \rfloor$ and $f_2: x \mapsto x \text{ mod } n$. Clearly $f_1$ and $f_2$ have no common solution.

Suppose there are $k$ mutually orthogonal Latin squares of order $n$. We may view each square as a function from $n^2$ to $n$; namely, map $in+j$ to the number in the $i$th row and $j$th column of the Latin square. Denote these functions by $f_3,f_4,\dots,f_{k+2}$. We claim that $f_1,\dots,f_{k+2}$ have no pairwise common solution. Indeed, $f_1$ has no common solution with $f_\ell$ for any $\ell > 2$, because no number appears twice in a single row of a Latin square. Similarly, $f_2$ has no common solution with $f_\ell$ for any $\ell > 2$. As for distinct $\ell,\ell' > 2$, $f_\ell$ and $f_{\ell'}$ have no common solution because the corresponding Latin squares are orthogonal. Therefore $\id_{k+2} \leq \wphp{}{n^2}{n}$.

Conversely, suppose $\id_{k+2} \leq \wphp{}{n^2}{n}$. Fix functions $f_1,\dots,f_{k+2}: n^2 \to n$ with no pairwise common solution. First observe that $f_1 \times f_2$ is an injection from $n^2$ to $n \times n$ (hence a bijection), because $f_1$ and $f_2$ have no common solution. Then, we transform each $f_\ell$ for $\ell > 2$ into a Latin square as follows: In the $i$th row and $j$th column, put the number $f_\ell((f_1 \times f_2)^{-1}(i,j))$. Each row (column  resp.) has no repeated number because $f_1$ ($f_2$ resp.) has no common solution with $f_\ell$. The resulting Latin squares are orthogonal because for distinct $\ell,\ell' > 2$, $f_\ell$ and $f_{\ell'}$ have no common solution.
\end{proof}

Since there exist $n-1$ mutually orthogonal Latin squares of order $n$ if and only if there exists an affine plane of order $n$ (see \cite[Theorem III.3.20]{cd07}), we conclude that:

\begin{thm} \label{thm:affine_plane}
$\id_{n+1} \leq \wphp{}{n^2}{n}$ if and only if there exists an affine plane of order $n$.
\end{thm}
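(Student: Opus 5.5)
This is essentially immediate from Theorem \ref{thm:latin_sq} with $k = n-1$, combined with the classical fact (cited from \cite[Theorem III.3.20]{cd07}) that an affine plane of order $n$ exists if and only if there are $n-1$ mutually orthogonal Latin squares of order $n$. Since $n \geq 2$ gives $k = n-1 \geq 1$, Theorem \ref{thm:latin_sq} applies and yields that $\id_{n+1} \leq \wphp{}{n^2}{n}$ holds if and only if $n-1$ mutually orthogonal Latin squares of order $n$ exist. Chaining this with the cited equivalence proves the statement.

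A more self-contained proof would go directly from affine planes to the characterization in Lemma \ref{lem:id_k_le_m_iff_k_functions_no_common_soln}. An affine plane of order $n$ has $n^2$ points, each line has $n$ points, and the lines fall into $n+1$ parallel classes of $n$ lines each. After identifying the points with $n^2$, each parallel class gives a function $f_\ell : n^2 \to n$ sending a point to the index of the line through it in that class. Two distinct points lie on exactly one common line, so $\{i,j\}$ is a solution to exactly one $f_\ell$. Hence the $n+1$ functions have no pairwise common solution, and Lemma \ref{lem:id_k_le_m_iff_k_functions_no_common_soln} gives $\id_{n+1} \leq \wphp{}{n^2}{n}$.

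For the converse, take $f_1,\dots,f_{n+1}$ with no pairwise common solution. By Lemma \ref{lem:Turan_bound} with $q=n$ and $r=0$, each $f_\ell$ has at least $n\binom{n}{2}$ solutions. Since $(n+1)\cdot n\binom{n}{2} = \binom{n^2}{2}$ and the solution sets are disjoint, equality holds everywhere. So every $f_\ell$ has all fibers of size exactly $n$, and every pair of points is a solution of exactly one $f_\ell$. Take the fibers of all the $f_\ell$ as lines. Two points then lie on exactly one line. Each $f_\ell$ supplies a parallel class, so for a point $p$ and a line $L$ not containing $p$, the fiber of the same $f_\ell$ through $p$ is a line through $p$ disjoint from $L$. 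Counting shows it is the only one: the $n+1$ lines through $p$ cover the other $n^2-1$ points, and the $n$ points of $L$ are covered by $n$ distinct lines through $p$. This gives an affine plane.

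The only step needing care is this counting in the converse, where tightness of the Turán bound forces equal fiber sizes and an exact partition of pairs. Citing Theorem \ref{thm:latin_sq} together with the known equivalence avoids even that.
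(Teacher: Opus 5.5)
Your first paragraph is exactly the paper's argument: Theorem~\ref{thm:latin_sq} with $k=n-1$, combined with the cited equivalence between affine planes of order $n$ and $n-1$ mutually orthogonal Latin squares of order $n$. Your self-contained alternative is also correct; it is essentially the proof of Theorem~\ref{thm:RBIBD} specialized to $q=n$ (an affine plane of order $n$ being an $\mathrm{RBIBD}(n^2,n,1)$), followed by a direct check of the affine-plane axioms.
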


Since affine planes of every prime power order exist (see \cite[Theorem III.3.28]{cd07}):

\begin{cor} \label{cor:id_n+1_below_n^2->n}
If $n$ is a prime power, then $\id_{n+1} \leq \wphp{}{n^2}{n}$.
\end{cor}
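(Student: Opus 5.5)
The plan is to combine Theorem \ref{thm:affine_plane} with the classical existence of affine planes of every prime power order. By Theorem \ref{thm:affine_plane}, it suffices to exhibit an affine plane of order $n$ whenever $n = p^e$ is a prime power. Equivalently, by Theorem \ref{thm:latin_sq}, it suffices to exhibit $n-1$ mutually orthogonal Latin squares of order $n$. To keep things self-contained, I would instead construct directly $n+1$ functions $n^2 \to n$ with no pairwise common solution and then invoke Lemma \ref{lem:id_k_le_m_iff_k_functions_no_common_soln}.

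First, fix the finite field $\mathbb{F}$ of order $n$ and a bijection between $n$ and $\mathbb{F}$. Through it, identify $n^2$ with $\mathbb{F}^2$ (for example via $x \mapsto (\lfloor x/n\rfloor, x \bmod n)$). Next, define the $n+1$ functions as the parallel classes of lines in $\mathbb{F}^2$. For each slope $a \in \mathbb{F}$, let $f_a(x,y) = y - ax$, which sends a point to the intercept of the line of slope $a$ through it. Add the vertical class $f_\infty(x,y) = x$.

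The key check is that two distinct points $(x,y) \neq (x',y')$ are a solution to exactly one of these functions, namely the one corresponding to the slope of the unique line through them. If $x = x'$, then $f_\infty$ identifies them. For any $a$, however, $f_a$ does not, since $y - ax = y' - ax$ would force $y = y'$. If $x \neq x'$, then $f_\infty$ separates them. Moreover, $y - ax = y' - ax'$ holds exactly when $a = (y-y')/(x-x')$, which requires field division and so gives a unique $a$. Hence no pair is a solution to two of the functions, and Lemma \ref{lem:id_k_le_m_iff_k_functions_no_common_soln} yields $\id_{n+1} \leq \wphp{}{n^2}{n}$.

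The only nontrivial ingredient is the existence of a field of order $p^e$. This is standard: take a splitting field of $X^{p^e} - X$ over $\mathbb{F}_p$, or equivalently cite \cite[Theorem III.3.28]{cd07}. It is also where the prime-power hypothesis is used, because the uniqueness of the slope depends on division being available.
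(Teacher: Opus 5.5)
Your proof is correct. The paper gives no separate argument: it combines Theorem \ref{thm:affine_plane} with the cited existence of affine planes of every prime power order \cite[Theorem III.3.28]{cd07}. Its route therefore passes through Theorem \ref{thm:latin_sq} and the cited equivalence between $n-1$ mutually orthogonal Latin squares and affine planes.

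You instead build the plane over $\mathbb{F}$ explicitly and feed its $n+1$ parallel classes directly into Lemma \ref{lem:id_k_le_m_iff_k_functions_no_common_soln}. This is exactly the easy direction of Theorem \ref{thm:RBIBD} with $q=n$. It bypasses Latin squares entirely and reduces the external input to the existence of a field of order $n$.

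The underlying object is the same in both proofs. Under the paper's translation, your $f_\infty$ and $f_0$ play the roles of the coordinate functions $f_1,f_2$ in the proof of Theorem \ref{thm:latin_sq}. Your remaining $f_a$ with $a\neq 0$ are the standard finite-field orthogonal Latin squares $(x,y)\mapsto y-ax$.

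So your version costs a few lines of verification but is self-contained. The paper's version is a one-line citation that also exhibits the result as an instance of the general affine-plane characterization.
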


\noindent Note that the lefthand side above is optimal: by Lemma \ref{lem:id_k_not_below_m->n_counting_bound} with $q = n$, we have that if $\id_k \leq \wphp{}{n^2}{n}$ then $k \leq \lfloor \frac{n^2-1}{n-1} \rfloor = n+1.$

\subsection{When $2n \leq m < n^2$}

We shall generalize Theorem \ref{thm:affine_plane} by establishing an equivalence between statements of the form $\id_k \leq \wphp{}{qn}{n}$ and the existence of certain \emph{resolvable balanced incomplete block designs}, or \emph{RBIBD} for short (see \cite[Chapter II.7]{cd07}). We recall the definition, for completeness.
\begin{defn}
	Fix $p,m,n \in \mathbb{N}$.
	\begin{itemize}
		\item A $\mathrm{BIBD}(p,n,m)$ consists of a finite set $V$ of \emph{points} of size $p$, and a set of subsets of $V$ called \emph{blocks}, each of size $n$, such that every pair of points lies in exactly $m$ many blocks.
		\item An $\mathrm{RBIBD}(p,n,m)$ is a $\mathrm{BIBD}(p,n,m)$ such that the set of blocks can be partitioned into \emph{parallel classes} in such a way that each point lies in exactly one block in each parallel class.
	\end{itemize}
\end{defn}

\noindent We will take the set of points to always be a subset of $\mathbb{N}$.

We mention that RBIBDs are related to certain universal classes of hash functions (Stinson \cite{stinson94}). Note that the following result generalizes Theorem \ref{thm:affine_plane}, because an affine plane of order $q$ is exactly an $\mathrm{RBIBD}(q^2,q,1)$.

\begin{thm} \label{thm:RBIBD}
Suppose $q-1$ divides $qn-1$. Then $\id_{(qn-1)/(q-1)} \leq \wphp{}{qn}{n}$ if and only if there is an $\mathrm{RBIBD}(qn,q,1)$.
\end{thm}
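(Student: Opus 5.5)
The plan is to translate both sides through Lemma \ref{lem:id_k_le_m_iff_k_functions_no_common_soln} and then show that the extremal case of the counting bound in Lemma \ref{lem:id_k_not_below_m->n_counting_bound} forces exactly the structure of an RBIBD. Set $k = (qn-1)/(q-1)$. The dictionary is that a function $f: qn \to n$ corresponds to a partition of the point set $V = qn$ into the $n$ fibers $f^{-1}(j)$. Its solutions are exactly the pairs of points lying in a common fiber.

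\emph{RBIBD to reduction.} Suppose we have an $\mathrm{RBIBD}(qn,q,1)$ on points $qn$. Each parallel class consists of $qn/q = n$ disjoint blocks covering $V$. Label the blocks of the $\ell$th class by $0,\dots,n-1$ and let $f_\ell(x)$ be the label of the block containing $x$.

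Next we count classes. Each point lies in one block per class, and each block through a point $x$ covers $q-1$ other points. Since every pair lies in exactly one block, the number of classes is $(qn-1)/(q-1) = k$.

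If $f_\ell$ and $f_{\ell'}$ (with $\ell \neq \ell'$) had a common solution $\{i,j\}$, then $i,j$ would lie in a block of class $\ell$ and also in a block of class $\ell'$. These blocks are distinct, contradicting $\lambda = 1$. So Lemma \ref{lem:id_k_le_m_iff_k_functions_no_common_soln} gives $\id_k \leq \wphp{}{qn}{n}$.

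\emph{Reduction to RBIBD.} Conversely, suppose $\id_k \leq \wphp{}{qn}{n}$. By Lemma \ref{lem:id_k_le_m_iff_k_functions_no_common_soln}, fix $f_1,\dots,f_k: qn \to n$ whose solution sets are pairwise disjoint.

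By Lemma \ref{lem:Turan_bound} with $m = qn$ and $r = 0$, each $f_\ell$ has at least $q(qn-n)/2 = qn(q-1)/2$ solutions. The solution sets are disjoint subsets of the $\binom{qn}{2}$ pairs, so
\[ k \cdot \frac{qn(q-1)}{2} \leq \frac{qn(qn-1)}{2}. \]
Since $k = (qn-1)/(q-1)$, this is an equality. Hence every $f_\ell$ attains the minimum number of solutions, and the solution sets together cover every pair.

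The equality clause of Lemma \ref{lem:Turan_bound} then says every fiber of every $f_\ell$ has size exactly $q$. This holds because $\lfloor qn/n \rfloor = \lceil qn/n \rceil = q$. Note that fibers may be empty only if $q = 0$, which does not occur.

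Take as blocks all fibers $f_\ell^{-1}(j)$, and as the $\ell$th parallel class the $n$ fibers of $f_\ell$. Each class partitions $V$, and each block has size $q$. Every pair lies in some block because the solution sets cover all pairs. It lies in at most one block because fibers within a class are disjoint and solution sets across classes are disjoint. This gives an $\mathrm{RBIBD}(qn,q,1)$.

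\emph{Where the difficulty lies.} The only delicate step is the equality case of Lemma \ref{lem:Turan_bound}, which we may assume from the paper. One should also check that blocks coming from different classes are never counted as the same block. Two equal blocks of size $q \geq 2$ would share a pair, which disjointness forbids. So no duplicates arise, and the implicit assumption $q \geq 2$ is consistent with $q-1$ dividing $qn-1$.
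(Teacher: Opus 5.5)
Your proposal is correct and takes essentially the same route as the paper. The parallel classes give $k$ functions with pairwise disjoint solution sets, and conversely the identity $k\cdot q(qn-n)/2=\binom{qn}{2}$ forces the equality case of Lemma \ref{lem:Turan_bound}, so every fiber has size $q$ and the fibers form an $\mathrm{RBIBD}(qn,q,1)$. The differences are cosmetic: you count parallel classes via the number of blocks through a fixed point rather than via the total number of blocks, and you write out the covering and no-duplicate-block checks that the paper leaves implicit.
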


\begin{proof}
Given a RBIBD$(qn,q,1)$, the number of its blocks is $n\frac{qn-1}{q-1}$, so the number of its parallel classes is $\frac{qn-1}{q-1}$. Each parallel class defines a function from $qn$ to $n$. The resulting set of functions has no pairwise common solution.

Conversely, suppose $\id_{(qn-1)/(q-1)} \leq \wphp{}{qn}{n}$. Fix functions $f_i: qn \to n$ for all $i < (qn-1)/(q-1)$ having no pairwise common solution. By Lemma \ref{lem:Turan_bound}, since $\frac{qn-1}{q-1} \cdot \frac{q(qn-n)}{2} = \binom{qn}{2}$, each $f_i$ has exactly $\frac{q(qn-n)}{2}$ solutions and so each pre-image $f_i^{-1}(j)$ has size $q$. Therefore the blocks $\{f_i^{-1}(j): i < (qn-1)/(q-1), j < n\}$ form an $\mathrm{RBIBD}(qn,q,1)$ with parallel classes $\{f_i^{-1}(j): j < n\}$.
\end{proof}

For $q = 2,3$, we obtain a complete picture by applying classical combinatorial results.

\begin{prop} \label{prop:2n_optimal}
For all $n \geq 2$, $\id_{2n-1} \leq \wphp{}{2n}{n}$ and $\id_{2n} \not\leq \wphp{}{2n}{n}$.
\end{prop}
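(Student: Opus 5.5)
The non-reduction half is a direct application of Lemma \ref{lem:id_k_not_below_m->n_counting_bound} with $q = 2$ and $m = 2n$. It says that if $\id_k \leq \wphp{}{2n}{n}$ then $k \leq \lfloor \frac{2n-1}{1} \rfloor = 2n-1$. Hence $\id_{2n} \not\leq \wphp{}{2n}{n}$.

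For the positive half, I will use Theorem \ref{thm:RBIBD} with $q = 2$. Its divisibility hypothesis holds trivially, since $q-1 = 1$ divides $2n-1$, and $(qn-1)/(q-1) = 2n-1$. So it suffices to exhibit an $\mathrm{RBIBD}(2n,2,1)$. Its blocks are exactly all pairs from a $2n$-element set, and a resolution is a partition of the edges of the complete graph $K_{2n}$ into $2n-1$ perfect matchings, i.e.\ a $1$-factorization of $K_{2n}$.

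For this I will give the standard round-robin construction explicitly, so the argument is self-contained. Take points $\{\infty\} \cup \mathbb{Z}_{2n-1}$, realized as the numbers $0,\dots,2n-1$ with $2n-1$ playing the role of $\infty$. For each $i \in \mathbb{Z}_{2n-1}$, let the $i$th class consist of $\{\infty, i\}$ together with the pairs $\{i-t, i+t\}$ for $t = 1,\dots,n-1$. Each class is a perfect matching, because the elements $i \pm t$ for $t = 1,\dots,n-1$ are distinct and different from $i$, as $2n-1$ is odd.

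It remains to check that every pair lies in exactly one class. A pair $\{\infty, i\}$ lies only in class $i$. A pair $\{a,b\}$ with $a \neq b$ in $\mathbb{Z}_{2n-1}$ lies in class $i$ exactly when $a + b = 2i$, and this has a unique solution $i$ because $2$ is invertible mod $2n-1$.

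Equivalently, and avoiding Theorem \ref{thm:RBIBD}, I can read off functions $f_i : 2n \to n$ directly: $f_i$ labels the $n$ pairs of the $i$th matching by $0,\dots,n-1$. Then distinct $f_i, f_{i'}$ have no common solution, since each solution of $f_i$ is an edge of matching $i$, and the matchings are edge-disjoint. Lemma \ref{lem:id_k_le_m_iff_k_functions_no_common_soln} then finishes.

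There is no real obstacle here. The only point needing care is the verification that the round-robin classes cover each pair exactly once, which reduces to the invertibility of $2$ modulo the odd number $2n-1$.
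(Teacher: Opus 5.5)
Your proof is correct and follows the paper's route. The paper also gets the reduction from Theorem~\ref{thm:RBIBD} with $q=2$ via a $1$-factorization of $K_{2n}$, and the non-reduction from Lemma~\ref{lem:id_k_not_below_m->n_counting_bound}. The only difference is that you construct the $1$-factorization explicitly (round robin on $\{\infty\}\cup\mathbb{Z}_{2n-1}$), whereas the paper cites its existence; your verification is sound and makes the argument self-contained.
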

\begin{proof}
A RBIBD$(2n,2,1)$ is precisely a decomposition of the complete graph $K_{2n}$ into perfect matchings.
Such a decomposition always exists (see \cite[\S VII.5.5]{cd07}).
So $\id_{2n-1} \leq \wphp{}{2n}{n}$ by Theorem \ref{thm:RBIBD}.
We have $\id_{2n} \not\leq \wphp{}{2n}{n}$ by Lemma \ref{lem:id_k_not_below_m->n_counting_bound}.
\end{proof}

\begin{cor} \label{cor:2n_strictly_above_2n+1}
For all $n \geq 2$, $\wphp{}{2n}{n} > \wphp{}{2n+1}{n}$.
\end{cor}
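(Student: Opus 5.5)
Since the reduction $\wphp{}{2n+1}{n} \leq \wphp{}{2n}{n}$ already holds by Proposition \ref{prop:m->n_basic_rshp}, it suffices to show that $\wphp{}{2n}{n} \not\leq \wphp{}{2n+1}{n}$. The plan is to separate the two problems by an identity problem: Proposition \ref{prop:2n_optimal} gives $\id_{2n-1} \leq \wphp{}{2n}{n}$. So if we show $\id_{2n-1} \not\leq \wphp{}{2n+1}{n}$, then transitivity of $\gred$ rules out $\wphp{}{2n}{n} \leq \wphp{}{2n+1}{n}$.

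For the nonreduction, I would apply Lemma \ref{lem:id_k_not_below_m->n_counting_bound} with $m = 2n+1 = 2\cdot n + 1$, so $q = 2$ and $r = 1$ (valid since $n \geq 2 > 1$). The lemma gives $\id_k \not\leq \wphp{}{2n+1}{n}$ whenever
\[ k > \frac{(2n+1)(2n)}{2(2n+1-n+1)} = \frac{n(2n+1)}{n+2}. \]
We therefore need $2n-1 > \frac{n(2n+1)}{n+2}$, which is equivalent to $(2n-1)(n+2) > n(2n+1)$. Expanding gives $2n^2+3n-2 > 2n^2+n$, that is, $2n > 2$. This holds for every $n \geq 2$.

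Putting these together: if $\wphp{}{2n}{n} \leq \wphp{}{2n+1}{n}$, then $\id_{2n-1} \leq \wphp{}{2n}{n} \leq \wphp{}{2n+1}{n}$, contradicting the bound just obtained. There is no real obstacle. The only points needing care are the arithmetic check above and confirming that the case $r=1$ satisfies the hypothesis $0 \leq r < n$ of the lemma, which it does for all $n \geq 2$.
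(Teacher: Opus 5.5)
Your proposal is correct and takes the same approach as the paper: it separates the two problems via $\id_{2n-1}$, using Proposition \ref{prop:2n_optimal} for $\id_{2n-1} \leq \wphp{}{2n}{n}$ and Lemma \ref{lem:id_k_not_below_m->n_counting_bound} with $q=2$, $r=1$ for $\id_{2n-1} \not\leq \wphp{}{2n+1}{n}$. Your arithmetic, which reduces the condition to $2n > 2$, is right. Citing Proposition \ref{prop:m->n_basic_rshp} for the non-strict direction makes explicit a step the paper leaves implicit.
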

\begin{proof}
$\id_{2n-1} \not\leq \wphp{}{2n+1}{n}$ by Lemma \ref{lem:id_k_not_below_m->n_counting_bound} and the fact that $2n-1 > \frac{(2n+1)2n}{2(n+2)}$. On the other hand, $\id_{2n-1} \leq \wphp{}{2n}{n}$ (Proposition \ref{prop:2n_optimal}).
\end{proof}

\begin{prop} \label{prop:3n_optimal}
If $n \geq 3$ is odd, then $\id_{(3n-1)/2} \leq \wphp{}{3n}{n}$ and $\id_{((3n-1)/2)+1} \not\leq \wphp{}{3n}{n}$.
\end{prop}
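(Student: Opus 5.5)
This follows the pattern of Proposition \ref{prop:2n_optimal}: the positive direction comes from Theorem \ref{thm:RBIBD} applied with $q=3$, and the negative direction from the counting bound in Lemma \ref{lem:id_k_not_below_m->n_counting_bound}.

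For the positive direction, we first check that the divisibility hypothesis of Theorem \ref{thm:RBIBD} holds. With $q = 3$ we need $q-1 = 2$ to divide $qn-1 = 3n-1$, and this holds exactly when $n$ is odd. In that case $(qn-1)/(q-1) = (3n-1)/2$. So it suffices to exhibit an $\mathrm{RBIBD}(3n,3,1)$ for every odd $n \geq 3$.

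An $\mathrm{RBIBD}(v,3,1)$ is precisely a Kirkman triple system of order $v$. By the theorem of Ray-Chaudhuri and Wilson, Kirkman triple systems exist for every $v \equiv 3 \pmod 6$ (see \cite[Chapter II.7]{cd07}). Since $n$ is odd, $3n \equiv 3 \pmod 6$, so an $\mathrm{RBIBD}(3n,3,1)$ exists. Theorem \ref{thm:RBIBD} then gives $\id_{(3n-1)/2} \leq \wphp{}{3n}{n}$.

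The main obstacle is this existence result, which is a deep classical theorem. We intend to cite it rather than reprove it. For the specific value $n=3$ it is just the affine plane of order $3$, so that case is also covered by Corollary \ref{cor:id_n+1_below_n^2->n}.

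For the negative direction, apply the last clause of Lemma \ref{lem:id_k_not_below_m->n_counting_bound} with $q = 3$. It says that if $\id_k \leq \wphp{}{3n}{n}$, then $k \leq \lfloor (3n-1)/2 \rfloor$. When $n$ is odd this floor equals $(3n-1)/2$ exactly. Hence $\id_{((3n-1)/2)+1} \not\leq \wphp{}{3n}{n}$, which completes the proof.
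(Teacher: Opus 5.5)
Your proposal is correct and matches the paper's proof: a Kirkman triple system $\mathrm{RBIBD}(3n,3,1)$ exists because $3n \equiv 3 \pmod 6$, Theorem~\ref{thm:RBIBD} with $q=3$ then gives the reduction, and Lemma~\ref{lem:id_k_not_below_m->n_counting_bound} gives the optimality bound. Your explicit check that $2 \mid 3n-1$ exactly when $n$ is odd is a nice touch that the paper leaves implicit.
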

\begin{proof}
Since $3n$ is $3$ mod $6$, we may fix a RBIBD$(3n,3,1)$ (see \cite[Theorem II.2.77]{cd07}), better known as a Kirkman triple system of order $3n$.
By Theorem \ref{thm:RBIBD}, $\id_{(3n-1)/2} \leq \wphp{}{3n}{n}$.
By Lemma \ref{lem:id_k_not_below_m->n_counting_bound}, $\id_{((3n-1)/2)+1} \not\leq \wphp{}{3n}{n}$.
\end{proof}

\begin{cor} \label{cor:3n_strictly_above_3n+1_for_n_odd}
If $n \geq 3$ is odd, then $\wphp{}{3n}{n} > \wphp{}{3n+1}{n}$.
\end{cor}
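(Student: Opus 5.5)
The plan mirrors the proof of Corollary \ref{cor:2n_strictly_above_2n+1}. The reduction $\wphp{}{3n+1}{n} \leq \wphp{}{3n}{n}$ is already given by Proposition \ref{prop:m->n_basic_rshp}, since $3n+1 \geq 3n > n$. So it remains only to show that the reverse reduction fails. For this I use $\id_{(3n-1)/2}$ as a separating problem.

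By Proposition \ref{prop:3n_optimal}, since $n \geq 3$ is odd, we have $\id_{(3n-1)/2} \leq \wphp{}{3n}{n}$. I then show $\id_{(3n-1)/2} \not\leq \wphp{}{3n+1}{n}$ via Lemma \ref{lem:id_k_not_below_m->n_counting_bound}. Write $m = 3n+1 = qn + r$ with $q = 3$ and $r = 1$; this is valid because $n \geq 3 > 1$, so $0 \leq r < n$. The lemma gives the nonreduction provided
\[ \frac{3n-1}{2} > \frac{(3n+1)\cdot 3n}{3(3n+1-n+1)} = \frac{(3n+1)n}{2n+2}. \]
Multiplying through by $2(n+1)$, this is equivalent to $(3n-1)(n+1) > (3n+1)n$. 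That is, $3n^2+2n-1 > 3n^2+n$, or simply $n > 1$, which holds.

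Now suppose, for contradiction, that $\wphp{}{3n}{n} \leq \wphp{}{3n+1}{n}$. By transitivity of $\gred$, we would get $\id_{(3n-1)/2} \leq \wphp{}{3n}{n} \leq \wphp{}{3n+1}{n}$, contradicting the nonreduction just established. Hence the reduction is strict.

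There is no real obstacle here. The only points needing care are checking the arithmetic inequality and confirming that the decomposition $m = qn+r$ satisfies the hypothesis $0 \leq r < n$ required by Lemma \ref{lem:id_k_not_below_m->n_counting_bound}.
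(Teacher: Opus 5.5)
Your proposal is correct and follows essentially the same route as the paper: separate via $\id_{(3n-1)/2}$, using Proposition \ref{prop:3n_optimal} for the reduction to $\wphp{}{3n}{n}$ and Lemma \ref{lem:id_k_not_below_m->n_counting_bound} (with $q=3$, $r=1$) for the nonreduction to $\wphp{}{3n+1}{n}$. Your arithmetic check and your explicit appeal to Proposition \ref{prop:m->n_basic_rshp} for the non-strict direction are both sound.
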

\begin{proof}
$\id_{(3n-1)/2} \not\leq \wphp{}{3n+1}{n}$ by Lemma \ref{lem:id_k_not_below_m->n_counting_bound} and the fact that $\frac{3n-1}{2} > \frac{(3n+1)(3n)}{3(2n+2)}$. On the other hand, $\id_{(3n-1)/2} \leq \wphp{}{3n}{n}$ (Proposition \ref{prop:3n_optimal}).
\end{proof}

We summarize a few of our results in the following theorem.

\begin{thm} \label{thm:summary}
We have $\id_3 \equiv \wphp{}{3}{2} \equiv \wphp{}{4}{2} > \wphp{}{5}{2}$. As for $n > 2$, we have $\id_{\binom{n+1}{2}} \equiv \wphp{}{n+1}{n} > \wphp{}{n+2}{n} \geq \wphp{}{2n}{n} > \wphp{}{2n+1}{n} \geq \wphp{}{n^2}{n} > \wphp{}{n^2+1}{n}$.
\end{thm}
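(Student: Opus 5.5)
The theorem is a summary, so the plan is to assemble each link of the chain from results already proved. The weak relations ($\geq$) will come from Proposition \ref{prop:m->n_basic_rshp}. The strict relations ($>$) need a witnessing $\id_k$ that lies below the larger problem but not the smaller one. Equivalences come from Proposition \ref{prop:n+1_eq_n+1_choose_2} and Corollary \ref{cor:no_equiv_id_except_4->2}.

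\emph{Case $n=2$.} First, $\id_3 \equiv \wphp{}{3}{2}$ is Proposition \ref{prop:n+1_eq_n+1_choose_2} with $n=2$. Next, $\id_3 \equiv \wphp{}{4}{2}$ is part of Corollary \ref{cor:no_equiv_id_except_4->2}. If we want this self-contained, the reduction $\id_3 \leq \wphp{}{4}{2}$ uses the three perfect matchings of $K_4$ (Proposition \ref{prop:2n_optimal} with $n=2$), and the reverse reduction is Propositions \ref{prop:m->n_basic_rshp} and \ref{prop:n+1_eq_n+1_choose_2}. For $\wphp{}{4}{2} > \wphp{}{5}{2}$: the reduction $\leq$ holds by Proposition \ref{prop:m->n_basic_rshp}. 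For strictness, we know $\id_2 \leq \id_3 \leq \wphp{}{4}{2}$, whereas $\id_2 \not\leq \wphp{}{5}{2}$ by Proposition \ref{prop:id_2_not_below_n^2+1}, since $5 > 2^2$.

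\emph{Case $n>2$.} The links are as follows.
\begin{itemize}
\item $\id_{\binom{n+1}{2}} \equiv \wphp{}{n+1}{n}$ is Proposition \ref{prop:n+1_eq_n+1_choose_2}.
\item $\wphp{}{n+1}{n} > \wphp{}{n+2}{n}$ is Corollary \ref{cor:n+2_strictly_below_n+1_for_n>2}.
\item $\wphp{}{n+2}{n} \geq \wphp{}{2n}{n}$ holds by Proposition \ref{prop:m->n_basic_rshp}, since $n>2$ gives $2n \geq n+2$.
\item $\wphp{}{2n}{n} > \wphp{}{2n+1}{n}$ is Corollary \ref{cor:2n_strictly_above_2n+1}.
\item $\wphp{}{2n+1}{n} \geq \wphp{}{n^2}{n}$ holds by Proposition \ref{prop:m->n_basic_rshp}, using $n^2 \geq 2n+1$ for $n \geq 3$.
\item $\wphp{}{n^2}{n} > \wphp{}{n^2+1}{n}$ is the final clause of Theorem \ref{thm:latin_sq}. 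There $\id_3 \leq \wphp{}{n^2}{n}$, witnessed by the row and column functions together with a single Latin square, for example the cyclic one $(i,j)\mapsto i+j \bmod n$. Combined with Proposition \ref{prop:id_2_not_below_n^2+1}, this gives strictness.
\end{itemize}

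There is no real obstacle here; the only care needed is checking the numerical inequalities $2n \geq n+2$ and $n^2 \geq 2n+1$ that Proposition \ref{prop:m->n_basic_rshp} requires.
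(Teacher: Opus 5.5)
Your proof is correct and follows the paper's approach: the paper's proof likewise just assembles Propositions~\ref{prop:m->n_basic_rshp}, \ref{prop:n+1_eq_n+1_choose_2}, \ref{prop:id_2_not_below_n^2+1}, Theorem~\ref{thm:latin_sq} and the corollaries separating $2n$ from $2n+1$. Your version is more complete, since you explicitly invoke Corollary~\ref{cor:n+2_strictly_below_n+1_for_n>2} for the link $\wphp{}{n+1}{n} > \wphp{}{n+2}{n}$, justify $\id_3 \equiv \wphp{}{4}{2}$, and check $2n \geq n+2$ and $n^2 \geq 2n+1$, all of which the paper's one-line citation leaves implicit.
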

\begin{proof}
Apply Propositions \ref{prop:m->n_basic_rshp}, \ref{prop:n+1_eq_n+1_choose_2}, \ref{prop:id_2_not_below_n^2+1}, Theorem \ref{thm:latin_sq} and Corollaries \ref{cor:2n_strictly_above_2n+1}, \ref{cor:3n_strictly_above_3n+1_for_n_odd}.
\end{proof}

\subsection{When $m \leq 2n+1$}

When $m = 2n$, Proposition \ref{prop:2n_optimal} tells us exactly when $\id_k \leq \wphp{}{m}{n}$. Similar methods (i.e., graph decompositions) yield results for $m \leq 2n+1$:

\begin{prop} \label{prop:id_below_m->n_graph_decomp}
For $2n+1 \geq m > n \geq 2$, let $\alpha(m,n) = (m-1)\left\lfloor \frac{m}{2(m-n)} \right\rfloor$ if $m$ is even and $\alpha(m,n) = \max\left\{\frac{m-1}{2}\left\lfloor \frac{m}{m-n} \right\rfloor, m\left\lfloor \frac{m-1}{2(m-n)} \right\rfloor\right\}$ otherwise. Then $\id_{k(m,n)} \leq \wphp{}{m}{n}$, where $k(m,n) = \max\{\alpha(m',n) \mid m' \in [m,2n+1]\}$. 
\end{prop}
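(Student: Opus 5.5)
By Lemma \ref{lem:id_k_le_m_iff_k_functions_no_common_soln}, it suffices to exhibit, for each $m' \in [m,2n+1]$, $\alpha(m',n)$ functions $m' \to n$ with pairwise no common solution. We then transfer the result from $m'$ down to $m$ using Proposition \ref{prop:m->n_basic_rshp}, which gives $\wphp{}{m'}{n} \leq \wphp{}{m}{n}$, together with transitivity. So fix $m$ with $n < m \leq 2n+1$, and set $d = m-n$. The key observation is that $m \leq 2n+1$ means $d \leq n+1$. A function $f: m \to n$ whose nontrivial fibers are $d$ disjoint pairs (and whose remaining $m-2d$ points are singletons) uses exactly $d + (m-2d) = m-d = n$ colors; this requires $2d \leq m$, i.e.\ $d \leq n$. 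Such an $f$ has as its solution set exactly those $d$ pairs, i.e.\ a matching of size $d$ in $K_m$. Hence a family of functions with pairwise no common solution corresponds to a family of pairwise edge-disjoint matchings of size exactly $d$ in $K_m$. (If $d = n+1$, i.e.\ $m = 2n+1$, a single function still needs some fiber structure; I would instead treat that case by using $n-1$ pairs and one triangle, or simply observe that the formula's second odd term handles it. I expect this edge case to need a separate check.)

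For $m$ even, $K_m$ decomposes into $m-1$ perfect matchings, as in Proposition \ref{prop:2n_optimal}. Each perfect matching has $m/2$ edges, so it contains $\lfloor m/(2d) \rfloor$ disjoint submatchings of size $d$. This yields $(m-1)\lfloor m/(2(m-n)) \rfloor = \alpha(m,n)$ functions.

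For $m$ odd, two constructions are available, and we take the better one. First, $K_m$ decomposes into $m$ near-perfect matchings, each with $(m-1)/2$ edges, which gives $m\lfloor (m-1)/(2d) \rfloor$ functions. Second, $K_m$ decomposes into $(m-1)/2$ Hamiltonian cycles (Walecki). A Hamiltonian cycle of odd length $m$ has $m$ edges, and we want to partition as much of it as possible into matchings of size $d$. Each such matching is a set of $d$ pairwise non-adjacent edges of the cycle, and a perfect fractional cover suggests roughly $\lfloor m/d \rfloor$ of them. Concretely, I would list the cycle edges $e_0,\dots,e_{m-1}$ and take the matchings to be the edge classes $\{e_{t}, e_{t+2}, e_{t+4},\dots\}$ with step $2$. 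Since $m$ is odd, stepping by $2$ traverses all $m$ edges cyclically, and consecutive edges in this order are nonadjacent, except for the single wrap-around. Cutting this cyclic sequence of length $m$ into $\lfloor m/d \rfloor$ consecutive blocks of length $d \leq (m-1)/2$ gives blocks that are genuine matchings whenever a block does not contain both $e_t$ and $e_{t\pm1}$. That happens only if the block length is at least about $(m+1)/2$, which is excluded because $2d < m$. This gives $\frac{m-1}{2}\lfloor m/(m-n) \rfloor$ functions.

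The main obstacle is the Hamiltonian-cycle step: verifying carefully that blocks of $d$ consecutive terms in the step-$2$ ordering are matchings, including near the wrap-around, and handling the boundary value $d$ close to $m/2$. I would check this by writing $e_i = \{v_i, v_{i+1}\}$ and noting that $e_i, e_j$ share a vertex iff $j \equiv i \pm 1 \pmod m$, which in step-$2$ order sits $(m\pm1)/2$ positions apart. Since $d \leq (m-1)/2$, no block of $d$ consecutive positions contains two elements at that distance. Taking the maximum over $m' \in [m,2n+1]$ then finishes the proof.
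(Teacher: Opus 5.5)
For $n<m\le 2n$ your argument is correct and is essentially the paper's. You use a $1$-factorization when $m$ is even. When $m$ is odd you take the better of a near-$1$-factorization and a Hamiltonian decomposition. Each is cut into edge-disjoint matchings of size $d=m-n$, and each matching gives a function $m\to n$. You extract $\lfloor m/d\rfloor$ matchings from a Hamiltonian cycle via the step-$2$ ordering and consecutive blocks, using that adjacent edges lie $(m-1)/2$ positions apart. The paper instead takes every $\lfloor m/d\rfloor$-th edge. The difference is cosmetic: both need exactly $d\le (m-1)/2$, which holds for odd $m\le 2n$.

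The genuine gap is the case $m'=2n+1$. You flag it but leave it open, and neither suggested fix works.
\begin{itemize}
\item The second odd term is $(2n+1)\lfloor 2n/(2n+2)\rfloor=0$, so it handles nothing. The value $\alpha(2n+1,n)=n$ comes from the Hamiltonian-cycle term, where $d=n+1>(m-1)/2$, so your block argument fails. Indeed $K_{2n+1}$ has no matching of size $n+1$ at all.
\item A triangle plus $n-1$ pairs cannot sit inside a Hamiltonian cycle of length $\ge 5$. So some edge of the triangle is a chord, which may be a solution of another cycle's function, and disjointness is lost.
\end{itemize}

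In fact no argument can close this case as stated. For $n=2$ the proposition asserts $\id_{k(5,2)}=\id_2\le\wphp{}{5}{2}$, which contradicts Proposition~\ref{prop:id_2_not_below_n^2+1} since $5>2^2$.

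The paper's own proof has the same defect. Its ``matching of size $r=n+1$ by picking alternate edges'' is really a path $v_{2n}v_0v_1$ plus $n-1$ disjoint edges. Putting that path in one fiber adds the chord $\{v_{2n},v_1\}$. For example, decomposing $K_5$ into the cycles $0\,1\,2\,3\,4$ and $0\,2\,4\,1\,3$ yields $(014)(23)$ and $(023)(14)$, which share $\{2,3\}$ and $\{1,4\}$.

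What your argument does prove is the statement for all $n<m\le 2n$. There the term $m'=2n+1$ is harmless, because $\alpha(2n+1,n)=n\le 2n-1=\alpha(2n,n)$. The case $m=2n+1$ must either be dropped or handled by a separate packing construction for $n\ge 3$. Such constructions exist in small cases:
\begin{itemize}
\item for $n=3$, the functions $(013)(26)(45)$, $(124)(05)(36)$, $(235)(04)(16)$ from $7$ to $3$ have pairwise no common solution;
\item for $n=4$, Proposition~\ref{prop:6_leq_9->4,10_leq_7->5} gives even more.
\end{itemize}
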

\begin{proof}
Since $\wphp{}{m'}{n} \leq \wphp{}{m}{n}$ whenever $m' \geq m$ (Proposition \ref{prop:m->n_basic_rshp}), it suffices to show that if $m$ and $n$ satisfy $2n \geq m > n \geq 2$, then $\id_{\alpha(m,n)} \leq \wphp{}{m}{n}$. In this proof, let $r$ denote $m-n$.

If $m$ is even, we may decompose $K_m$ into $m-1$ perfect matchings. Each perfect matching has $m/2$ edges, so we may find $\lfloor (m/2)/r \rfloor$ disjoint matchings of size $r$ in it. Each matching of size $r$ yields a function from $m$ to $(m-2r)+r = n$. In total we obtain $(m-1)\lfloor m/2r \rfloor = \alpha(m,n)$ many functions from $m$ to $n$, with no pairwise common solution. By Lemma \ref{lem:id_k_le_m_iff_k_functions_no_common_soln}, $\id_{\alpha(m,n)} \leq \wphp{}{m}{n}$.

On the other hand, if $m$ is odd, we may decompose $K_m$ into $\binom{m}{2}/m = (m-1)/2$ Hamiltonian cycles (see \cite[Remark VII.5.85.1]{cd07}).
Each Hamiltonian cycle has $m$ edges.
If $m = 2n+1$, then we can find a matching of size $r = n+1$ in each Hamiltonian cycle by picking alternate edges.
Similarly, if $m \leq 2n$, since $\lfloor m/r \rfloor \geq 2$, we may find $\lfloor m/r \rfloor$ disjoint matchings of size $r$ in each Hamiltonian cycle, by picking every $\lfloor m/r \rfloor$th edge in the cycle.
In either case, each matching of size $r$ yields a function from $m$ to $n$ and in total we obtain $\frac{m-1}{2}\lfloor m/r \rfloor$ many functions from $m$ to $n$, with no pairwise common solution.

Alternatively, if $m$ is odd, we may decompose $K_m$ into `almost perfect matchings' as follows: First decompose $K_{m+1}$ into $m$ perfect matchings, then remove the added vertex and all incident edges. Each `almost perfect matching' has $(m-1)/2$ edges, so we may find $\lfloor (m-1)/2r \rfloor$ disjoint matchings of size $m-n$ in it. In total we obtain $m \lfloor (m-1)/2r \rfloor$ many functions from $m$ to $n$, with no pairwise common solution.

Combining the previous two paragraphs, we conclude by Lemma \ref{lem:id_k_le_m_iff_k_functions_no_common_soln} that if $m$ is odd, then $\id_{\alpha(m,n)} \leq \wphp{}{m}{n}$.
\end{proof}

The following divisibility conditions on $m$ and $n$ guarantee optimality in Proposition \ref{prop:id_below_m->n_graph_decomp}. (Below, we speak of $n+r$ rather than $m$ in order to simplify the formulas.)

\begin{cor}
Suppose $1 \leq r < n$. If $n$ is $r$ mod $2r$, or if $r$ is odd and $n$ is $0$ or $r+1$ mod $2r$, then $\id_{\binom{n+r}{2}/r} \leq \wphp{}{n+r}{n}$.
\end{cor}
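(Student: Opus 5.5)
The plan is to read this off Proposition \ref{prop:id_below_m->n_graph_decomp} by taking $m' = m = n+r$ in the maximum defining $k(m,n)$. In each of the three congruence cases, one of the floors appearing in $\alpha(m,n)$ will be exact, and $\alpha(n+r,n)$ will then equal $\binom{n+r}{2}/r$. Since $1 \leq r < n$, we have $n < m = n+r < 2n \leq 2n+1$, so $m$ is in the allowed range. Hence $\id_{\alpha(m,n)} \leq \id_{k(m,n)} \leq \wphp{}{m}{n}$. (The first reduction holds because $\id_a \leq \id_b$ for $a \leq b$, via the inclusion.) Throughout, $m - n = r$, so the denominators $m-n$ in the definition of $\alpha$ become $r$.

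\emph{Case $n \equiv r \pmod{2r}$.} Then $m = n+r \equiv 2r \equiv 0 \pmod{2r}$. So $m$ is even and $m/(2r)$ is an integer, and
\[ \alpha(m,n) = (m-1)\frac{m}{2r} = \binom{m}{2}\Big/ r. \]

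\emph{Case $r$ odd and $n \equiv 0 \pmod{2r}$.} Here $n$ is even and $r$ is odd, so $m$ is odd. Moreover $m \equiv r \pmod{2r}$, so $r \mid m$ and $\lfloor m/r \rfloor = m/r$. The first term in the maximum defining $\alpha$ is then
\[ \frac{m-1}{2}\cdot\frac{m}{r} = \binom{m}{2}\Big/ r, \]
so $\alpha(m,n)$ is at least this value.

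\emph{Case $r$ odd and $n \equiv r+1 \pmod{2r}$.} Then $m \equiv 2r+1 \equiv 1 \pmod{2r}$, so $m$ is odd and $2r \mid m-1$. The second term in the maximum is
\[ m\cdot\frac{m-1}{2r} = \binom{m}{2}\Big/ r, \]
so again $\alpha(m,n)$ is at least this value.

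In every case $\binom{n+r}{2}/r$ is an integer and is at most $\alpha(n+r,n) \leq k(n+r,n)$, which gives $\id_{\binom{n+r}{2}/r} \leq \wphp{}{n+r}{n}$.

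There is no real obstacle; the only care needed is the parity and divisibility bookkeeping in each case, to confirm which clause of $\alpha$ applies. It is also worth remarking that Lemma \ref{lem:id_k_not_below_m->n_counting_bound} with $q=1$ shows this value is optimal, which is what the sentence before the statement means by optimality.
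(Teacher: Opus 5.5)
Your proof is correct and matches the paper's own argument: in each of the three congruence cases you check that the relevant floor in $\alpha(n+r,n)$ is exact, so the corresponding term equals $\binom{n+r}{2}/r$, and then you apply Proposition \ref{prop:id_below_m->n_graph_decomp}. Your extra checks are details the paper leaves implicit: that $n < n+r < 2n$ puts $m$ in the allowed range, and that $\id_a \leq \id_b$ for $a \leq b$.
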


The {conclusion} of the corollary is best possible because if $1 \leq r < n$ and $k > \binom{n+r}{2}/r$, then $\id_k \not\leq \wphp{}{n+r}{n}$ (Lemma \ref{lem:id_k_not_below_m->n_counting_bound}).

\begin{proof}
If $n$ is $r$ mod $2r$, then $n+r$ is even, $2r$ divides $n+r$, and $(n+r-1)\lfloor\frac{n+r}{2r}\rfloor = \binom{n+r}{2}/r$. If $r$ is odd and $n$ is $0$ mod $2r$, then $n+r$ is odd and $\frac{n+r-1}{2}\lfloor \frac{n+r}{r} \rfloor = \binom{n+r}{2}/r$. If $r$ is odd and $n$ is $r+1$ mod $2r$, then $n+r$ is odd, $2r$ divides $n+r-1$, and $(n+r)\lfloor \frac{n+r-1}{2r} \rfloor = \binom{n+r}{2}/r$.
\end{proof}

Proposition \ref{prop:id_below_m->n_graph_decomp} is not optimal in general, as we will see in Proposition \ref{prop:6_leq_9->4,10_leq_7->5}.

\section{Limit problems a.k.a.\ jumps}\label{sec:limprobs}

We turn our attention from the problems $\wphp{}{m}{n}$ to their \emph{jumps} $\wphp{}{m}{n}'$, defined as follows:

\begin{defn} \label{defn:wphp_jump}
For each $m > n \geq 2$, the problem $\wphp{}{m}{n}'$ has instances all (pointwise) convergent sequences of functions $(f_s: m \to n)_{s \in \N}$, with the solutions to any such sequence being all $\{i,j\}$ such that $i \neq j$ and $\lim_s f_s(i) = \lim_s f_s(j)$.
\end{defn}

\noindent The terminology and notation come from computability theory, where one can form, for each set $X$, its \emph{(Turing) jump}, $X'$. This is defined as the set of all (codes for) algorithms $e$, with access to $X$ as an oracle, that halt in finite time on a prescribed input. (Thus, $X'$ is also called \emph{the halting set} relative to $X$.) It is a standard fact of computability that a function $f : \mathbb{N} \to \mathbb{N}$ is computable from $X'$ if and only if there is a function $\hat{f} : \mathbb{N}^2 \to \mathbb{N}$ computable from $X$ such that $\lim_s \hat{f}(x,s)$ exists for all $x$ and is equal to $f(x)$. Here, $\hat{f}$ is called a \emph{limit approximation} to $f$. We can thus think of $\wphp{'}{m}{n}$ as simply the problem $\wphp{}{m}{n}$, but with instances replaced by limit approximations to them, and solutions unchanged.

If we are to consider reductions involving $\wphp{'}{m}{n}$ then Definition \ref{defn:gen_reductions} is no longer suitable. For starters, the functions $\Phi$ and $\Psi$ there have domains and co-domains that are subsets of $\mathbb{N}$, whereas the objects of interest here are no longer just numbers but also functions (and indeed, sequences of functions). As discussed above, we can represent such sequences by elements of $\mathbb{N}^\mathbb{N}$. We thus move to \emph{functionals} $\Phi,\Psi$ with domains and co-domains (possibly proper subsets of) $\mathbb{N}^\mathbb{N}$. We must restrict these to avoid trivialities. Indeed, if we defined $\Phi((f_s)_{s \in \N})$ directly in terms of $\lim_s f_s$,
then no information would be gained by studying $\wphp{'}{m}{n}$ instead of $\wphp{}{m}{n}$.
(Same for $\Psi$.)
A common restriction is for $\Phi$ and $\Psi$ to be continuous with respect to the Baire space topology on $\mathbb{N}^\mathbb{N}$. (This is the topology generated by basic clopen sets of the form $[\alpha] = \{f \in \mathbb{N}^\mathbb{N} : \alpha \preceq f\}$, where $\preceq$ is the intial segment relation, and $\alpha$ ranges over all finite strings of numbers.) A further restriction is for $\Phi$ and $\Psi$ to be \emph{effectively} continuous functionals, also called \emph{Turing functionals}. Informally, $\Phi$ is a Turing functional if there is an algorithm mapping finite strings to finite strings such that $\Phi(f) = g$ if and only if initial segments of $f$ are mapped by the algorithm to initial segments of $g$ in a monotone and consistent way. A standard fact is that $f \leq_{\rm T} g$ if and only if $f = \Phi(g)$ for some Turing functional $\Phi$. (See \cite{Soare-2016}, Chapter 3, for details.)

Other than the jump of $\wphp{}{m}{n}$,
we will also consider the jump of $\id_k$:

\begin{defn}\label{defn:lim_k}
For each $k \geq 2$, the instances of the problem $\mflim_k$ are convergent sequences of numbers $< k$,
with unique solution being the limit of said sequence.
\end{defn}

We now define a notion of reducibility that is suitable for comparing the jump problems.

\begin{defn}
Let $\mathsf{P}$ and $\mathsf{Q}$ be problems.
\begin{enumerate}
	\item $\mfP$ is \emph{strongly Weihrauch reducible} to $\mfQ$,
written $\mfP \lesW \mfQ$,
if there are Turing functionals $\Phi$ and $\Psi$ such that
\begin{itemize}
	\item for every $\mfP$-instance $p$, $\Phi(p)$ is a $\mfQ$-instance, and
	\item for every $\mfQ$-solution $q$ to $\Phi(p)$, $\Psi(q)$ is a $\mfP$-solution to $p$.
\end{itemize}
\item $\mfP$ is \emph{strongly continuous Weihrauch reducible} to $\mfQ$,
written $\mfP \lecsW \mfQ$,
if the above holds for continuous (not necessarily Turing) functionals $\Phi$ and $\Psi$.
\item $\mfP$ is \emph{Weihrauch reducible} to $\mfQ$,
written $\mfP \leW \mfQ$,
if there are Turing functionals $\Phi$ and $\Psi$ such that
\begin{itemize}
	\item for every $\mfP$-instance $p$, $\Phi(p)$ is a $\mfQ$-instance, and
	\item for every $\mfQ$-solution $q$ to $\Phi(p)$, $\Psi(p,q)$ is a $\mfP$-solution to $p$.
\end{itemize}
\item $\mfP$ is \emph{continuous Weihrauch reducible} to $\mfQ$,
written $\mfP \lecW \mfQ$,
if the above holds for continuous (not necessarily Turing) functionals $\Phi$ and $\Psi$.
\end{enumerate}
\end{defn}

\noindent As usual, $\leW$, $\lecW$, $\lesW$ and $\lecsW$ each form a preorder.
We write $\mfP \equiv \mfQ$ with the appropriate subscript and/or superscript
if $\mfP$ and $\mfQ$ are reducible to each other.

It is easy to see that for finite problems in the sense used in Section \ref{sec:intro}, the reducibility $\gred$ from Definition \ref{defn:gen_reductions} coincides with both $\lesW$ and $\lecsW$. Not so, however, for $\leW$ or $\lecW$. Indeed, under both $\leW$ and $\lecW$ the problems $\wphp{}{m}{n}$ all collapse because access to $f: m \to n$ is sufficient for uniformly computing a solution for $f$ (simply by checking all possible pairs of numbers below $m$).
We will see (e.g., from the following theorem and previous separations) that this is not the case for the jumps $\wphp{'}{m}{n}$. The following theorem lifts all previous results about $\wphp{}{m}{n}$ to analogous results about $\wphp{'}{m}{n}$. We defer its proof to the next section.

\begin{thm} \label{thm:reduction_lifting}
The following are equivalent:
\begin{enumerate}
	\item $\mflim_k \leW \wphp{'}{m}{n}$
	\item $\mflim_k \lesW \wphp{'}{m}{n}$
	\item $\mflim_k \lecsW \wphp{'}{m}{n}$
	\item $\id_k \lecsW \wphp{}{m}{n}$
	\item $\id_k \gred \wphp{}{m}{n}$
	\item $\id_k \lesW \wphp{}{m}{n}$.
\end{enumerate}
Similarly, $\wphp{}{m_0}{n_0} \leq \wphp{}{m_1}{n_1}$ if and only if $\wphp{'}{m_0}{n_0} \leW \wphp{'}{m_1}{n_1}$.
\end{thm}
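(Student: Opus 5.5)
We would organize the proof as a cycle of implications. Several are immediate from the definitions: (2)$\Rightarrow$(1) and (2)$\Rightarrow$(3) hold because a strong Weihrauch reduction is both a Weihrauch reduction and a continuous strong one. For finite problems, $\gred$, $\lesW$ and $\lecsW$ coincide (noted after the definition of the reducibilities), which gives (4)$\Leftrightarrow$(5)$\Leftrightarrow$(6). The substantive steps are then (5)$\Rightarrow$(2) (lifting), (3)$\Rightarrow$(5), and (1)$\Rightarrow$(5) (both lowering).

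For (5)$\Rightarrow$(2), we take $f_1,\dots,f_k$ as in Lemma \ref{lem:id_k_le_m_iff_k_functions_no_common_soln}. Given a convergent sequence $(j_s)$ of numbers in $\{1,\dots,k\}$, we let $\Phi$ output the sequence $(f_{j_s})_s$. This converges pointwise to $f_j$, where $j=\lim_s j_s$, because it is eventually constant. $\Psi$ maps a solution $\{i,i'\}$ to the unique $\ell$ with $f_\ell(i)=f_\ell(i')$. Both maps are computable and $\Psi$ does not consult the instance. The same idea handles the second statement's forward direction. Given $\Phi_0,\Psi_0$ witnessing $\wphp{}{m_0}{n_0}\leq\wphp{}{m_1}{n_1}$, we apply $\Phi_0$ termwise, $(g_s)\mapsto(\Phi_0(g_s))_s$, which converges to $\Phi_0(\lim g_s)$, and we keep $\Psi_0$ as the backward map.

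For the lowering directions we use a diagonalization/limit argument. Assume $\Phi,\Psi$ witness (1), with $\Psi$ allowed to see the instance. For each $j$ we build a sequence that eventually stabilizes at $j$, and we choose the stages adaptively. The plan is to construct finite strings $\sigma_1\preceq\sigma_2\preceq\cdots$ and fix, for each $\ell$, the function $g_\ell:m\to n$ that $\Phi$ "commits to" after the string $\sigma$ followed by enough copies of $\ell$. Because $m,n$ are finite, there are only finitely many possible limit functions. We will argue that for any $\ell\neq\ell'$, $g_\ell$ and $g_{\ell'}$ share no solution. The argument goes as follows. Suppose $\{i,i'\}$ solves both. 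Consider an instance that alternates: it extends with $\ell$ long enough for $\Phi$'s output to look like $g_\ell$, and for $\Psi(p,\{i,i'\})$ to commit to $\ell$ by continuity (using a finite initial segment of $p$). It then switches to $\ell'$ long enough for $\Psi$ to commit to $\ell'$, then back to $\ell$, and so on. If this alternation never ends, the input does not converge, so we must ensure termination. The fix is to end on whichever value forces $\Psi$'s committed answer to be wrong. Since $\Psi$ has already committed on a finite prefix, we continue with the other value forever. The output then converges to that value's $g$, for which $\{i,i'\}$ is still a solution, and this yields a contradiction.

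The main obstacle is making "$\Phi$'s output looks like $g_\ell$" rigorous. $\Phi$'s output need only converge, so we must first pass to extensions along which $\Phi$'s outputs stabilize. We handle this with a finite-injury/forcing step: for each $\ell$ choose an extension $\sigma^\frown\ell^{t}$ after which, among all further extensions by $\ell$, the output value at each of the finitely many arguments has stabilized. This is possible because the constant-$\ell$ continuation is a legitimate instance. Doing this for all $k$ values around a common prefix gives the $g_\ell$ and the witnessing functions. The same technique, now applied with instances $(g_s)$ that are eventually constant at various functions $m_0\to n_0$, gives the reverse direction of the second statement. The resulting limit maps $g\mapsto g^*$ and the committed $\Psi$-values define a $\gred$-reduction. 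One subtlety remains: the finite prefix $\sigma$ must be shared across all $g$, and we fix it before defining the maps so that $\Psi$ becomes instance-independent.
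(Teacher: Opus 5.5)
Your trivial implications, your direct lifting (5)$\Rightarrow$(2), and the forward half of the second statement are fine. The gap is in the lowering step (1)$\Rightarrow$(5), and in the reverse half of the second statement, where $\Psi$ sees the instance.

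The inference ``the output then converges to that value's $g$'' is unjustified. Your $g_{\ell'}$ is $\lim\Phi(\sigma\ell'^{\N})$, but the instance you build is $\sigma\ell^t\ell'^{\N}$. The limit of $\Phi$ can depend on the whole instance, not just on $\sigma$ and the eventual value. Your stabilization step only controls $\Phi$ along the constant continuations $\sigma\ell^{\N}$, which is just the convergence you were already given. In fact your intermediate claim that the $g_\ell$ pairwise share no solution fails. Here is a counterexample with $k=2$, $m=3$, $n=2$:
\begin{itemize}
\item At stage $s$, let $\Phi(p)$ output $h=(01)(2)$ while $p\restriction(s+1)$ is constant, and $f'_{p(s)}$ afterwards, where $f'_0=(02)(1)$ and $f'_1=(12)(0)$.
\item Let $\Psi(p,\{0,1\})=p(0)$, $\Psi(p,\{0,2\})=0$ and $\Psi(p,\{1,2\})=1$.
\end{itemize}
This is a valid Weihrauch reduction of $\mflim_2$ to $\wphp{'}{3}{2}$. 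The prefix $\sigma=\emptyset$ already meets your stabilization requirement, since the output is constantly $h$ along both $0^{\N}$ and $1^{\N}$. Yet $g_0=g_1=h$ share the solution $\{0,1\}$. On $0\,1^{\N}$ (that is, $0$ followed by $1^{\N}$) the limit is $f'_1\neq g_1$, so your alternation produces no contradiction. Note also that once $\Psi(p,\{i,i'\})$ has committed on a prefix it cannot later ``commit to $\ell'$''.

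The missing idea is that you must stabilize $\Psi$, not $\Phi$. There are only finitely many pairs, so choose a finite string $\sigma$ such that the set $D(\sigma)$ of pairs $q$ with $\Psi(\sigma,q)$ convergent (using only $\sigma$) is not enlarged by any extension of $\sigma$. Put $g_\ell=\lim\Phi(\sigma\ell^{\N})$. If $q$ solves $g_\ell$, then $\Psi(\sigma\ell^{\N},q)=\ell$ converges on some prefix $\sigma\ell^t$. Hence $q\in D(\sigma\ell^t)=D(\sigma)$, and so $\Psi(\sigma,q)=\ell$. A common solution of $g_\ell$ and $g_{\ell'}$ then gives $\ell=\ell'$ immediately, with no alternation. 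The same $\sigma$ gives $\Phi_0(g)=\lim\Phi(\sigma g^{\N})$ and $\Psi_0(q)=\Psi(\sigma,q)$ for the reverse half of the second statement. This is exactly the content of the finite-tolerance Lemma~\ref{lem:finitely_tolerant}, which the paper uses for (1)$\Rightarrow$(2). Once $\Psi$ ignores the instance, (3)$\Rightarrow$(5) follows from constant instances alone. With that repair, your route is a legitimate, more elementary alternative that avoids the jump inversion theorem the paper cites for (3)$\Rightarrow$(4).
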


The above theorem allows us to separate $\wphp{}{m_0}{n_0}$ and $\wphp{}{m_1}{n_1}$ by separating their jumps. The latter appears more tractable because the jump operation ``amplifies the distance'' between problems. For example, since $\id_2 \not\leq \wphp{}{n^2+1}{n}$ (Proposition \ref{prop:id_2_not_below_n^2+1}), the $\id_k$-hierarchy does not separate $\wphp{}{n^2+1}{n}$, $\wphp{}{n^2+2}{n}$, etc. Instead, we will establish some separations between the latter problems by comparing them with other hierarchies of problems, such as the \emph{all-or-unique-choice} problems:

In the sequel, we use the following notation. Fix $k,j,n,m \in \N$. We let $k^n$ denote the finite sequence $p : n \to k$ with $p(s) = k$ for all $s < n$, and $k^\N$ the infinite sequence $p \in \N^\N$ with $p(s) = k$ for all $s$. We let $k^n j^m$ and $k^n j^\N$ denote the concatenation of $k^n$ by $j^m$ and $j^\N$, respectively. We identify $j$ with $j^1$ in this context, so that, e.g., $k^n j$ is the same as $k^n j^1$, etc.

\begin{defn}
For each $k \geq 2$, the problem $\AoUC_k$ has two types of instances $p \in \N^\N$: Either there is a \emph{unique} $\ell < k$ that does \emph{not} appear in $p$,
in which case $p$ has unique solution $\ell$,
or $p = k^\N$,
in which case any $j < k$ is a solution to $p$.
\end{defn}

The problems $\AoUC_k$ are strictly ascending in $k$ under Weihrauch reducibility \cite[Proposition 7.10]{brattka21}.
It follows that $\AoUC_k \sleW \mflim_2$ for all $k$:
To reduce $\AoUC_k$ to $\mflim_2$, use $\mflim_2$ to decide whether any $\ell < k$ ever appears in $p$.

We begin with a combinatorial characterization analogous to Lemma \ref{lem:id_k_le_m_iff_k_functions_no_common_soln}.

\begin{thm} \label{thm:AoUC_k_leW_m_iff_k+1_functions_no_triangle}
Suppose $k \geq 2$ and $m > n \geq 2$.
The following are equivalent:
\begin{enumerate}
	\item there exist $g,f_0,\dots,f_{k-1}: m \to n$ such that for all distinct $\ell,\ell' < k$, the functions $f_\ell$, $f_{\ell'}$, and $g$ have no $\wphp{}{m}{n}$-solution in common;
	\item $\AoUC_k \leW \wphp{'}{m}{n}$;
	\item $\AoUC_k \lecW \wphp{'}{m}{n}$.
\end{enumerate}
\end{thm}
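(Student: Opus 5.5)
The plan is to prove (1)$\Rightarrow$(2)$\Rightarrow$(3)$\Rightarrow$(1). The middle implication is immediate, since Turing functionals are continuous. The first implication is an explicit reduction, and the last is a continuity argument.

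\emph{(1)$\Rightarrow$(2).} Fix $g,f_0,\dots,f_{k-1}$ as in (1). The forward functional $\Phi$ takes an $\AoUC_k$-instance $p$ and outputs the sequence $(h_s)_{s\in\N}$ defined as follows. Set $h_s = g$ as long as fewer than $k-1$ distinct values $<k$ have appeared in $p\restriction s$. Once exactly $k-1$ distinct values $<k$ have appeared, set $h_s = f_\ell$, where $\ell$ is the missing value. This sequence changes at most once, so it converges. If $p = k^\N$, its limit is $g$; otherwise its limit is $f_\ell$ for the unique $\ell$ not appearing in $p$.

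The backward functional $\Psi(p,\{i,j\})$ splits into two cases according to whether $\{i,j\}$ is a solution of $g$ (a decidable question, since $g$ is a fixed finite object).
\begin{itemize}
\item If $\{i,j\}$ is a solution of $g$, then by (1) it is a solution of at most one $f_\ell$. Output that $\ell$ if it exists, and $0$ otherwise. This is correct when $p=k^\N$, since then any answer is acceptable. It is also correct otherwise: the limit is then $f_\ell$ for the true $\ell$, and $\{i,j\}$ solves it.
\item If $\{i,j\}$ is not a solution of $g$, then the limit cannot be $g$, so $p \neq k^\N$. Hence $p$ must eventually display $k-1$ distinct values $<k$. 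Search $p$ until this happens and output the missing value.
\end{itemize}
This uses $p$ in $\Psi$, which is allowed for $\leW$.

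\emph{(3)$\Rightarrow$(1).} This is the main step. Let continuous $\Phi,\Psi$ witness $\AoUC_k \lecW \wphp{'}{m}{n}$, and let $g = \lim \Phi(k^\N)$. For each of the finitely many pairs $e=\{i,j\}$ that solve $g$, the value $\Psi(k^\N,e)$ is defined, because $e$ is a valid solution. By continuity, this value is determined by $e$ together with some finite prefix $k^{u_e}$ of $k^\N$. Choose $s$ larger than all these $u_e$. For each $\ell<k$, let $p_\ell$ be $k^s$ followed by an infinite repetition of the list of all numbers $<k$ other than $\ell$. Then $p_\ell$ is an $\AoUC_k$-instance with unique solution $\ell$. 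Set $f_\ell = \lim \Phi(p_\ell)$.

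Suppose some $e$ were a common solution of $g$, $f_\ell$ and $f_{\ell'}$ with $\ell\neq\ell'$. Since $p_\ell$ and $p_{\ell'}$ both extend $k^{u_e}$, we would have $\Psi(p_\ell,e) = \Psi(k^\N,e) = \Psi(p_{\ell'},e)$. But correctness of the reduction requires $\Psi(p_\ell,e) = \ell$ and $\Psi(p_{\ell'},e) = \ell'$, which is a contradiction. So $g,f_0,\dots,f_{k-1}$ satisfy (1).

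The key point is choosing $s$ uniformly over the finitely many solutions of $g$, so that all the $p_\ell$ share the prefix that fixes $\Psi$'s answers. I expect this to be the only delicate step. Note that no stabilization of $\Phi(k^\N)$ itself is needed: only $\Psi$'s use on the finitely many pairs matters.
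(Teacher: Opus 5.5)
Your proof is correct and follows essentially the same route as the paper: the same forward functional that changes at most once (from $g$ to $f_\ell$), with the case split on whether $\{i,j\}$ solves $g$, for (1)$\Rightarrow$(2); and the same continuity argument fixing a prefix $k^s$ that works for all of the finitely many solutions of $g$, for (3)$\Rightarrow$(1). The differences are cosmetic. Your $p_\ell$ repeats the values $<k$ other than $\ell$ forever instead of listing them once before $k^\N$. You also get definedness of $\Psi(p_\ell,e)$ from correctness of the reduction rather than from continuity, which is if anything slightly more careful about partial functionals.
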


\begin{proof}
To prove that (1) implies (2), first suppose we have functions $g,f_0,\dots,f_{k-1}: m \to n$ as above.
Given an $\AoUC_k$-instance $p \in \N^\N$,
we compute a sequence $(h_s: m \to n)_{s \in \N}$ as follows: for each $s$, if there is a unique $\ell < k$ which does not appear in $p\restriction s$,
we let $h_s = f_\ell$; otherwise, $h_s = g$.
Thus, $\lim_s h_s = f_\ell$ for some $\ell < k$ if and only if $\ell$ is the unique number below $k$ that does not appear in $p$.

Now let $\{i,j\}$ be any $\wphp{}{m}{n}$-solution to $\lim_s h_s: m \to n$. We compute an $\AoUC_k$-solution for $p$ as follows.
First, if $\{i,j\}$ is \emph{not} an $\wphp{}{m}{n}$-solution to $g$, then necessarily $\lim_s h_s \neq g$, so there is a unique $\ell < k$ that does not appear in $p$. We can compute $\ell$ from $p$ by searching for an $s \in \N$ such that $\ell$ is the unique number smaller than $k$ which does not appear in $p\restriction s$. We then output $\Psi(p,\{i,j\}) = \ell$.

If, on the other hand, $\{i,j\}$ is a solution to $g$,
then by assumption there is at most one $\ell < k$ such that $\{i,j\}$ is also a solution to $f_\ell$. Note that we can computably determine, from $\{i,j\}$, whether there is such an $\ell$. If so, $\lim_s h_s$ is either $g$ or $f_\ell$, so in either case $\ell$ is a solution to $p$, and we can define $\Psi(p,\{i,j\}) = \ell$. If no such $\ell$ exists then necessarily $\lim_s h_s = g$, in which case we can simply set $\Psi(p,\{i,j\}) = 0$. Thus, $\AoUC_k \leW \wphp{'}{m}{n}$.

The implication from (2) to (3) is trivial.

To show that (3) implies (1), suppose $\AoUC_k \lecW \wphp{'}{m}{n}$ as witnessed by continuous functionals $\Phi$ and $\Psi$.
Then $\Phi(k^\N)$ is a limit approximation to a function $h_k: m \to n$.
For each solution $\{i,j\}$ to $h_k$,
$\Psi(k^\N,\{i,j\})$ outputs some number smaller than $k$.
Since $\Psi$ is a continuous map to a discrete space and there are only finitely many such pairs $\{i,j\}$,
we may fix $s \in \N$ large enough so that for all such $i \neq j$ and all $p \in \N^\N$ extending $k^s$,
$\Psi(p,\{i,j\})$ is defined and equals $\Psi(k^\N,\{i,j\})$.

For each $\ell < k$, let $p_\ell$ be the sequence obtained by deleting $\ell$ from
\[ k^s 1 2 \cdots (k-1)k^\N. \]
Each $p_\ell$ is an $\AoUC_k$-instance with unique solution $\ell$,
so $\Phi(p_\ell)$ is a limit approximation to a function $f_\ell: m \to n$.
Furthermore, $\Psi(p_\ell,\{i,j\})$ is defined and equals $\ell$ whenever $\{i,j\}$ is an $\wphp{}{m}{n}$-solution to $f_\ell$. To complete the proof, we show that for all $\ell \neq \ell' < k$, the functions
$f_\ell$, $f_{\ell'}$, and $g$ have no solution in common.
Indeed, suppose $\{i,j\}$ is a solution to $f_\ell$ and $g$. By choice of $s$, it follows that $\Psi(k^s,\{i,j\}) = \Psi(k^\N,\{i,j\})$, and since $p_{\ell'}$ extends $k^s$ for all $\ell' < k$, it also follows that $\Psi(p_{\ell'},\{i,j\}) = \Psi(k^s,\{i,j\}) = \Psi(p_\ell,\{i,j\}) = \ell$. But now by correctness of $\Psi$, we see that $\{i,j\}$ cannot be a $\wphp{}{m}{n}$-solution to $f_{\ell'}$ for any $\ell' < k$ different from $\ell$, because $\ell$ is not an $\AoUC_k$-solution to $p_{\ell'}$.
\end{proof}

Recall from Proposition \ref{prop:id_2_not_below_n^2+1} (and Theorem \ref{thm:reduction_lifting}) that $\mflim_2 \not\lecW \wphp{'}{n^2+1}{n}$.
We shall improve this result using Theorem \ref{thm:AoUC_k_leW_m_iff_k+1_functions_no_triangle}.

\begin{prop} \label{prop:AoUC_not_below_n^2+1->n}
$\AoUC_{\binom{n+1}{2}+1} \not\lecW \wphp{'}{n^2+1}{n}$ for all $n \geq 2$.
\end{prop}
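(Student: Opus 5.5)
The plan is to reduce the statement to a purely combinatorial one via Theorem \ref{thm:AoUC_k_leW_m_iff_k+1_functions_no_triangle}, and then settle that with two applications of the pigeonhole principle. Set $k = \binom{n+1}{2}+1$ and $m = n^2+1$. By the equivalence of (1) and (3) in Theorem \ref{thm:AoUC_k_leW_m_iff_k+1_functions_no_triangle}, it suffices to show the following. For any functions $g, f_0, \dots, f_{k-1} : m \to n$, there exist distinct $\ell, \ell' < k$ and a pair $\{i,j\}$ that is simultaneously a $\wphp{}{m}{n}$-solution to $g$, $f_\ell$, and $f_{\ell'}$.

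First I use the size of $m$ against $g$. Since $m = n^2+1 > n \cdot n$, some fiber $g^{-1}(c)$ has at least $n+1$ elements; this is the same observation that underlies Proposition \ref{prop:id_2_not_below_n^2+1}. Fix a set $S \subseteq g^{-1}(c)$ with $|S| = n+1$. Every pair of distinct elements of $S$ is then a solution to $g$.

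Next I use each $f_\ell$ on $S$. The restriction $f_\ell \restriction S$ maps $n+1$ points into $n$ values, so it has a solution $\{i,j\} \subseteq S$. For each $\ell < k$, let $e_\ell$ be, say, the lexicographically least such pair. Each $e_\ell$ is one of the $\binom{n+1}{2}$ pairs from $S$, and there are $k = \binom{n+1}{2}+1$ indices $\ell$. By the pigeonhole principle there are $\ell \neq \ell'$ with $e_\ell = e_{\ell'}$. This common pair is a solution to $f_\ell$, to $f_{\ell'}$, and (lying inside $S$) to $g$, which is the required triple-common solution. So condition (1) of the theorem fails, hence (3) fails, which is exactly the claim.

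There is no real obstacle. The only point needing care is to invoke the direction (3)$\Rightarrow$(1) of Theorem \ref{thm:AoUC_k_leW_m_iff_k+1_functions_no_triangle} in contrapositive form, so that the conclusion is the non-reduction under $\lecW$ rather than merely under $\leW$.
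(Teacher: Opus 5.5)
Your proposal is correct and follows the paper's own proof: contrapose (3)$\Rightarrow$(1) of Theorem \ref{thm:AoUC_k_leW_m_iff_k+1_functions_no_triangle}, take a set $S$ of size $n+1$ on which $g$ is constant, and pigeonhole the $\binom{n+1}{2}+1$ functions' solutions inside $S$ into the $\binom{n+1}{2}$ pairs from $S$. Choosing a specific pair $e_\ell$ for each $\ell$ just makes explicit the step the paper leaves implicit.
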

\begin{proof}
Consider functions $g,f_0,\dots,f_{\binom{n+1}{2}}: n^2+1 \to n$.
By the pigeonhole principle, fix $S \subseteq n^2+1$ such that $g$ is constant on $S$ and $|S| = n+1$.
For each $\ell \leq \binom{n+1}{2}$, $f_\ell$ must have a solution $\{i,j\}$ where $i,j \in S$.
Since there are only $\binom{n+1}{2}$ many pairs of distinct numbers from $S$,
there must be $\ell \neq \ell'$ and $\{i,j\}$ (from $S$)
such that $\{i,j\}$ is a solution for both $f_\ell$ and $f_{\ell'}$.
\end{proof}

The previous proposition is sharp because $\AoUC_{\binom{n+1}{2}+1} \leW \mflim_2 \leW \wphp{'}{n^2}{n}$ (Theorems \ref{thm:latin_sq}, \ref{thm:reduction_lifting})
and $\AoUC_{\binom{n+1}{2}} \leW \wphp{'}{n^2+1}{n}$ (Corollary \ref{cor:AoUC_n+1_choose_2_below_n^2+n->n} later).

\begin{prop} \label{prop:id_k_leW_m->n_implies_AoUC_k_leW_mn->n}
For $k \geq 2$ and $n^2 \geq m > n \geq 2$,
if $\id_k \leq \wphp{}{m}{n}$,
then $\AoUC_k \leW \wphp{'}{mn}{n}$.
\end{prop}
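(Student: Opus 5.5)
The plan is to verify condition (1) of Theorem \ref{thm:AoUC_k_leW_m_iff_k+1_functions_no_triangle} for the pair $(mn,n)$. That theorem then gives $\AoUC_k \leW \wphp{'}{mn}{n}$ at once; note that $mn > n \geq 2$, so its hypotheses hold.

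By Lemma \ref{lem:id_k_le_m_iff_k_functions_no_common_soln}, the assumption $\id_k \leq \wphp{}{m}{n}$ gives functions $f'_0,\dots,f'_{k-1}: m \to n$ such that no two distinct ones have a common $\wphp{}{m}{n}$-solution. Identify $mn$ with $m \times n$, for instance by sending $(x,y)$ to $yn' $-style coding $x + my$. Then define
\[ g(x,y) = y, \qquad f_\ell(x,y) = f'_\ell(x) \quad (\ell < k). \]
Both are functions $mn \to n$.

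Next I check the triangle condition. Let $\{(x,y),(x',y')\}$ be a common solution to $g$, $f_\ell$ and $f_{\ell'}$ with $\ell \neq \ell'$. Being a solution to $g$ forces $y = y'$. Since the two points are distinct, this gives $x \neq x'$. Being a solution to $f_\ell$ and to $f_{\ell'}$ then gives $f'_\ell(x) = f'_\ell(x')$ and $f'_{\ell'}(x) = f'_{\ell'}(x')$. So $\{x,x'\}$ is a common solution to $f'_\ell$ and $f'_{\ell'}$, which contradicts the choice of the $f'$'s. Hence $g, f_0,\dots,f_{k-1}$ satisfy (1), and the theorem yields the reduction.

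I do not expect a real obstacle; the only delicate point is the coordinate bookkeeping. The construction works because $g$ splits the domain into $n$ columns of size $m$, and each $f_\ell$ restricted to a column is a copy of $f'_\ell$. A pair that is a $g$-solution therefore lives in a single column, where the pairwise-disjointness of solutions transfers directly. The hypothesis $m \leq n^2$ is not needed in the argument itself. It only reflects that the premise $\id_k \leq \wphp{}{m}{n}$ is vacuous otherwise (Proposition \ref{prop:id_2_not_below_n^2+1}).
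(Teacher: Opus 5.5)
Your proposal is correct and is essentially the paper's proof. With the coding $x + my$, your $g$ and $f_\ell$ are exactly the paper's $g(x) = \lfloor x/m \rfloor$ and $f_\ell(x) = h_\ell(x \bmod m)$, and the common-solution argument through Lemma \ref{lem:id_k_le_m_iff_k_functions_no_common_soln} and Theorem \ref{thm:AoUC_k_leW_m_iff_k+1_functions_no_triangle} is the same. You are also right that $m \leq n^2$ plays no role, since otherwise the hypothesis $\id_k \leq \wphp{}{m}{n}$ is vacuous by Proposition \ref{prop:id_2_not_below_n^2+1}.
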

\begin{proof}
We shall use Lemma \ref{lem:id_k_le_m_iff_k_functions_no_common_soln} and Theorem \ref{thm:AoUC_k_leW_m_iff_k+1_functions_no_triangle}.
Fix functions $h_0,\dots,h_{k-1}: m \to n$ with no common solution.
Define $g: mn \to n$ by $g(x) = \lfloor x/m \rfloor$.
For each $\ell < k$,
define $f_\ell: mn \to n$ by $f_\ell(x) = h_\ell(x \text{ mod }m)$. We claim there are no $\ell \neq \ell'$ such that $g$, $f_\ell$, and $f_{\ell'}$ have a common solution.
Indeed, suppose $g(i) = g(j)$, $f_\ell(i) = f_\ell(j)$, and $f_{\ell'}(i) = f_{\ell'}(j)$.
Then $h_\ell(i\text{ mod }m) = h_\ell(j\text{ mod }m)$ and $h_{\ell'}(i\text{ mod }m) = h_{\ell'}(j\text{ mod }m)$.
Since $h_\ell$ and $h_{\ell'}$ have no common solution, $i$ and $j$ must be equal mod $m$.
Since $g(i) = g(j)$, we also have $\lfloor i/m \rfloor = \lfloor j/m \rfloor$, and so $i = j$ as desired.
\end{proof}	

By Proposition \ref{prop:n+1_eq_n+1_choose_2} and Theorem \ref{thm:latin_sq} respectively, we obtain two corollaries.

\begin{cor} \label{cor:AoUC_n+1_choose_2_below_n^2+n->n}
$\AoUC_{\binom{n+1}{2}} \leW \wphp{'}{n^2+n}{n}$ for all $n \geq 2$.
\end{cor}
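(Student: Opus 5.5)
The corollary should follow directly from Proposition \ref{prop:id_k_leW_m->n_implies_AoUC_k_leW_mn->n} applied with $m = n+1$ and $k = \binom{n+1}{2}$. We only need to supply its hypotheses. The first is $\id_{\binom{n+1}{2}} \leq \wphp{}{n+1}{n}$, which is one half of the equivalence in Proposition \ref{prop:n+1_eq_n+1_choose_2}. Concretely, the witnessing functions $h_{\{i,j\}}: n+1 \to n$ (indexed by pairs $i<j\le n$) send both $i$ and $j$ to $i$ and fix every other point. Each $h_{\{i,j\}}$ has $\{i,j\}$ as its unique solution, so no two distinct ones share a solution, as Lemma \ref{lem:id_k_le_m_iff_k_functions_no_common_soln}(1) requires.

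Next we check the numerical side conditions of Proposition \ref{prop:id_k_leW_m->n_implies_AoUC_k_leW_mn->n}. We need $k = \binom{n+1}{2} \geq 2$, which holds since $n \geq 2$ gives $k \geq 3$. We also need $n^2 \geq m = n+1 > n \geq 2$; the inequality $n^2 \geq n+1$ holds for all $n \geq 2$. The proposition then yields $\AoUC_{\binom{n+1}{2}} \leW \wphp{'}{(n+1)n}{n}$, and $(n+1)n = n^2+n$, which is exactly the claim.

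There is no real obstacle here, only bookkeeping. For completeness, the underlying construction can be stated explicitly. Take $g(x) = \lfloor x/(n+1) \rfloor$ on $n^2+n$, and set $f_{\{i,j\}}(x) = h_{\{i,j\}}(x \bmod (n+1))$. The proof of Proposition \ref{prop:id_k_leW_m->n_implies_AoUC_k_leW_mn->n} shows that no triple $g, f_\ell, f_{\ell'}$ with $\ell \neq \ell'$ has a common solution. Theorem \ref{thm:AoUC_k_leW_m_iff_k+1_functions_no_triangle} then converts this into the Weihrauch reduction.
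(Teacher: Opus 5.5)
Your proposal is correct and follows the paper's own route. The paper obtains this corollary from Proposition \ref{prop:id_k_leW_m->n_implies_AoUC_k_leW_mn->n} with $m = n+1$ and $k = \binom{n+1}{2}$, taking the hypothesis $\id_{\binom{n+1}{2}} \leq \wphp{}{n+1}{n}$ from Proposition \ref{prop:n+1_eq_n+1_choose_2}, and your side-condition checks are right. One small slip, which the paper's proof of Proposition \ref{prop:n+1_eq_n+1_choose_2} also contains: when $j < n$, ``fixing every other point'' sends $n$ to $n \notin n$, so $h_{\{i,j\}}$ is not a map into $n$. Setting $h_{\{i,j\}}(n) = j$ in that case repairs this and keeps $\{i,j\}$ as the unique solution.
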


\begin{cor} \label{cor:AoUC_3_below_n^3->n}
$\AoUC_3 \leW \wphp{'}{n^3}{n}$ for all $n \geq 2$. (For $n = 2$ this strengthens the previous corollary, but we will strengthen it further in Proposition \ref{prop:C_3_leq_8->2}.)
\end{cor}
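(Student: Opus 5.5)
The plan is to specialize Proposition \ref{prop:id_k_leW_m->n_implies_AoUC_k_leW_mn->n} with $m = n^2$ and $k = 3$. That proposition requires $n^2 \geq m > n \geq 2$, which holds for $m = n^2$ and any $n \geq 2$. Its hypothesis is then $\id_3 \leq \wphp{}{n^2}{n}$, and its conclusion is $\AoUC_3 \leW \wphp{'}{n^2 \cdot n}{n} = \wphp{'}{n^3}{n}$, which is exactly the claim.

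The hypothesis $\id_3 \leq \wphp{}{n^2}{n}$ is recorded in the ``in particular'' clause of Theorem \ref{thm:latin_sq}. To be safe, we verify it directly via Lemma \ref{lem:id_k_le_m_iff_k_functions_no_common_soln}, since it does not need any Latin squares. Take $f_1(x) = \lfloor x/n \rfloor$ and $f_2(x) = x \bmod n$, which have no common solution. For a third function, use the cyclic Latin square $f_3(x) = (\lfloor x/n \rfloor + x) \bmod n$. Its values in a fixed row (fixed $\lfloor x/n\rfloor$) are distinct, and likewise in a fixed column (fixed $x \bmod n$). So $f_3$ has no common solution with $f_1$ or with $f_2$. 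This yields $\id_3 \leq \wphp{}{n^2}{n}$ for every $n \geq 2$, including $n = 2$ and $n = 6$, where orthogonal pairs of Latin squares fail to exist but a single Latin square suffices.

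For the parenthetical remark about $n = 2$, we compare with Corollary \ref{cor:AoUC_n+1_choose_2_below_n^2+n->n}. At $n = 2$ it gives $\AoUC_3 \leW \wphp{'}{6}{2}$, while the present statement gives $\AoUC_3 \leW \wphp{'}{8}{2}$. By Proposition \ref{prop:m->n_basic_rshp} we have $\wphp{}{8}{2} \leq \wphp{}{6}{2}$, and by Theorem \ref{thm:reduction_lifting} this lifts to the jumps. Hence the reduction to $\wphp{'}{8}{2}$ is the stronger one, and that direction is all the remark needs.

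There is no real obstacle. The only point needing care is that the hypothesis of the applied proposition holds uniformly in $n \geq 2$. The cyclic Latin square handles this, so we never rely on the existence of mutually orthogonal Latin squares.
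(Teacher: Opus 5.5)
Your proof is correct and is the same as the paper's: the corollary is obtained by applying Proposition~\ref{prop:id_k_leW_m->n_implies_AoUC_k_leW_mn->n} with $m = n^2$ and $k = 3$, using $\id_3 \leq \wphp{}{n^2}{n}$ from Theorem~\ref{thm:latin_sq}. Your cyclic Latin square $f_3$ verifies that hypothesis directly (a single Latin square suffices), and your treatment of the $n = 2$ remark via Proposition~\ref{prop:m->n_basic_rshp} and Theorem~\ref{thm:reduction_lifting} is also sound.
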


To calibrate the strength of $\wphp{}{m}{n}$ for $m > n^3$, we consider the family of \emph{all-or-co-unique-choice} problems:

\begin{defn}
For each $k \geq 2$, the problem $\ACC_k$ has two types of instances $p \in \N^\N$:
Either there is some \emph{unique} $\ell < k$ which \emph{does} appear in $p$,
in which case any $j < k$ other than $\ell$ is a solution to $p$,
or $p = k^\N$,
in which case any $j < k$ is a solution to $p$.
\end{defn}

$\ACC_2$ and $\AoUC_2$ are clearly equal.
Furthermore, the problems $\ACC_k$ are strictly \emph{descending} under Weihrauch reducibility (see \cite[Fact 3.4]{bhkr17}).
Therefore
\[ \dots \sleW \ACC_3 \sleW \ACC_2 = \AoUC_2 \sleW \AoUC_3 \sleW \dots \sleW \mflim_2. \]

Again we have a combinatorial characterization akin to Lemma \ref{lem:id_k_le_m_iff_k_functions_no_common_soln} and Theorem \ref{thm:AoUC_k_leW_m_iff_k+1_functions_no_triangle}. We omit the proof, which is quite similar to that of Theorem \ref{thm:AoUC_k_leW_m_iff_k+1_functions_no_triangle}.

\begin{thm} \label{thm:ACC_k_leW_m_iff_k+1_functions_no_common_soln}
Suppose $k \geq 2$ and $m > n \geq 2$.
The following are equivalent:
\begin{enumerate}
	\item there exist  $g,f_0,\dots,f_{k-1}: m \to n$
	with no $\wphp{}{m}{n}$-solution in common;
	\item $\ACC_k \leW \wphp{'}{m}{n}$;
	\item $\ACC_k \lecW \wphp{'}{m}{n}$.
\end{enumerate}
\end{thm}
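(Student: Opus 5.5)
We mirror the proof of Theorem \ref{thm:AoUC_k_leW_m_iff_k+1_functions_no_triangle}, proving (1) $\Rightarrow$ (2) $\Rightarrow$ (3) $\Rightarrow$ (1). The middle implication is trivial.

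For (1) $\Rightarrow$ (2), fix $g,f_0,\dots,f_{k-1}: m \to n$ with no common solution. Given an $\ACC_k$-instance $p$, we build $(h_s)_s$ by setting $h_s = f_\ell$ if some $\ell < k$ appears in $p\restriction s$, where $\ell$ is the first such value, and $h_s = g$ otherwise. Since an $\ACC_k$-instance contains at most one value below $k$, this sequence changes at most once and converges. Its limit is $f_\ell$ exactly when $\ell$ is the unique value below $k$ appearing in $p$, and is $g$ when $p = k^\N$.

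Given a solution $\{i,j\}$ to $\lim_s h_s$, the backward functional $\Psi(p,\{i,j\})$ works as follows.
\begin{itemize}
\item If $\{i,j\}$ is not a solution to $g$, then the limit is some $f_\ell$. We search $p$ for the value $\ell < k$ that appears and output any $j' < k$ with $j' \neq \ell$, which is possible since $k \geq 2$.
\item If $\{i,j\}$ is a solution to $g$, then by hypothesis there is some $\ell_0 < k$ such that $\{i,j\}$ is not a solution to $f_{\ell_0}$, and this $\ell_0$ is computable from $\{i,j\}$ alone. In that case the limit is either $g$ or some $f_\ell$ with $\ell \neq \ell_0$. We output $\ell_0$. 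It is a valid solution in both cases: any value is allowed when $p = k^\N$, and otherwise the appearing value is $\ell \neq \ell_0$.
\end{itemize}
Notice that this direction is actually easier than the $\AoUC_k$ case.

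For (3) $\Rightarrow$ (1), let continuous $\Phi,\Psi$ witness the reduction, and let $g = \lim \Phi(k^\N)$. By continuity and the finiteness of the set of pairs, we fix $s$ such that for every solution $\{i,j\}$ of $g$ and every $p \succeq k^s$, $\Psi(p,\{i,j\})$ is defined and equals $\Psi(k^\N,\{i,j\})$. For each $\ell < k$, let $p_\ell = k^s \ell k^\N$, which is an $\ACC_k$-instance whose solutions are all $j' \neq \ell$, and let $f_\ell = \lim \Phi(p_\ell)$.

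Suppose $\{i,j\}$ were a common solution of $g,f_0,\dots,f_{k-1}$. Set $v = \Psi(k^\N,\{i,j\}) < k$. Since $p_v$ extends $k^s$, we get $\Psi(p_v,\{i,j\}) = v$. But $\{i,j\}$ solves $f_v = \lim\Phi(p_v)$, so correctness forces $v$ to be an $\ACC_k$-solution to $p_v$, which it is not. This contradiction gives (1).

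The main care point is the choice of $s$ via continuity. We need $\Psi(\cdot,\{i,j\})$ to stabilize on the cylinder $[k^s]$ uniformly over the finitely many solutions of $g$, exactly as in the earlier proof. With that in hand, everything else is routine.
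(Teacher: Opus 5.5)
Your proof is correct. It is essentially the proof the paper has in mind: the paper omits it and says only that it is ``quite similar'' to that of Theorem~\ref{thm:AoUC_k_leW_m_iff_k+1_functions_no_triangle}, and your adaptation (the instances $p_\ell = k^s \ell k^\N$, and outputting an $\ell_0$ for which $\{i,j\}$ does not solve $f_{\ell_0}$) is exactly that analogue.
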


\begin{prop} \label{prop:ACC_below_n^k+1}
$\ACC_k \leW \wphp{'}{n^{k+1}}{n}$ for all $k \geq 2$.
(For $k=2$, Corollary \ref{cor:AoUC_3_below_n^3->n} provides a stronger result.)
\end{prop}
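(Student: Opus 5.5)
By Theorem \ref{thm:ACC_k_leW_m_iff_k+1_functions_no_common_soln}, it suffices to exhibit $k+1$ functions $g,f_0,\dots,f_{k-1}: n^{k+1} \to n$ with no $\wphp{}{n^{k+1}}{n}$-solution common to all of them. This is the mechanism from the proof of Proposition \ref{prop:n^l+1->n_strictly_below_n^l->n}, where we noted that there are $\ell$ functions $n^\ell \to n$ with no common solution. We apply that observation with $\ell = k+1$.

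Concretely, identify each $x < n^{k+1}$ with its base-$n$ expansion $(d_0(x),\dots,d_k(x))$, where $x = \sum_{t \leq k} d_t(x) n^t$ and $0 \leq d_t(x) < n$. Set $g = d_k$ and $f_\ell = d_\ell$ for each $\ell < k$. Each of these is a function from $n^{k+1}$ to $n$.

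We check that there is no common solution. Suppose $i \neq j$ satisfy $g(i)=g(j)$ and $f_\ell(i) = f_\ell(j)$ for every $\ell < k$. Then all $k+1$ base-$n$ digits of $i$ and $j$ agree. Since the map $x \mapsto (d_0(x),\dots,d_k(x))$ is a bijection from $n^{k+1}$ onto $n^{k+1}$, this forces $i = j$, a contradiction.

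Hence condition (1) of Theorem \ref{thm:ACC_k_leW_m_iff_k+1_functions_no_common_soln} holds with $m = n^{k+1}$, and that theorem (applicable because $n^{k+1} > n \geq 2$ and $k \geq 2$) yields $\ACC_k \leW \wphp{'}{n^{k+1}}{n}$. The only real content is the cited characterization, whose proof the paper omits. I would expect it to follow the pattern of Theorem \ref{thm:AoUC_k_leW_m_iff_k+1_functions_no_triangle}, with $\Phi$ outputting $f_\ell$ once $\ell$ appears and $g$ otherwise. Taking that theorem as given, the construction above involves no real obstacle.
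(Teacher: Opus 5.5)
Your proposal is correct and matches the paper's proof: both apply Theorem~\ref{thm:ACC_k_leW_m_iff_k+1_functions_no_common_soln} to the $k+1$ base-$n$ digit functions on $n^{k+1}$, which is exactly the fact reused from the proof of Proposition~\ref{prop:n^l+1->n_strictly_below_n^l->n}. Your guess at the omitted forward map ($g$ until some $\ell<k$ appears, then $f_\ell$) also works. For the backward map, given $\{i,j\}$, output any $\ell$ for which $\{i,j\}$ is not a solution of $f_\ell$; if there is none, $\{i,j\}$ is not a solution of $g$, so you can search the instance for the $\ell$ that appears and output something else.
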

\begin{proof}
Use Theorem \ref{thm:ACC_k_leW_m_iff_k+1_functions_no_common_soln} and the fact
(already mentioned in the proof of Proposition \ref{prop:n^l+1->n_strictly_below_n^l->n})
that there are $k+1$ many functions $f_0,\dots,f_k: n^{k+1} \to n$ such that no pair $i \neq j$ is a solution to all of $f_0,\dots,f_{k}$.
\end{proof}

On the other hand:

\begin{prop} \label{prop:ACC_not_below_n^(k+1)+1}
$\ACC_k \not\lecW \wphp{'}{n^{k+1}+1}{n}$ for all $k \geq 2$.
\end{prop}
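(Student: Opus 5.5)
We argue via the combinatorial characterization in Theorem \ref{thm:ACC_k_leW_m_iff_k+1_functions_no_common_soln}. By the equivalence of its items (1) and (3), $\ACC_k \lecW \wphp{'}{m}{n}$ holds exactly when there are $k+1$ functions $g,f_0,\dots,f_{k-1}: m \to n$ having no $\wphp{}{m}{n}$-solution in common. So it suffices to show that when $m = n^{k+1}+1$, any $k+1$ functions $m \to n$ share a common solution.

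This is the second key fact from the proof of Proposition \ref{prop:n^l+1->n_strictly_below_n^l->n}, now with $\ell = k+1$. Given $g,f_0,\dots,f_{k-1}: n^{k+1}+1 \to n$, consider the map
\[ x \mapsto (g(x),f_0(x),\dots,f_{k-1}(x)) \]
from $n^{k+1}+1$ into $n^{k+1}$. The domain has more elements than the codomain, so by the pigeonhole principle there are $i \neq j$ with the same image. Then $\{i,j\}$ is a solution to each of $g,f_0,\dots,f_{k-1}$. Hence item (1) of Theorem \ref{thm:ACC_k_leW_m_iff_k+1_functions_no_common_soln} fails, and therefore item (3) fails as well.

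There is no real obstacle, since the argument only needs the implication (3)$\Rightarrow$(1) of Theorem \ref{thm:ACC_k_leW_m_iff_k+1_functions_no_common_soln}, whose proof the paper omits. If a self-contained argument is wanted, we would mimic the proof of (3)$\Rightarrow$(1) in Theorem \ref{thm:AoUC_k_leW_m_iff_k+1_functions_no_triangle}. Let $g$ be the limit of $\Phi(k^\N)$, and choose $s$ so that $\Psi(p,\{i,j\})$ is already determined by $k^s$ for all solutions $\{i,j\}$ of $g$. For each $\ell < k$, let $f_\ell$ be the limit of $\Phi(k^s \ell k^\N)$.

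Now suppose $\{i,j\}$ were a common solution of all these functions. Then $\Psi$ applied to it would output a single value $j_0 < k$, and this value would have to be a valid solution for every instance $k^s \ell k^\N$. It is not valid for the instance with $\ell = j_0$, a contradiction.
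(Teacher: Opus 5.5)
Your proof is correct and is exactly the paper's argument: apply the (3)$\Rightarrow$(1) direction of Theorem \ref{thm:ACC_k_leW_m_iff_k+1_functions_no_common_soln} together with the pigeonhole fact that any $k+1$ functions $n^{k+1}+1 \to n$ share a common solution. Your supplementary sketch of (3)$\Rightarrow$(1), which uses $g = \lim \Phi(k^\N)$ and $f_\ell = \lim \Phi(k^s \ell k^\N)$, is also sound and supplies the proof the paper omits.
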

\begin{proof}
Use Theorem \ref{thm:ACC_k_leW_m_iff_k+1_functions_no_common_soln} and the fact
(already mentioned in the proof of Proposition \ref{prop:n^l+1->n_strictly_below_n^l->n})
that given $k+1$ many functions $f_0,\dots,f_k: n^{k+1}+1 \to n$, there is some pair $i \neq j$ which is a solution to all of $f_0,\dots,f_{k}$.
\end{proof}

We now have another proof of Proposition \ref{prop:n^l+1->n_strictly_below_n^l->n}:
Combine Propositions \ref{prop:AoUC_not_below_n^2+1->n}, \ref{prop:ACC_below_n^k+1}, \ref{prop:ACC_not_below_n^(k+1)+1} and Theorem \ref{thm:reduction_lifting}.
While this proof is longer, it has the advantage of being modular;
instead of directly separating $\wphp{'}{n^{k+1}}{n}$ and $\wphp{'}{n^{k+1}+1}{n}$,
we have identified a natural problem ($\ACC_k$) which reduces to the former but not the latter.
We summarize most of this section's results in Figure \ref{fig:m->n_jumps}.

\begin{figure}[ptb]
\centering
\begin{tikzpicture}[scale=1,every node/.style={inner sep=3pt,fill=white}]
	\node [anchor=east] (acc4) at (0,0) {$\ACC_4$};
	\draw [dashed,color=gray] (acc4) -- (2,0);
	\node [anchor=east] (acc3) at (0,1.1) {$\ACC_3$};
	\draw [dashed,color=gray] (acc3) -- (2,1.1);
	\node [anchor=east] (acc2) at (0,2.3) {$\AoUC_2 = \ACC_2$};
	\draw [dashed,color=gray] (acc2) -- (2,2.3);
	\node [anchor=east] (auc_n+1_choose_2) at (0,2.8) {$\AoUC_{\binom{n+1}{2}}$};
	\draw [dashed,color=gray] (auc_n+1_choose_2) -- (2,2.8);
	\node [anchor=east] (auc_n+1_choose_2+1) at (0,3.9) {$\AoUC_{\binom{n+1}{2}+1}$};
	\draw [dashed,color=gray] (auc_n+1_choose_2+1) -- (2,3.9);
	\node [anchor=east] (lim2) at (0,4.5) {$\mflim_2$};
	\node [anchor=east] (lim3) at (0,5) {$\mflim_3$};
	\draw [dashed,color=gray] (lim3) -- (2,5);
	\node [anchor=west] (n5) at (0.5,0.25) {$n^5$};
	\node [anchor=west] (n4+1) at (0.5,0.75) {$n^4+1$};
	\node [anchor=west] (n4) at (0.5,1.5) {$n^4$};
	\node [anchor=west] (n3+1) at (0.5,2) {$n^3+1$};
	\node [anchor=west] (n2+n) at (0.5,3.1) {$n^2+n$};
	\node [anchor=west] (n2+1) at (0.5,3.6) {$n^2+1$};
	\node [anchor=west] (n2) at (0.5,5.3) {$n^2$};
\end{tikzpicture}
\caption{We describe the strength of $\wphp{'}{m}{n}$ for $m$ between $n^2$ and $n^5$. A node $m$ lies above the dotted line passing through $\ACC_k$/$\AoUC_k$/etc.\
if and only if the latter is Weihrauch reducible to $\wphp{'}{m}{n}$,
according to Theorems \ref{thm:latin_sq},
\ref{thm:reduction_lifting},
Proposition \ref{prop:AoUC_not_below_n^2+1->n},
Corollary \ref{cor:AoUC_n+1_choose_2_below_n^2+n->n},
Propositions \ref{prop:ACC_below_n^k+1} and
\ref{prop:ACC_not_below_n^(k+1)+1}.
}
\label{fig:m->n_jumps}
\end{figure}
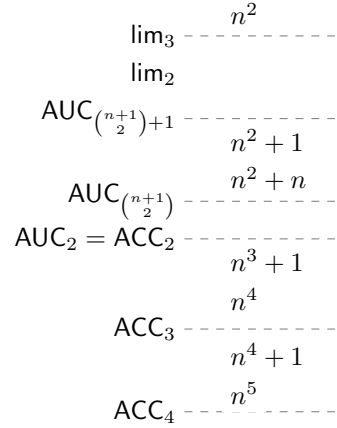

It is natural to wonder if one can carry out analogous reasoning for $\wphp{}{m}{n}$ instead of $\wphp{'}{m}{n}$.
For example, if $\mfP$ is a problem such that $\mfP' \eqW \ACC_k$,
then we could perhaps use $\mfP$ to separate $\wphp{}{n^{k+1}}{n}$ and $\wphp{}{n^{k+1}+1}{n}$,
without considering their jumps.
This is not possible by results of \cite{bgm12,lrp15},
as we explain below.
To this end we consider the family of \emph{choice} problems:

\begin{defn}
For each $k \geq 2$, instances of the problem $\C_k$ are sequences $p \in \N^\N$ so that at least one $\ell < k$ does not appear in $p$, and a solution to $p$ is any such $\ell$.
\end{defn}

The problems $\C_k$ are strictly ascending under Weihrauch reducibility (see \cite[Proposition 7.18]{bgp21}).
It is clear that $\AoUC_k \leW \C_k$ for all $k \geq 2$.
Equality holds for $k = 2$,
while for $k \geq 3$,
$\AoUC_k \sleW \C_k$ because $\AoUC_k \leW \mflim_2$ while $\C_k \not\leW \mflim_2$, as is easily seen (see remarks before Proposition \ref{prop:C_3_leq_8->2}).

Say a problem $\mathsf{P}$ with instances and solutions in $\mathbb{N}^\N$ is \emph{computable} if $\mathsf{P}$ is Weihrauch reducible to the identity problem on $\N^\N$ (i.e., the problem having all $p \in \N^\N$ as instances, and each such $p$ having itself as its unique solution).

\begin{prop}
If $\mfP' \leW \C_k$ for some $k \geq 2$, then $\mfP'$ is computable.
\end{prop}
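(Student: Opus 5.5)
The plan is to exploit the fact that a $\mfP'$-instance can be altered on any finite initial segment without changing its limit, and hence without changing its set of solutions. We then find one finite "forcing" prefix beyond which the reduction to $\C_k$ can never enumerate anything new. Fix Turing functionals $\Phi,\Psi$ witnessing $\mfP' \leW \C_k$. Recall that a $\mfP'$-instance is a convergent sequence $(x_s)_{s\in\N}$ of $\mfP$-instances, and its solutions are the $\mfP$-solutions to $\lim_s x_s$.

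First I will record the basic observation. Every finite sequence $\tau$ of $\mfP$-instances is an initial segment of some $\mfP'$-instance, for example $\tau x^\N$ for any $\mfP$-instance $x$; such an $x$ exists since $\operatorname{dom}(\mfP)\neq\emptyset$. Moreover, for any $\mfP'$-instance $q$ the concatenation $\tau q$ is again a $\mfP'$-instance with the same limit, so it has exactly the same solutions as $q$. For a finite $\tau$ of this kind, let $E(\tau)\subseteq k$ be the set of numbers $<k$ appearing in the finite output $\Phi(\tau)$. The map $\tau\mapsto E(\tau)$ is monotone under extension. Also $|E(\tau)|<k$, because $\Phi(\tau x^\N)$ must be a $\C_k$-instance, so some $\ell<k$ never appears in it.

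Next, choose a finite sequence $\tau^*$ of $\mfP$-instances with $|E(\tau^*)|$ maximal among all such finite sequences; this is possible since the values are bounded by $k-1$. Fix some $\ell^*<k$ with $\ell^*\notin E(\tau^*)$. The key claim is that for every $\mfP'$-instance $q$, no number $\ell<k$ outside $E(\tau^*)$ ever appears in $\Phi(\tau^* q)$. To see this, suppose some such $\ell$ does appear. Then it already appears in $\Phi(\rho)$ for some finite initial segment $\rho$ of $\tau^* q$ extending $\tau^*$. By monotonicity $E(\rho)\supsetneq E(\tau^*)$, contradicting the maximality of $\tau^*$. Here we use that $\rho$ is a finite sequence of $\mfP$-instances, so it lies in the range over which we maximized. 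Hence $\ell^*$ is a $\C_k$-solution to $\Phi(\tau^* q)$. By correctness of the reduction, $\Psi(\tau^* q,\ell^*)$ is a $\mfP'$-solution to $\tau^* q$, and so it is a solution to $q$.

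Finally, the map $q\mapsto \Psi(\tau^* q,\ell^*)$ is computable uniformly in $q$, because $\tau^*$ and $\ell^*$ are fixed finite parameters. This witnesses $\mfP'\leW\id$, i.e., $\mfP'$ is computable. The choice of $\tau^*$ is non-effective, but that is harmless since it is made once, outside the reduction. I expect the main point requiring care to be the maximality step: one must check that the finite prefixes over which $E$ is maximized are exactly the ones arising as initial segments of the perturbed instances $\tau^* q$. This is where the concrete coding of $\mfP'$-instances as sequences of $\mfP$-instances, with arbitrary finite prefixes allowed, is used.
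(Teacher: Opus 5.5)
Your central idea is the right one: find a prefix after which $\Phi$ never enumerates a new number below $k$, so that one fixed $\ell^*$ is a $\C_k$-solution for every instance extending that prefix. The gap is that your prefix $\tau^*$ is not a finite object. A $\mfP'$-instance is coded as a single element $\langle p_0,p_1,\dots\rangle$ of $\N^\N$. Its finite initial segments are finite strings fixing finitely many values $p_s(n)$, and they never contain an entire $p_s\in\N^\N$. Your $\tau^*$ is a finite tuple of elements of $\N^\N$. So ``the finite output $\Phi(\tau)$'' only makes sense via oracle queries, and then it need not be finite. More seriously, the final map $q\mapsto\Psi(\tau^*q,\ell^*)$ is computable only relative to $\tau^*$, and that oracle may be noncomputable. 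Under your reading, every $p_s$ must be a $\mfP$-instance, so if $\mfP$ has no computable instances then no nonempty $\tau$ in your maximization range is computable. Your sentence ``the choice of $\tau^*$ is non-effective, but that is harmless'' is therefore exactly where the proof fails. As written, you only get that $\mfP'$ is continuously Weihrauch reducible to the identity problem on $\N^\N$, not that it is computable.

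The repair is to maximize over genuine finite strings $w\in\N^{<\N}$. This uses the paper's Definition~\ref{defn:jump}, in which only the limit has to be a $\mfP$-instance, not each $p_s$; this is weaker than your recollection. For a $\mfP'$-instance $q$, let $w\star q$ be $q$ with its first $|w|$ values replaced by $w$. For each $n$ only finitely many of the values $p_s(n)$ change, so $w\star q$ converges pointwise to the same limit. It is therefore a $\mfP'$-instance extending $w$ with the same solutions as $q$, and $q\mapsto w\star q$ is computable. Now let $E(w)$ be the set of numbers below $k$ occurring in the finite string $\Phi(w)$. Monotonicity, the bound $|E(w)|<k$ (look at $\Phi(w\star q_0)$ for any instance $q_0$), and your maximality argument go through verbatim. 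For a maximizer $w_0$, the map $q\mapsto\Psi(w_0\star q,\ell^*)$ is a Turing functional because $w_0$ is finite.

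Alternatively, you could keep tuples but replace $\tau^*$ by a computable tuple agreeing with it on the finitely many oracle bits used to enumerate $E(\tau^*)$. By finite use this is still a maximizer, but it too needs entries that need not be $\mfP$-instances. Once repaired, your argument is a self-contained unpacking of the paper's two-line proof. The overwriting step is the fact that jumps are fractals \cite{bgm12}, and your maximality argument does the job of the cited theorem that fractals Weihrauch reducible to $\C_k$ are computable \cite{lrp15}.
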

\begin{proof}
Every jump $\mfP'$ is a fractal \cite[Definition 2.5, Proposition 5.8]{bgm12}
yet every fractal which is Weihrauch reducible to $\C_k$ is computable \cite[Theorem 2.3]{lrp15}.
\end{proof}

Since $\ACC_k$ and $\AoUC_k$ are not computable but are Weihrauch reducible to $\C_k$,
it follows that they are not equivalent to the jump of any problem.
(It also follows that the reductions in
Corollaries \ref{cor:AoUC_n+1_choose_2_below_n^2+n->n},
\ref{cor:AoUC_3_below_n^3->n} and
Proposition \ref{prop:ACC_below_n^k+1}
are strict.)

We give a combinatorial characterization of when $\C_k$ is reducible to $\wphp{'}{m}{n}$:

\begin{thm} \label{thm:C_k_leW_m_iff_functions_indexed_by_injections}
The following properties are equivalent:
\begin{enumerate}
	\item There are functions $f_\sigma: m \to n$ ($\sigma$ ranging over injections from a proper initial segment of $k$ to $k$ itself)
such that if $\{i,j\}$ is a solution of $f_\sigma$,
there exists $\ell < k$ such that if $\tau$ extends $\sigma$
and $\{i,j\}$ is a solution of $f_\tau$,
then $\ell$ is not in the range of $\tau$;
\item $\C_k \leW \wphp{'}{m}{n}$;
	\item $\C_k \lecW \wphp{'}{m}{n}$.
\end{enumerate}
\end{thm}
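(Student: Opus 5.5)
The plan follows the proof of Theorem \ref{thm:AoUC_k_leW_m_iff_k+1_functions_no_triangle}, with the single function $g$ (indexed by the all-$k$ instance) replaced by a tree of functions indexed by injections $\sigma$. The intended reading is that $\sigma$ lists, in order of first appearance, the numbers below $k$ that have appeared so far in the $\C_k$-instance. Since $\sigma$ ranges over injections from a proper initial segment of $k$, it never lists all of $k$, which matches the promise that some $\ell<k$ is missing.

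\textbf{(1) implies (2).} Given $p$, at stage $s$ let $\sigma_s$ be the injection enumerating the distinct values below $k$ occurring in $p\restriction s$, in order of first occurrence, and set $h_s = f_{\sigma_s}$. Since $\sigma_s$ only grows and is bounded in length by $k-1$, it stabilizes at some $\sigma_\infty$. Hence $\lim_s h_s = f_{\sigma_\infty}$, and $\operatorname{ran}\sigma_\infty$ is exactly the set of values below $k$ occurring in $p$.

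Now suppose we are given a solution $\{i,j\}$ together with $p$. For each $s$ we can check whether $\{i,j\}$ solves $f_{\sigma_s}$ (a finite check). The key point is that some stage $s$ has $\sigma_s=\sigma_\infty$, and there $\{i,j\}$ does solve $f_{\sigma_s}$. So we search for the first $s$ with $\{i,j\}$ a solution of $f_{\sigma_s}$, and output the $\ell$ given by (1) for $\sigma=\sigma_s$; we fix a choice of such $\ell$ for every pair and every $\sigma$ in advance, which is a finite table. Because $\sigma_\infty$ extends $\sigma_s$ and $\{i,j\}$ solves $f_{\sigma_\infty}$, clause (1) gives $\ell\notin\operatorname{ran}\sigma_\infty$, so $\ell$ does not occur in $p$ and is a valid solution.

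\textbf{(2) implies (3)} is trivial.

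\textbf{(3) implies (1).} This is the main obstacle. The plan is to build, by recursion on $\sigma$, finite strings $\rho_\sigma\in\N^{<\N}$ with $\rho_\tau$ extending $\rho_\sigma$ whenever $\tau$ extends $\sigma$, and with $\rho_\sigma$ containing exactly the values in $\operatorname{ran}\sigma$ among those below $k$.
\begin{itemize}
\item Let $p_\sigma=\rho_\sigma k^\N$, a legitimate $\C_k$-instance because $\sigma$ is not onto $k$.
\item Define $f_\sigma=\lim \Phi(p_\sigma)$.
\item By continuity of $\Psi$, and finiteness of the set of pairs, choose $\rho_\sigma$ long enough that for every solution $\{i,j\}$ of $f_\sigma$, $\Psi(q,\{i,j\})$ is defined and equal to a fixed $\ell_{\sigma,\{i,j\}}$ for all $q$ extending $\rho_\sigma$.
\end{itemize}
The difficulty is that $f_\sigma$ depends on $p_\sigma$, so $\rho_\sigma$ must first be fixed as some prefix. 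The fix is to define $p_\sigma$ using a preliminary prefix $\rho'_\sigma$ (namely $\rho_{\sigma^-}$ followed by the new value $\sigma(|\sigma|-1)$), and then extend it by $k$'s to $\rho_\sigma$. This does not change $p_\sigma$, so $f_\sigma$ is unaffected. Children $\tau$ then use $\rho'_\tau=\rho_\sigma\,\tau(|\sigma|)$.

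\textbf{Verification.} Suppose $\tau$ extends $\sigma$ and $\{i,j\}$ solves both $f_\sigma$ and $f_\tau$. Then $p_\tau$ extends $\rho_\sigma$, so $\Psi(p_\tau,\{i,j\})=\ell_{\sigma,\{i,j\}}$. By correctness of $\Psi$, this value does not appear in $p_\tau$, so it is not in $\operatorname{ran}\tau$. This gives (1) with $\ell=\ell_{\sigma,\{i,j\}}$.
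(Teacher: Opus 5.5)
Your proposal is correct and follows the paper's proof essentially step for step. The forward reduction tracks the injection of values below $k$ seen so far and searches for a stage whose $f_\sigma$ has $\{i,j\}$ as a solution, and the converse builds $p_\sigma$ recursively, cutting it at a prefix long enough to fix $\Psi$ on every solution of $f_\sigma$ before appending the child's new value. Your $\rho'_\sigma/\rho_\sigma$ split is only a notational variant of the paper's choice of the shortest initial segment $p_\sigma\restriction s$ with the same range as $p_\sigma$.
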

\begin{proof}
We show that (1) implies (2), and that (3) implies (1). That (2) implies (3) is clear. So suppose we have such functions $f_\sigma: m \to n$ as in (1). We define Turing functionals $\Phi$ and $\Psi$ to witness that $\C_k \leW \wphp{'}{m}{n}$. Fix a $\C_k$-instance $p \in \N^\N$. For $s \in \mathbb{N}$, define $\sigma_{p \restriction s} : |\operatorname{ran}(p \restriction s)| \to k$ to be the map taking each $i < |\operatorname{ran}(p \restriction s)|$ to the $i$th element of the range of $p \restriction s$ in the order it appears there, ignoring all numbers $\geq k$. Thus, $\sigma_{p \restriction s}$ is an injection from a proper initial segment of $k$ to $k$ itself, and if $t \geq s$ then $\sigma_{p \restriction t}$ is equal to $\sigma_{p \restriction s}$ or extends it as a finite function. Let $\sigma_p = \lim_s \sigma_{p \restriction s}$, and note that for all $s$, $\operatorname{ran}(\sigma_{p \restriction s}) \subseteq \operatorname{ran}(\sigma_p) = \operatorname{ran}(p) \cap k$. Define $\Phi(p) = (f_{\sigma_{p \restriction s}})_s$, which is an $\wphp{'}{m}{n}$-instance since $f_{\sigma_{p \restriction s}} = f_{\sigma_p} : m \to n$ for all sufficiently large $s$. 

Now suppose $\{i,j\}$ is any $\wphp{'}{m}{n}$-solution to $(f_{\sigma_{p \restriction s}})_s$, or equivalently, any $\wphp{}{m}{n}$-solution to $f_{\sigma_p}$. We define $\Psi(p,\{i,j\})$ as follows. First, use $p$ to enumerate $\sigma_{p\restriction s}$ until we find some $s$
such that $f_{\sigma_{p\restriction s}}$ has $\{i,j\}$ as a solution. (Such an $s$ exists since, in particular, any $s$ so that $f_{\sigma_{p \restriction s}} = f_{\sigma_p}$ will do.) Then define $\Psi(p,\{i,j\})$ to be the least $\ell < k$ such that
if $\tau$ extends $\sigma_{p\restriction s}$ and $\{i,j\}$ is a solution of $f_\tau$,
then $\ell$ is not in the range of $f_\tau$.
(Such $\ell$ exists by assumption.
Given $\{i,j\}$ and $p\restriction s$, since there are only finitely many extensions of $\sigma_{p \restriction s}$,  we can computably check each $\ell < k$ to see if satisfies the above condition, and thereby eventually find $\ell$ computably.) Note that $\sigma_p$ extends (or is equal to) $\sigma_{p\restriction s}$.
So $\ell$ is not in the range of $\sigma_p$, and hence also not in the range of $p$.
In other words, $\ell$ is a $\C_k$-solution to $p$,
completing the construction of a reduction from $\C_k$ to $\wphp{'}{m}{n}$.

Next, assume (3). Let $\Phi$ and $\Psi$ be continuous functionals witnessing that $\C_k \lecW \wphp{'}{m}{n}$. For each injection $\sigma$ from a proper initial segment of $k$ to $k$ itself,
we shall define (inductively) a $\C_k$-instance $p_\sigma \in \N^\N$ with solution set $k \setminus \operatorname{ran}(\sigma)$ and a function $f_\sigma: m \to n$ as follows.

For the empty injection $\emptyset$,
define $p_\emptyset = k^\N$, which is a $\C_k$-instance with solution set $k$.
Whenever $p_\sigma$ has been defined,
define $f_\sigma: m \to n$ to be the limit of $\Phi(p_\sigma)$.
Having defined $p_\sigma$ and $f_\sigma$,
assuming that more than one number $\ell < k$ does not appear in $p_\sigma$,
consider each such $\ell$.
(If only one such $\ell$ exists,
we cannot extend $\sigma$ to an injection from a proper initial segment of $k$ to $k$ itself,
so this branch of the construction ends.)
Let $\sigma \ell$ denote the injection which extends $\sigma$ by mapping the least number not in the domain of $\sigma$ to $\ell$.
Define a $\C_k$-instance $p_{\sigma \ell}$ as follows.
Since $\Psi$ is a continuous map to a discrete space and $f_\sigma$ has only finitely many solutions,
fix the shortest initial segment $p_\sigma\restriction s$
(with range equal that of $p_\sigma$)
such that for every solution $\{i,j\}$ of $f_\sigma$ and every $q \in \N^\N$ extending $p_\sigma\restriction s$,
$\Psi(q,\{i,j\})$ is defined and equals $\Psi(p_\sigma,\{i,j\})$.
Define $p_{\sigma \ell} = (p_\sigma \restriction s)\ell k^\N$.
By assumption on $\sigma$, $p_\sigma$, and $\ell$,
$p_{\sigma \ell}$ is a $\C_k$-instance with solution set $k \setminus \operatorname{ran}(\sigma\ell)$.
As mentioned above we define $f_{\sigma\ell} = \lim \Phi(p_{\sigma \ell})$.
This completes the construction of all $p_\sigma$ and $f_\sigma$.

Suppose $\{i,j\}$ is a solution of $f_\sigma$.
Since $f_\sigma = \lim \Phi(p_\sigma)$,
$\Psi(p_\sigma,\{i,j\})$ must be a $\C_k$-solution for $p_\sigma$.
Denote it by $\ell$.
We claim that if $\tau$ extends $\sigma$ and $\{i,j\}$ is a solution of $f_\tau$,
then $\ell$ is not in the range of $\tau$.
Since $\{i,j\}$ is a solution of $f_\tau = \lim \Phi(p_\tau)$,
$\Psi(p_\tau,\{i,j\})$ is a $\C_k$-solution of $p_\tau$.
Since $\tau$ extends $\sigma$ and $\{i,j\}$ is a solution of $f_\sigma$,
$\Psi(p_\tau,\{i,j\})$ is defined and equals $\ell$ (by construction of $p_\tau$).
We conclude that $\ell$ is a $\C_k$-solution of $p_\tau$,
i.e., $\ell$ is not in the range of $\tau$.
\end{proof}

In Section \ref{section:adhoc}, we shall use Theorem \ref{thm:C_k_leW_m_iff_functions_indexed_by_injections} to strengthen Corollary \ref{cor:AoUC_3_below_n^3->n} for $n = 2$ (by replacing $\AoUC_3$ with the stronger $\C_3$).

\section{Lifting reductions}

In this section we prove Theorem \ref{thm:reduction_lifting},
which largely consists of combining results from the literature.
In order to state said results,
we first need to define the jump of an arbitrary problem.

\begin{defn} \label{defn:jump}
Given a problem $\mfP$ whose instances and solutions are elements of $\N^\N$,
its \emph{jump} $\mfP'$ is the problem whose instances are sequences $(p_s)_{s \in \N}$ in $\N^\N$ which converge to some $\mfP$-instance,
with solutions being $\mfP$-solutions to $\lim_s p_s$.
\end{defn}

There is an important distinction between this definition and the specific jump problems defined in the previous section. Namely, in a $\mfP'$-instance $(p_s)_{s \in \N}$, it \emph{need not} be the case that {each} $p_s$ is itself a $\mfP$-instance; this only has to be true of the limit. This presents an apparent conflict of notation. However, for all the problems we have considered and will consider in this paper, the two definitions are equivalent up to strong Weihrauch reducibility. Thus, for all our intents and purposes we can treat them as being the same. In particular, we have:
\begin{itemize}
    \item $\id'_k \eqsW \mflim_k$;
    \item the jump of $\wphp{}{m}{n}$ in the sense of Definition \ref{defn:jump} is strongly Weihrauch equivalent to $\wphp{'}{m}{n}$ in the sense of Definition \ref{defn:wphp_jump}.
\end{itemize}
This also justifies our comment above Definition \ref{defn:lim_k}, that $\mflim_k$ is the jump of $\id_k$.

Our proof of Theorem \ref{thm:reduction_lifting} will use the following results relating $\leW$, $\lesW$, $\lecsW$ and jumps.
First, strong Weihrauch reductions lift to jumps:

\begin{prop}[{Brattka, Gherardi, Marcone \cite[Proposition 5.6]{bgm12}}] \label{prop:jump_sW}
If $\mfP \lesW \mfQ$, then $\mfP' \lesW \mfQ'$.
\end{prop}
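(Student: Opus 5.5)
We need to prove: if $\mfP \lesW \mfQ$ via Turing functionals $\Phi,\Psi$, then $\mfP' \lesW \mfQ'$. The plan is to keep the backward functional $\Psi$ unchanged, since solutions to $\mfP'$ and $\mfQ'$-instances are exactly the solutions of the limits, and to build a new forward functional $\Phi'$ that maps a convergent sequence $(p_s)_s$ to a sequence $(q_s)_s$ converging to $\Phi(\lim_s p_s)$. The naive choice $q_s = \Phi(p_s)$ fails: $p_s$ need not be in the domain of $\Phi$, and $\Phi(p_s)$ may be partial. So $q_s$ will instead be built from finite computations.

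\textbf{Construction.} Given $(p_s)_s$ and a stage $s$, let $\sigma_s$ be the longest initial segment of length at most $s$ with the following property: for every $t$ with $s' \le t \le s$, where $s'$ is some fixed lookback window, the strings $p_t\restriction|\sigma_s|$ all agree. Then run $\Phi$ for $s$ steps on $\sigma_s$, producing a finite string $\tau_s$. Define $q_s = \tau_s 0^\N$, padding with zeros.

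A cleaner formulation, which I will adopt, is to run $\Phi$ for $s$ steps on $p_s \restriction s$ and set
\[ q_s(n) = \Phi_s(p_s\restriction s)(n) \]
if that computation converges, and $0$ otherwise. This is a computable, uniform procedure on the sequence $(p_s)_s$, so $\Phi'$ is a Turing functional.

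\textbf{Convergence.} Let $p=\lim_s p_s$ and fix $n$. The value $\Phi(p)(n)$ converges using some finite use $u$ within some number of steps $t_0$. Pointwise convergence of $p_s$ gives a stage $s_0$ such that for $s\ge s_0$ we have $p_s\restriction u = p\restriction u$. Then for all $s \ge \max(s_0,t_0,u)$, the computation $\Phi_s(p_s\restriction s)(n)$ converges and equals $\Phi(p)(n)$, because the use is unchanged and Turing functionals are consistent on extensions of the oracle string. Hence $\lim_s q_s = \Phi(p)$ pointwise, and this limit is a $\mfQ$-instance.

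\textbf{Solutions.} Any $\mfQ'$-solution to $(q_s)_s$ is a $\mfQ$-solution $q$ to $\Phi(p)$, so $\Psi(q)$ is a $\mfP$-solution to $p$, which is by definition a $\mfP'$-solution to $(p_s)_s$.

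The only delicate step is the convergence argument: one must treat $\Phi$ as acting on finite strings monotonically and with finite use, so that the stage-$s$ approximation stabilizes. This follows directly from the definition of a Turing functional given in the paper.
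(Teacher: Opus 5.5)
Your proof is correct, and it is essentially the standard argument behind the cited result of Brattka, Gherardi, and Marcone (the paper gives no proof of its own): you keep $\Psi$ unchanged and replace $\Phi$ by the computable map sending $(p_s)_s$ to the stagewise simulations $q_s$ of $\Phi$ on $p_s\restriction s$, which gives $\lim_s q_s = \Phi(\lim_s p_s)$. Your first ``lookback window'' construction is never used and can simply be deleted.
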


The converse is true for \emph{continuous} strong Weihrauch reductions:

\begin{thm}[{Brattka, Gherardi, Pauly \cite[Theorem 6.11]{bgp21}, following Brattka, H\"olzl, Kuyper \cite[Theorem 11]{bhku17}}] \label{thm:jump_inversion}
If $\mfP \not\lecsW \mfQ$, then $\mfP' \not\lecsW \mfQ'$.
\end{thm}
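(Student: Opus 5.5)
We argue by contraposition. Suppose $\mfP' \lecsW \mfQ'$ via continuous $\Phi,\Psi$; we want continuous $\Phi_0,\Psi_0$ witnessing $\mfP \lecsW \mfQ$. The backward functional is free: a $\mfQ'$-solution to $(q_t)_t$ is just a $\mfQ$-solution to $\lim_t q_t$, and a $\mfP'$-solution to $(p_s)_s$ is just a $\mfP$-solution to $\lim_s p_s$. So we take $\Psi_0 = \Psi$, and everything reduces to the forward direction. The task is to turn the map ``$\mfP$-instance $p$ $\mapsto$ a sequence converging to a $\mfQ$-instance'' into a \emph{continuous} map ``$p \mapsto$ a $\mfQ$-instance $q$''. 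We need this $q$ to have the property that every $\mfQ$-solution of $q$ is sent by $\Psi$ to a $\mfP$-solution of $p$.

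The natural first attempt is to feed $\Phi$ a canonical approximation to $p$, e.g.\ $x_s = (p\restriction s)0^\N$ or the constant sequence $(p,p,\dots)$, and output $\lim_t \Phi((x_s)_s)_t$. This fails because the limit is not continuous. The remedy is to exploit the freedom in the \emph{input}: $\Phi$ must behave correctly on \emph{every} sequence converging to $p$, including ones that change their early terms arbitrarily. We therefore replace ``take the limit of the output'' by a construction that \emph{forces} the output to settle.

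Reading $p$ progressively, we build a finite sequence of finite strings $\sigma_0,\sigma_1,\dots$, with the intended input sequence beginning with these, followed by terms agreeing ever longer with $p$. At each stage we search, continuously in the finite part of $p$ read so far, for an extension of the current input condition. The extension should fix the next coordinate of some output term $q_t$ and also guarantee that all later terms $q_{t'}$ agree with $q_t$ on that coordinate. If no such ``forcing'' extension exists, we must arrange the input so that the output oscillates, contradicting that $\Phi$ maps $\mfP'$-instances to \emph{convergent} sequences. The diagonal output, i.e.\ the sequence of forced coordinates, is then a continuous function $\Phi_0(p)$. It should equal $\lim_t$ of $\Phi$ applied to a genuine $\mfP'$-instance with limit $p$, namely the input sequence we built. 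Hence $\Phi_0(p)$ is a $\mfQ$-instance, and $\Psi$ maps its solutions to solutions of $p$.

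The main obstacle is exactly this forcing step. We need that the search for a stabilizing extension always succeeds, and that it can be made to depend on only finitely much of $p$, so that $\Phi_0$ is continuous. The search also has to be compatible with the constructed input sequence actually converging to $p$. This is where I would follow the Brattka--H\"olzl--Kuyper argument, which is a Baire-category (or finite-extension) construction on the space of input sequences, rather than try to be clever. The other steps are routine bookkeeping.
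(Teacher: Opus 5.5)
\textbf{There is a genuine gap: the step you defer is the whole theorem.} The paper gives no proof of its own; it quotes the result from Brattka--Gherardi--Pauly, after Brattka--H\"olzl--Kuyper. Your reduction is sound. Since a $\mfQ'$-solution is just a $\mfQ$-solution of the limit, $\Psi_0=\Psi$ works once $\Phi_0(p)=\lim_t\Phi(y_p)_t$ for some $y_p$ converging to $p$. The only task is then to make $p\mapsto\Phi_0(p)$ continuous on $\operatorname{dom}(\mfP)$. But you hand that step to Brattka--H\"olzl--Kuyper, and your description leaves open exactly the choices on which it succeeds or fails. So as written this is an outline, not a proof. In particular, ``guarantee that all later terms agree on that coordinate'' must not mean ``among continuations converging to $p$''. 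That relation depends on all of $p$, so choosing an extension by it is not continuous.

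\textbf{The missing idea: a forcing relation on finite data only.} Take conditions $\sigma=(\rho_0,\dots,\rho_{s-1})$ of finite strings, standing for $y_i=\rho_i0^\N$. Say $\sigma$ \emph{forces} $(c,v,t_0)$ over a finite string $\pi$ if every $y$ satisfying
\begin{itemize}
\item $y$ begins with $\sigma$,
\item every later term of $y$ extends $\pi$, and
\item $y$ converges to \emph{some} point of $\operatorname{dom}(\mfP)$
\end{itemize}
has $\Phi(y)_t(c)=v$ for all $t\ge t_0$. This relation is preserved when you add terms extending $\pi$ and when you lengthen $\pi$.

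\textbf{The density lemma.} For $p\in\operatorname{dom}(\mfP)$, any $\sigma$, $k_0$ and $c$, some extension of $\sigma$ by terms extending $p\restriction k_0$ forces some $(c,v,t_0)$ over $p\restriction k$, for some $k\ge k_0$. Here $k$ must in general exceed what has been read: if $\mfP=\mfQ$ is the identity problem and $\Phi$ the identity, coordinate $c$ of the limit is $p(c)$. So the search cannot stay inside ``the finite part of $p$ read so far.''

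The proof is your oscillation argument, run along $p$ with the prefix lengthening at each step. Suppose density fails. At step $j$, first append a finite truncation of $p,p,p,\dots$ to fix $\Phi(\cdot)_t(c)=v$ for some $t\ge j$. Then use non-forcing over $p\restriction(k_0+j)$ to fix $\Phi(\cdot)_{t'}(c)\neq v$ for some $t'\ge j$. When truncating the witness, keep every new term extending $p\restriction(k_0+j)$. The diagonal sequence then converges to $p$ itself, while $\Phi$ of it diverges at $c$, a contradiction. Converging to $p$ itself is essential: $\operatorname{dom}(\mfP)$ need not be closed, so a diagonal sequence converging to some other point would give no contradiction. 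This is precisely the compatibility issue you flagged, and lengthening the prefix in the witnesses is what resolves it.

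\textbf{Finishing the construction.} Given the lemma, let $\Phi_0(p)(c)$ be the $v$ of the first tuple $(k,\sigma',v,t_0)$, in a fixed enumeration, that works. This depends on only finitely much of $p$. Then append a term $p\restriction(k+1)$ so that the resulting $y_p$ converges to $p$. The forcing relation quantifies over all of $\operatorname{dom}(\mfP)$, so this construction gives continuity but not computability. That is why the theorem is stated for $\lecsW$.
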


The lemma below gives sufficient conditions for $\mfP \lesW \mfQ$ to imply $\mfP \leW \mfQ$.

\begin{defn}[Dorais, Dzhafarov, Hirst, Mileti, Shafer \cite{ddhms16}]
A problem $\mfP$ is \emph{finitely tolerant} if there is a Turing functional $\Theta$ such that
whenever $p_1$ and $p_2$ are $\mfP$-instances which agree up to $t \in \N$ (i.e., for all $s \leq t$, $p_1(s) = p_2(s)$),
and $q$ is a $\mfP$-solution to $p_1$,
then $\Theta(q,t)$ is a $\mfP$-solution to $p_2$.
\end{defn}

\begin{lem}[Dzhafarov, Goh, Hirschfeldt, Patey, Pauly \cite{dghpp20}, Lemma 5.1] \label{lem:finitely_tolerant}
If $\mfP$ is finitely tolerant,
any finite modification of a $\mfP$-instance is still a $\mfP$-instance,
all $\mfP$- and $\mfQ$-solutions lie in some fixed finite set,
and $\mfP \leW \mfQ$,
then $\mfP \lesW \mfQ$.
\end{lem}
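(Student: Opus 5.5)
This lemma is cited from \cite{dghpp20}, but here is how I would prove it. Let $\Phi,\Psi$ witness $\mfP \leW \mfQ$. The goal is a single functional $\Psi^\ast$ that recovers a $\mfP$-solution from the $\mfQ$-solution alone, without access to the $\mfP$-instance. The approach is to \emph{hardcode} a finite amount of information about the instance into the forward map. Finite tolerance will then convert a solution for a modified instance back into a solution for the original.

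\textbf{Step 1.} Fix the finite set $F$ containing all $\mfQ$-solutions, and similarly for $\mfP$. For each $\mfP$-instance $p$, each $q\in F$ that solves $\Phi(p)$ has $\Psi(p,q)$ defined. By continuity, $\Psi(p,q)$ is determined by a finite prefix of $p$ (a finite-valued output needs only finitely many bits). Since $F$ is finite, there is a single $t$ such that $\Psi(p,q)$ is determined by $p\restriction t$ for all relevant $q$. However, this $t$ depends on $p$, which is the crux. To avoid needing $t$ in the backward direction, I would instead encode into the instance a \emph{fixed} prefix.

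\textbf{Step 2.} Fix some $\mfP$-instance $p_0$. Given any $\mfP$-instance $p$ and an arbitrary prefix length, define the modified instance $\hat p$ by an interleaving search as follows. Simulate $\Psi(p_0\restriction s \,{}^\frown p\text{-tail}, q)$ for all $q\in F$, and choose the prefix of $\hat p$ to agree with $p_0$ for as long as necessary. Then $\hat p$ equals $p_0\restriction t\,{}^\frown (p \text{ from } t \text{ on})$. This is a finite modification of $p$, hence a $\mfP$-instance. Moreover $\hat p$ and $p$ agree beyond $t$.

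A cleaner choice is to let $\hat p$ begin with a prefix of $p_0$ long enough that $\Psi(p_0\restriction t,q)$ converges for every $q\in F$ that is a $\mfQ$-solution to $\Phi(p_0)$. The main obstacle is choosing $t$ uniformly, so that $\Psi(\hat p,q)$ is computable from $q$ alone. I would handle this by defining the new forward map $p\mapsto \Phi(\hat p)$ and backward map $q\mapsto \Theta(\Psi(p_0\restriction t, q), t)$, where $t$ is fixed in advance.

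\textbf{Step 3 (verification).} Let $q$ solve $\Phi(\hat p)$. Then $\Psi(\hat p,q)$ is a $\mfP$-solution to $\hat p$, and it is computed from the prefix $p_0\restriction t$ alone, so the backward map is independent of $p$. Here I would carefully handle the shift so that $\hat p$ and $p$ \emph{agree}. I would shift differently: use finite tolerance applied to instances agreeing on a tail, for example by reindexing via the instance-modification closure. Finite tolerance via $\Theta$ then yields a solution to $p$. The delicate point I expect is reconciling ``agree up to $t$'' in the tolerance definition with the tail agreement produced. I would resolve it by placing the fixed information in a way compatible with the definition, possibly prepending it and using the given closure under finite modifications.
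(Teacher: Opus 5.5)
You correctly isolate the crux in Step~1, but the remedy in Steps~2--3 fails. The false step is the claim in Step~3 that $\Psi(\hat p,q)$ ``is computed from the prefix $p_0\restriction t$ alone.'' Your $t$ is chosen so that $\Psi(p_0\restriction t,q)$ converges for the $q\in F$ that solve $\Phi(p_0)$. The $q$ you actually receive solve $\Phi(\hat p)$, and $\hat p$ copies $p$ from position $t$ on. Such a $q$ need not solve $\Phi(p_0)$, and then $\Psi(\hat p,q)$ may have to read the copied part of $p$.

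Taking $t$ larger does not repair this. Let $\mfP=\mfQ=\mflim_2$ and let $\Phi$ be the identity. Let $\Psi(p,q)$ output $0$ at once if $q=0$; if $q=1$, it first searches for an $s$ with $p(s)=1$ and then outputs $1$. With $p_0=0^\N$, every $t$ meets your criterion. Yet for $p=1^\N$, the solution $q=1$ of $\Phi(\hat p)$ makes $\Psi(p_0\restriction t,1)$ diverge, whatever $t$ is. So the string you hardcode cannot in general be a prefix of an arbitrarily chosen $p_0$.

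The missing idea is a maximality condition over \emph{all} extensions of the hardcoded string; this is where finiteness of the solution sets is really used. For a string $\sigma$, let $D(\sigma)$ be the set of $q\in F$ for which $\Psi(\sigma,q)$ already converges, i.e., already determines an element of the finite set of $\mfP$-solutions. $D$ is monotone under extension and bounded by $F$. Start from the empty string and pass to an extension with strictly larger $D$ whenever one exists. After at most $|F|$ steps you reach a $\sigma$ with $D(\tau)=D(\sigma)$ for all $\tau\succeq\sigma$. This $\sigma$ is found non-effectively, but it is finite data and can be hardcoded.

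Now let $\hat p=\sigma^\frown(p(|\sigma|),p(|\sigma|+1),\ldots)$, $\Phi^\ast(p)=\Phi(\hat p)$, and $\Psi^\ast(q)=\Theta(\Psi(\sigma,q),|\sigma|)$. Suppose $q$ solves $\Phi(\hat p)$. Then $\Psi(\hat p,q)$ converges using some prefix $\tau$ of $\hat p$, which may be taken to extend $\sigma$. Hence $q\in D(\tau)=D(\sigma)$, and $\Psi(\sigma,q)=\Psi(\hat p,q)$ solves $\hat p$.

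As for your worry about ``agree up to $t$'': finite tolerance must be read as tail agreement, $p_1(s)=p_2(s)$ for all $s\geq t$. That is the only reading under which $\Theta(q,t)=q$ witnesses finite tolerance of $\mflim_k$, as the paper asserts right after the lemma. Since $\hat p$ and $p$ agree from $|\sigma|$ on, $\Psi^\ast(q)$ solves $p$, and no reindexing is needed.
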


$\mflim_k$ and $\wphp{'}{m}{n}$ satisfy all conditions for both $\mfP$ and $\mfQ$ in the lemma.
In particular, $\mflim_k$ and $\wphp{'}{m}{n}$ are both finitely tolerant
as witnessed by $\Theta(q,t) = q$ for all $q \in \N^\N$, $t \in \N$.

We are ready to prove Theorem \ref{thm:reduction_lifting}, namely that the following are equivalent:
	(1) $\mflim_k \leW \wphp{'}{m}{n}$,
	(2) $\mflim_k \lesW \wphp{'}{m}{n}$,
	(3) $\mflim_k \lecsW \wphp{'}{m}{n}$,
	(4) $\id_k \lecsW \wphp{}{m}{n}$,
	(5) $\id_k \leq \wphp{}{m}{n}$,
	(6) $\id_k \lesW \wphp{}{m}{n}$.

\begin{proof}[Proof of Theorem \ref{thm:reduction_lifting}]
(1) implies (2) by Lemma \ref{lem:finitely_tolerant}.
(2) trivially implies (1).
Next, (2) trivially implies (3).
(3) implies (4) by Theorem \ref{thm:jump_inversion}.
(4) trivially implies (5).
(5) implies (6) because functions on finite discrete spaces are automatically computable.
(6) implies (2) by Proposition \ref{prop:jump_sW}.

The proof that $\wphp{}{m_0}{n_0} \leq \wphp{}{m_1}{n_1}$ if and only if $\wphp{'}{m_0}{n_0} \leW \wphp{'}{m_1}{n_1}$ is analogous.
\end{proof}	

\section{Additional results} \label{section:adhoc}

This section establishes several ad hoc reductions and nonreductions. When constructing reductions, we shall express a function from $m$ to $n$ by listing its fibers in order. For example, $(02)(13)$ represents the function $0 \mapsto 0, 2 \mapsto 0, 1 \mapsto 1, 3 \mapsto 1$. As a warm-up, we shall prove $\id_3 \leq \wphp{}{4}{2}$. Use Lemma \ref{lem:id_k_le_m_iff_k_functions_no_common_soln} and observe that the following 3 functions from $4$ to $2$ have no pairwise common solution:
\[ (01)(23) \qquad (02)(13) \qquad (03)(12). \]

Next, we address the optimality of Proposition \ref{prop:id_below_m->n_graph_decomp}. For $m = 9$ and $n = 4$, it yields $\id_4 \leq \wphp{}{9}{4}$, while for $m = 7$ and $n = 5$, it yields $\id_9 \leq \wphp{}{7}{5}$. More is true, however:

\begin{prop} \label{prop:6_leq_9->4,10_leq_7->5}
$\id_6 \leq \wphp{}{9}{4}$ and $\id_{10} \leq \wphp{}{7}{5}$.
\end{prop}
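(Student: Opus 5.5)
For both reductions I will use Lemma \ref{lem:id_k_le_m_iff_k_functions_no_common_soln}. It suffices to exhibit $6$ functions $9 \to 4$, and $10$ functions $7 \to 5$, with no pairwise common solution. Equivalently, I want edge-disjoint subgraphs of $K_9$ (resp.\ $K_7$), each of which is the solution graph of some function, i.e.\ a disjoint union of cliques with the right number of parts.

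\emph{The case $\id_6 \leq \wphp{}{9}{4}$.} By Lemma \ref{lem:Turan_bound} (with $q=2$, $r=1$), each $f: 9 \to 4$ has at least $6$ solutions, with equality for fiber sizes $3,2,2,2$. Since $6 \cdot 6 = 36 = \binom{9}{2}$, I need to partition the edges of $K_9$ into six spanning copies of $K_3 \sqcup 3K_2$. I will use the affine plane of order $3$ on the $9$ points. Its $12$ lines (triangles) fall into four parallel classes $A,B,C,D$.

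For a line $T \in A$, the other two $A$-lines cover the $6$ points outside $T$. Each $C$-line meets $T$ in exactly one point, so exactly one of its three edges avoids $T$. The three $C$-lines are disjoint, so these three edges form a perfect matching of the complement of $T$. Let $f_T$ have fibers $T$ and these three pairs.

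Moreover, each edge of a $C$-line avoids exactly one $A$-line, namely the $A$-line through its opposite vertex. Hence the $9$ edges of the $C$-lines are partitioned among the three $A$-lines. Symmetrically, for $T \in B$ I use the edges of the $D$-lines.

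The six graphs are then edge-disjoint: the triangles from $A$ and $B$ are distinct lines, and the matching edges from $C$ and $D$ are partitioned as above. The only verification needed is this partition claim, which follows from the affine plane axioms.

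\emph{The case $\id_{10} \leq \wphp{}{7}{5}$.} A function $7 \to 5$ whose fibers are two pairs and three singletons has exactly $2$ solutions, forming a $2$-edge matching. So I need $10$ edge-disjoint $2$-edge matchings in $K_7$; these use $20$ of its $21$ edges.

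I will decompose $K_7$ on $\mathbb{Z}_7$ into the three Hamiltonian cycles $C_d$ ($d=1,2,3$), where $C_d$ has edges $\{x, x+d\}$. From $C_1$ I remove the edge $\{0,1\}$, and from $C_2$ I remove the edge $\{2,4\}$. From $C_3$ I remove any one edge.

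Each remaining path has edges $e_1,\dots,e_6$ in order, and the pairs $\{e_1,e_4\},\{e_2,e_5\},\{e_3,e_6\}$ are matchings, since these edges are non-adjacent on the path. This gives $9$ matchings. Finally, the removed edges $\{0,1\}$ and $\{2,4\}$ are disjoint, so they form a tenth matching.

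All ten matchings are edge-disjoint, so the corresponding ten functions have no pairwise common solution.

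The main obstacle is finding the $9 \to 4$ configuration. Counting forces an exact decomposition of $K_9$, so there is no slack, and the affine-plane parallel-class trick is what makes it work. The $7 \to 5$ case only requires choosing the two leftover edges to be disjoint.
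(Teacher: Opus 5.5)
Your proof is correct. For $\id_6 \leq \wphp{}{9}{4}$ you have reproduced the paper's construction. The six functions listed there are exactly the rows and columns of the $3\times 3$ grid model of the affine plane of order $3$. Each row is augmented by the edges of the lines of one remaining parallel class that avoid it, and each column by those of the other class, which is your $A$/$C$, $B$/$D$ pairing. The paper only lists the functions and leaves the check to inspection. Your observation that the three points of a $C$-line lie on three distinct $A$-lines supplies that verification uniformly.

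For $\id_{10} \leq \wphp{}{7}{5}$ you take a different route. The paper writes down ten matchings ad hoc: each edge at $0$ is paired with an edge at $1$, except that $\{0,1\}$ is paired with $\{4,5\}$, and the remaining edges of $K_5$ on $\{2,\dots,6\}$ other than $\{5,6\}$ are grouped into four more matchings. You instead start from the cyclic Hamiltonian decomposition of $K_7$ that underlies Proposition~\ref{prop:id_below_m->n_graph_decomp}, and you gain a tenth matching by cutting $C_1$ and $C_2$ at disjoint edges.

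The explicit list is quicker to verify by hand. Your version explains why Proposition~\ref{prop:id_below_m->n_graph_decomp} yields only $\id_9$ here: each $7$-cycle wastes one edge. It also shows that pooling the wasted edges across cycles reaches the value $10$, which Lemma~\ref{lem:id_k_not_below_m->n_counting_bound} shows is optimal.
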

\begin{proof}
To show $\id_6 \leq \wphp{}{9}{4}$, observe that the 6 functions (from $9$ to $4$) in the first column below have no pairwise common solution. We illustrate these functions in Figure \ref{fig:6_leq_9->4}. (This is obtained from the affine plane of order 3.) 
\begin{align*}
	&(012)(38)(46)(57) &(01)(45)(2)(3)(6) \qquad &(23)(46)(0)(1)(5) \\
	&(345)(18)(26)(07) &(02)(16)(3)(4)(5) \qquad &(24)(36)(0)(1)(5) \\
	&(678)(13)(24)(05) &(03)(12)(4)(5)(6) \qquad &(25)(34)(0)(1)(6) \\
	&(036)(15)(27)(48) &(04)(13)(2)(5)(6) \qquad &(26)(35)(0)(1)(4) \\
	&(147)(56)(23)(08) &(05)(14)(2)(3)(6) \qquad & \\
	&(258)(16)(37)(04) &(06)(15)(2)(3)(4) \qquad &
\end{align*}
Similarly, consider each row in the second and third columns as a function from $7$ to $5$. These 10 functions from $7$ to $5$ have no pairwise common solution, proving $\id_{10} \leq \wphp{}{7}{5}$. (Note that $\{5,6\}$ is not a solution of any of the 10 functions. This does not affect our reduction.)
\end{proof}

\begin{figure}
	\begin{tikzpicture}
		\newcommand{\coloredge}[3]{\draw (360*#2/9:2) edge[#1,ultra thick] (360*#3/9:2)}
		\foreach \i / \j in {1/2,1/3,2/3,4/9,5/7,6/8}
			\coloredge{wong_orange}{\i}{\j};
		\foreach \i / \j in {4/5,4/6,5/6,3/7,2/9,1/8}
			\coloredge{wong_skyblue}{\i}{\j};
		\foreach \i / \j in {7/8,7/9,8/9,1/6,2/4,3/5}
			\coloredge{wong_bluishgreen}{\i}{\j};
		\foreach \i / \j in {1/4,1/7,4/7,2/6,3/8,5/9}
			\coloredge{black}{\i}{\j};
		\foreach \i / \j in {2/5,2/8,5/8,3/4,6/7,9/1}
			\coloredge{wong_yellow}{\i}{\j};
		\foreach \i / \j in {3/6,3/9,6/9,1/5,2/7,4/8}
			\coloredge{wong_reddishpurple}{\i}{\j};
	\end{tikzpicture}
    \caption{$\id_6 \leq \wphp{}{9}{4}$. View edges of the same color as defining a function from 9 to 4.}
    \label{fig:6_leq_9->4}
\end{figure}

Next, we present additional results regarding the strictness of the hierarchies $\wphp{}{n+1}{n} \geq \wphp{}{n+2}{n} \geq \dots$ Recall from Theorem \ref{thm:summary} that $\wphp{}{3}{2} \equiv \wphp{}{4}{2} > \wphp{}{5}{2}$.

\begin{prop}
$\wphp{}{5}{2} > \wphp{}{6}{2} > \wphp{}{7}{2}$.
\end{prop}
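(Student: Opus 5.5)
The two reductions $\wphp{}{6}{2} \leq \wphp{}{5}{2}$ and $\wphp{}{7}{2} \leq \wphp{}{6}{2}$ follow from Proposition \ref{prop:m->n_basic_rshp}, so only the two separations need proof. The $\id_k$ hierarchy cannot help here, since $\id_2 \not\leq \wphp{}{5}{2}$ by Proposition \ref{prop:id_2_not_below_n^2+1}. The jump hierarchies $\AoUC_k$, $\ACC_k$ also seem too coarse at $n=2$, since both $\AoUC_3$ and $\ACC_2$ already reduce to $\wphp{'}{8}{2}$. So I would argue directly with $\Phi,\Psi$ as in Definition \ref{defn:gen_reductions}, by a weighted counting argument. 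The idea is to feed $\Phi$ a carefully chosen family of instances with few solutions each, and then count how many times each pair in the target can be ``used'' by $\Psi$.

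\emph{Separating $(6,2)$ from $(7,2)$.} Suppose $\Phi,\Psi$ witness $\wphp{}{6}{2} \leq \wphp{}{7}{2}$. Take the $10$ functions $f_P: 6 \to 2$, one for each unordered partition $P$ of $6$ into two $3$-sets; each $f_P$ has exactly $6$ solutions. By Lemma \ref{lem:Turan_bound}, every $g: 7 \to 2$ has at least $9$ solutions, and each solution $e$ of $\Phi(f_P)$ satisfies $\Psi(e) \in \mathrm{Sol}(f_P)$.
\begin{itemize}
\item Summing over $P$ gives at least $90$ incidences $(e,P)$ with $\Psi(e) \in \mathrm{Sol}(f_P)$.
\item On the other hand, a fixed pair $\{i,j\} \subseteq 6$ lies on one side of exactly $4$ of the $10$ partitions. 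So each of the $21$ pairs $e$ of $7$ contributes at most $4$ incidences, for a total of at most $84$.
\end{itemize}
Since $84 < 90$, this is a contradiction.

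\emph{Separating $(5,2)$ from $(6,2)$.} Here the same count is exactly tight, and this is the step I expect to be the main obstacle. Use the $10$ functions $f_A: 5 \to 2$ with fibers a $3$-set $A$ and its complement $B$, so $|\mathrm{Sol}(f_A)| = 4$. Each pair of $5$ lies in $\mathrm{Sol}(f_A)$ for exactly $4$ values of $A$: three with the pair inside $A$, one with the pair equal to $B$. Every $g: 6 \to 2$ has at least $6$ solutions. Hence
\[ 60 \leq \sum_A |\mathrm{Sol}(\Phi(f_A))| \leq \sum_{e} \#\{A : \Psi(e) \in \mathrm{Sol}(f_A)\} \leq 15 \cdot 4 = 60, \]
where $e$ ranges over the $15$ pairs of $6$. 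Equality everywhere forces three things:
\begin{itemize}
\item each $\Phi(f_A)$ has a $3|3$ fiber partition $P_A$;
\item $\Psi$ is total on pairs of $6$;
\item for every pair $e$ of $6$, $\{A : e \text{ lies inside a part of } P_A\} = \{A : \Psi(e) \in \mathrm{Sol}(f_A)\}$, and this set has exactly $4$ elements.
\end{itemize}

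Now apply pigeonhole to the $15$ pairs of $6$ and the $10$ pairs of $5$: some $e \neq e'$ have $\Psi(e) = \Psi(e')$. Then $e$ and $e'$ each lie inside $P_A$ for the same set $S$ of $4$ values of $A$. I then want a contradiction from the geometry of $3|3$ partitions: two distinct pairs of $6$ lie together inside at most $2$ of the $10$ partitions (exactly $1$ if they share a vertex, $2$ if they are disjoint).

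The remaining worry is that $A \mapsto P_A$ need not be injective. I would handle this as follows. If $P_A = P_{A'}$ with $A \neq A'$, then the $6$ pairs inside $P_A$ map into $\mathrm{Sol}(f_A) \cap \mathrm{Sol}(f_{A'})$, which has at most $2$ elements. Then many pairs of $6$ share a $\Psi$-image, and I would show this is incompatible with every $\Psi$-fibre $\Psi^{-1}(d)$ having its $A$-set equal to the fixed $4$-set $\{A : d \in \mathrm{Sol}(f_A)\}$. A double count of pairs $(e,A)$ with $e$ inside $P_A$, split according to $\Psi(e)$, should make this precise. If that bookkeeping gets messy, the fallback is to add the five functions with a $4|1$ split to the family and redo the tight count, which should break the degenerate configurations.
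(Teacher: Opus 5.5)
Your two reductions and your proof of $\wphp{}{6}{2}\not\leq\wphp{}{7}{2}$ are correct, and the latter is a genuinely different and much shorter route than the paper's. The paper takes seven pairs of functions $6\to 2$ whose two-element sets of common solutions are pairwise disjoint. It uses that any two functions $7\to 2$ share at least three solutions, reaches the exactly tight count $7\cdot 3=\binom{7}{2}$, and then needs a long case analysis with auxiliary functions. Your averaging over all ten balanced functions uses only that $\Psi(e)$ is a single pair, independent of the instance. The strict inequality $90>84$ then finishes at once: functions $7\to 2$ have solution density at least $9/21$, while a fixed pair is a solution of a random balanced $6\to 2$ function with probability $4/10$.

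Your proof of $\wphp{}{5}{2}\not\leq\wphp{}{6}{2}$, however, has a genuine gap. Here your count is exactly tight, so all the content lies in excluding degenerate configurations. That is precisely the step you leave as ``should make this precise'' plus a fallback. The non-injective case is not a side case. Once $\Psi(e)=\Psi(e')=d$, all four $A$ with $d\in\mathrm{Sol}(f_A)$ must have $P_A$ among the at most two partitions containing both $e$ and $e'$ inside parts. So $A\mapsto P_A$ is \emph{always} non-injective at that point, and the argument as written yields no contradiction. The fallback does not work as stated either: with the five $4|1$ functions added, each pair of $5$ is a solution of $7$ of the $15$ functions, and the count becomes $90\leq 105$, which is no longer tight.

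The gap can be closed from your own equality conditions. Let $M$ be the $15\times 10$ pair-versus-partition incidence matrix, and let $\nu_P=|\{A : P_A=P\}|$. Your equality conditions say $M\nu=4\cdot\mathbf{1}=M\mathbf{1}$. Two distinct $3|3$ partitions of $6$ have exactly two inside pairs in common, so $M^{T}M=4I+2J$, which is nonsingular. Hence $\nu=\mathbf{1}$, the map $A\mapsto P_A$ is a bijection, and your pigeonhole step finishes.

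Alternatively, use the paper's argument, which compares \emph{common} solutions of two functions rather than solutions of one. For each pair $\{i,j\}$ of $5$ there are $f^1_{ij},f^2_{ij}\colon 5\to 2$ whose only common solution is $\{i,j\}$ (e.g.\ $(012)(34)$ and $(023)(14)$ for $\{0,2\}$). Any two functions $6\to 2$ share at least two solutions, and $\Psi$ must send every common solution of $\Phi(f^1_{ij})$ and $\Phi(f^2_{ij})$ to $\{i,j\}$. So $|\Psi^{-1}(\{i,j\})|\geq 2$ for all ten pairs, which would require $20>\binom{6}{2}$ pairs of $6$.
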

\begin{proof}
For each $i < j < 5$, we may fix two functions $f^1_{ij}, f^2_{ij}: 5 \to 2$ whose only common solution is $\{i,j\}$, e.g., the only common solution of $(012)(34)$ and $(023)(14)$ is $\{0,2\}$. The key difference between $6$ and $5$ is that any two functions from $6$ to $2$ must have at least $2$ solutions in common.

To prove that $\wphp{}{5}{2} \not\leq \wphp{}{6}{2}$, suppose towards a contradiction that $\Phi$ and $\Psi$ witness the reduction. For each $i < j < 5$, any common solution of $\Phi(f^1_{ij}), \Phi(f^2_{ij}): 6 \to 2$ must be mapped to $\{i,j\}$ under $\Psi$. Since $\Phi(f^1_{ij})$ and $\Phi(f^2_{ij})$ have at least two solutions in common, $|\Psi^{-1}(\{i,j\})| \geq 2$. This is impossible since $\binom{6}{2} < 2\binom{5}{2}$.

Our proof that $\wphp{}{6}{2} \not\leq \wphp{}{7}{2}$ elaborates on the above ideas. First, observe that any two of the following functions from $6$ to $2$ have exactly 2 solutions in common:
\[ f_1: (034)(125) \qquad f_2: (025)(134) \qquad f_3: (024)(135) \qquad f_4: (035)(124) \]
The following functions also have exactly 2 solutions in common:
\[ f_5: (014)(235) \qquad f_6: (015)(234) \]
So we have $\binom{4}{2} + 1 = 7$ pairs of functions such that each pair has exactly 2 solutions in common. Furthermore, observe that the corresponding 7 sets of common solutions are pairwise disjoint. (Since $\binom{6}{2} < 8 \cdot 2$, we cannot find an 8th pair of functions while maintaining disjointness.)

On the other hand, a straightforward case analysis shows that any two functions from $7$ to $2$ have at least 3 solutions in common.

Suppose towards a contradiction that $\Phi$ and $\Psi$ witness $\wphp{}{7}{2} \leq \wphp{}{6}{2}$. For each of the above 7 pairs of functions $\{f,f'\}$, $\Psi$ must map each common solution of $\Phi(f)$, $\Phi(f')$ to one of the 2 common solutions of $f$ and $f'$. Since each pair $\Phi(f),\Phi(f'): 7 \to 2$ must have at least 3 solutions in common, and $\binom{7}{2} = 7 \cdot 3$, equality is forced, i.e., for each $\{f,f'\}$ above, $\Phi(f)$ and $\Phi(f')$ have exactly 3 solutions in common. This implies that $\binom{7}{2}$ is partitioned into $7$ sets of size $3$, each corresponding to the set of common solutions of some $\Phi(f_i),\Phi(f_j)$ (where $\{f_i,f_j\}$ varies among $\{f_1,f_2\},\{f_1,f_3\}, \{f_1,f_4\}, \{f_2,f_3\}, \{f_2,f_4\}, \{f_3,f_4\}$ and $\{f_5,f_6\}$).

More work is needed to reach a contradiction. Consider the common solutions of $f_5$ and $f_6$, namely $\{0,1\}$ and $\{2,3\}$. We claim that $\Psi$ maps all 3 pairs in $\Psi^{-1}\{\{0,1\},\{2,3\}\}$ to $\{2,3\}$. To prove this, first observe the following relationship between $f_7 = (023)(145)$ and the pair $\{f_1,f_2\}$: $f_1$ and $f_2$ have common solutions $\{2,5\}$ and $\{3,4\}$, neither of which is a solution to $f_7$. Therefore, $\Psi^{-1}\{\{2,5\},\{3,4\}\}$ does not contain any solution to $\Phi(f_7)$. By similar reasoning (for $\{f_3,f_4\}$), $\Psi^{-1}\{\{2,4\},\{3,5\}\}$ does not contain any solution to $\Phi(f_7)$ either.

Repeat the above reasoning for $f_8 = (045)(123)$ instead of $f_7$. We deduce that neither $\Psi^{-1}\{\{2,5\}$, $\{3,4\}\}$ nor $\Psi^{-1}\{\{2,4\},\{3,5\}\}$ contain any solution to $\Phi(f_8)$.

Next, consider the common solutions of $\{f_7,f_8\}$, namely $\{2,3\}$ and $\{4,5\}$. Since $\{0,4\}$ and $\{1,5\}$ are common solutions of $\{f_1,f_3\}$ but neither of them are common solutions of $\{f_7,f_8\}$, it follows that none of the 3 pairs in $\Psi^{-1}\{\{0,4\},\{1,5\}\}$ are common solutions of $\Phi(f_7)$ and $\Phi(f_8)$. An analogous statement holds for $\Psi^{-1}\{\{0,3\},\{1,2\}\}$, $\Psi^{-1}\{\{0,2\},\{1,3\}\}$ and $\Psi^{-1}\{\{0,5\},\{1,4\}\}$, looking at $\{f_1,f_4\}$, $\{f_2,f_3\}$ and $\{f_2,f_4\}$ respectively.

We know $\Phi(f_7)$ and $\Phi(f_8)$ have at least 3 solutions in common, and all solutions except the 3 in $\Psi^{-1}\{\{0,1\},\{2,3\}\}$ have been eliminated above. Among $\{0,1\}$ and $\{2,3\}$, only $\{2,3\}$ is a common solution of $\{f_7,f_8\}$. So $\Psi$ maps all of $\Psi^{-1}\{\{0,1\},\{2,3\}\}$ to $\{2,3\}$, proving our claim.

However, by considering $\Phi$'s output on (012)(345) and (013)(245), one may prove similarly that all 3 pairs in $\Psi^{-1}\{\{0,1\},\{2,3\}\}$ are mapped to $\{0,1\}$. We merely state the key points: None of $\{0,4\}$, $\{1,5\}$, $\{0,5\}$, $\{1,4\}$ are solutions to (012)(345) and (013)(245). (012)(345) and (013)(245) have no common solution among $\{0,3\}$, $\{1,2\}$ and $\{0,2\}$, $\{1,3\}$ and $\{2,5\}$, $\{3,4\}$ and $\{2,4\}$, $\{3,5\}$. Among $\{0,1\}$ and $\{2,3\}$, only $\{0,1\}$ is a common solution of (012)(345) or (013)(245).
\end{proof}

Recall from Theorem \ref{thm:summary} that $\wphp{}{4}{3} > \wphp{}{5}{3}$ and $\wphp{}{6}{3} > \wphp{}{7}{3}$. We fill the gap:

\begin{prop}
$\wphp{}{5}{3} > \wphp{}{6}{3}$.
\end{prop}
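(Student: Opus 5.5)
We must show two things: $\wphp{}{6}{3} \leq \wphp{}{5}{3}$ and $\wphp{}{5}{3} \not\leq \wphp{}{6}{3}$. The first is immediate from Proposition~\ref{prop:m->n_basic_rshp}. Note that the $\id_k$-hierarchy cannot give the separation. Proposition~\ref{prop:id_below_m->n_graph_decomp} gives $\id_5 \leq \wphp{}{5}{3}$, and a $1$-factorization of $K_6$ together with Theorem~\ref{thm:RBIBD} gives $\id_5 \leq \wphp{}{6}{3}$. So I would argue directly, with a weighted counting argument in the style of the proof that $\wphp{}{5}{2} \not\leq \wphp{}{6}{2}$.

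Suppose $\Phi,\Psi$ witness $\wphp{}{5}{3} \leq \wphp{}{6}{3}$. For each pair $e=\{a,b\}$ with $a \neq b < 5$, let $A_e$ be the set of pairs $\{i,j\}$ from $6$ with $\Psi(\{i,j\}) = e$, and put $w(e) = |A_e|$. Since $\Psi$ is a function, the $A_e$ are pairwise disjoint subsets of the $\binom{6}{2}=15$ pairs from $6$, so $\sum_e w(e) \leq 15$.

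Now take disjoint pairs $e=\{a,b\}$ and $e'=\{c,d\}$ from $5$, and let $x$ be the remaining element. The function $f$ with fibers $(ab)(cd)(x)$ has exactly the two solutions $e,e'$. Every solution of $\Phi(f)$ must be sent by $\Psi$ into $\{e,e'\}$, so the solution set of $\Phi(f)$ lies in $A_e \cup A_{e'}$. By Lemma~\ref{lem:Turan_bound} with $m=6$, $n=3$, every function $6 \to 3$ has at least $3$ solutions. Hence $w(e)+w(e') \geq 3$ for every pair of disjoint edges $e,e'$ of $K_5$.

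The graph on the $10$ edges of $K_5$ in which two edges are adjacent exactly when they are disjoint is the Petersen graph, which is $3$-regular with $15$ edges. Summing the inequality over all $15$ Petersen edges gives $3\sum_e w(e) \geq 45$, so $\sum_e w(e) \geq 15$. Combined with the upper bound, every inequality must be tight: $w(e)+w(e')=3$ for every Petersen edge $ee'$. But the Petersen graph contains a $5$-cycle, for instance $\{0,1\},\{2,3\},\{4,0\},\{1,2\},\{3,4\}$. Going around an odd cycle with $w(u)+w(v)=3$ on each edge forces $w \equiv 3/2$ on the cycle, which contradicts integrality. This would complete the separation.

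The main obstacle is spotting this parity/odd-cycle contradiction. Plain counting only yields the equality case $\sum_e w(e)=15$ and stops there. Recognizing the disjointness graph as the non-bipartite Petersen graph is what finishes the argument. The remaining checks are routine: the Turán count of $3$, the disjointness of the $A_e$, and that $f$ has exactly two solutions.
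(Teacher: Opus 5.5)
Your proof is correct, and it takes a genuinely different route from the paper's.

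\textbf{The paper's argument.} The paper splits into two cases according to whether $\Psi$ is surjective onto the ten pairs from $5$.
\begin{itemize}
\item If $\Psi$ is surjective, at least $2\binom{5}{2}-\binom{6}{2}=5$ pairs have exactly one preimage. Two of these are disjoint, and the function $(ab)(cd)(x)$ built from them gives $w(e)+w(e')\le 2<3$.
\item If $\Psi$ misses some $\{a,b\}$, the three functions pairing $\{a,b\}$ with $\{c,d\}$, $\{c,e\}$, $\{d,e\}$ give three preimage sets of size at least $3$. The functions $(ac)(bd)$, $(ad)(be)$, $(ae)(bc)$ give three more, disjoint from the first three. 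That makes $18>15$ pairs.
\end{itemize}

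\textbf{The paper's cases in your language.} Both cases are local weight arguments on the Petersen graph. Case 1 silently uses that its independence number is $4$, i.e.\ any five edges of $K_5$ contain two disjoint ones. Case 2 starts from a vertex of weight $0$. Its three neighbours must then each carry weight at least $3$. A perfect matching on the remaining six vertices adds three more Petersen edges.

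\textbf{What your route changes.} You replace the case split by a single global double count over all fifteen Petersen edges. This forces $\sum_e w(e)=15$ and tightness on every edge, and parity on a $5$-cycle then finishes it. This explains why the separation holds. The fractional solution $w\equiv 3/2$ meets the budget $\binom{6}{2}=15$ exactly, and the non-bipartiteness of the Kneser graph $K(5,2)$ is precisely the integrality obstruction. The template is visibly reusable for other pairs $(m,n)$, $(m',n')$: take a lower bound on the number of solutions from Lemma~\ref{lem:Turan_bound}, impose covering constraints given by the solution sets of chosen source instances, and look for an integrality gap.

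\textbf{What the paper's route buys.} Each case needs only a handful of explicit instances, and there is no global tightness step.
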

\begin{proof}
Suppose towards a contradiction that $\Phi$ and $\Psi$ witness $\wphp{}{5}{3} \leq \wphp{}{6}{3}$.

Case 1: Suppose $\Psi$ is surjective. Then there are at least $2\binom{5}{2} - \binom{6}{2} = 5$ pairs $i \neq j$ from $\binom{5}{2}$ such that $|\Psi^{-1}(\{i,j\})| \leq 1$. Among them, choose any two pairs $\{a,b\}$ and $\{c,d\}$ such that $a\neq c,d$ and $b \neq c,d$. Define $f: 5 \to 3$ by $f(a) = f(b) = 0$, $f(c) = f(d) = 1$, otherwise $f(e) = 2$. The only solutions of $f$ are $\{a,b\}$ and $\{c,d\}$. Since $\Phi(f)$ is a function from $6$ to $3$, it has at least 3 solutions, which (because $|\Psi^{-1}(\{a,b\})| \leq 1$ and $|\Psi^{-1}(\{c,d\})| \leq 1$) means that at least one of its solutions cannot map to $\{a,b\}$ or $\{c,d\}$ under $\Psi$. This means that $\Phi$ and $\Psi$ do not witness the reduction after all.

Case 2: Otherwise, fix $\{a,b\}$ not in the range of $\Psi$. Let $c$, $d$ and $e$ denote the remaining numbers in 5. Then $\Phi((ab)(cd)): 6 \to 3$ has at least 3 solutions, all of which must map to $\{c,d\}$ under $\Psi$. This means $|\Psi^{-1}(\{c,d\})| \geq 3$. Similarly, $|\Psi^{-1}(\{c,e\})| \geq 3$ and $|\Psi^{-1}(\{d,e\})| \geq 3$ by considering $\Phi((ab)(ce))$ and $\Phi((ab)(de))$ respectively.

Next, observe that each of $\Phi((ac)(bd))$, $\Phi((ad)(be))$ and $\Phi((ae)(bc))$ have at least 3 solutions, and that they have no pairwise common solution (because $(ac)(bd)$, $(ad)(be)$, and $(ae)(bc)$ have no pairwise common solution).

But now we have $6$ pairwise disjoint subsets of $\binom{6}{2}$ each with size at least $3$: $\Psi^{-1}(\{c,d\})$, $\Psi^{-1}(\{c,e\})$, $\Psi^{-1}(\{d,e\})$, $\Psi^{-1}(\{\{a,c\},\{b,d\}\})$, $\Psi^{-1}(\{\{a,d\},\{b,e\}\})$, and $\Psi^{-1}(\{\{a,e\},\{b,c\}\})$. This is impossible.
\end{proof}

Finally, we construct a reduction from $\C_3$ to $\wphp{'}{8}{2}$. In addition to strengthening Corollary \ref{cor:AoUC_3_below_n^3->n} for $n = 2$, this yields a complete picture for when $\C_k \leW \wphp{'}{m}{2}$:
if $m \geq 9$,
then $\C_2 = \ACC_2 \not\leW \wphp{'}{m}{2}$ by Proposition \ref{prop:ACC_not_below_n^(k+1)+1};
if $3 \leq m \leq 8$,
then $\C_3 \leW \wphp{'}{m}{2}$ by the following proposition
and the fact that $\C_4 \not\lecW \wphp{'}{m}{2}$.
(To see this last fact, note that $\wphp{'}{m}{2} \leW \wphp{'}{3}{2} \leW \mflim_3$ by Theorem \ref{thm:reduction_lifting}. The claim now follows from the more general fact that $\C_{k+1} \not\lecW \mflim_k$ for any $k$, which is standard, but we sketch here for completeness. Indeed, suppose otherwise, as witnessed by continuous functionals $\Phi$ and $\Psi$. It can be checked that there exist numbers $s_0,\ldots,s_{k-1}$, $i_0,\ldots,i_{k-1}$, and $j_0,\ldots,j_k$ with the following properties: $i_0,\ldots,i_{k-1}$ are all $< k$ and distinct; $j_0,\ldots,j_k$ are all $< k+1$ and distinct; and for every $\ell < k$, if $p \in \N^\N$ is any sequence extending $j_0^{s_0}\cdots j_\ell^{s_\ell}$ then $\Phi(p,i_\ell) = j_\ell$. Let $p = j_0^{s_0} \cdots j_{k-1}^{s_{k-1}} (k+1)^\N$, an instance of $\C_{k+1}$ with unique solution $j_k$. Then $\Phi(p)$ is an instance of $\lim_k$, which must thus have solution $i_\ell$ for some $\ell < k$. But $\Psi(p,i_\ell) = j_\ell \neq j_k$, a contradiction.)

\begin{prop} \label{prop:C_3_leq_8->2}
$\C_3 \leW \wphp{'}{8}{2}$.
\end{prop}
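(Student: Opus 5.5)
The plan is to apply Theorem \ref{thm:C_k_leW_m_iff_functions_indexed_by_injections} with $k=3$, $m=8$, $n=2$. For this we must exhibit ten functions $8\to 2$: one $g=f_\emptyset$, three $f_a$ ($a<3$) and six $f_{ab}$ ($a\neq b<3$). They must satisfy the tree condition: for every solution $\{i,j\}$ of $f_\sigma$ there is an $\ell<3$ lying outside $\operatorname{ran}(\tau)$ for every $\tau\supseteq\sigma$ such that $\{i,j\}$ solves $f_\tau$.

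I will identify $8$ with $\{0,1\}^3$ and use the coordinate functions $c_0,c_1,c_2$, each written as two fibers of size $4$. By the computation in Proposition \ref{prop:n^l+1->n_strictly_below_n^l->n}, these three have no common solution.

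I will check the three levels bottom-up. At the leaves $\sigma=ab$ the condition is automatic: take $\ell$ to be the unique element not in $\{a,b\}$. At level one, for $\sigma=a$ the condition says the following. No solution of $f_a$ may simultaneously solve $f_{ab}$ and $f_{ac}$, since together these extensions cover all three values. Equivalently, $f_a$, $f_{ab}$, $f_{ac}$ have no common solution, and in particular $f_{ab}$ and $f_{ac}$ must differ. At the root, for each solution $\{i,j\}$ of $g$, the set of $\operatorname{ran}(\tau)$ over all $\tau$ with $\{i,j\}$ solving $f_\tau$ must miss some $\ell$. Concretely, $\{i,j\}$ may solve only functions $f_\tau$ whose ranges avoid a common value.

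A first attempt shows where the difficulty lies. Take $f_a=c_a$ and $f_{ab}=c_{c}$, where $c$ is the third index. Levels one and two then check out. The root, however, fails: any pair agreeing in at least one coordinate solves some $f_a$ together with the appropriate level-two functions, and these exclude all of $0,1,2$. Only the four antipodal pairs survive, yet every $g:8\to 2$ has at least $12$ solutions by Lemma \ref{lem:Turan_bound}. So the level-two functions cannot simply be coordinates. Instead, they will be chosen so that, for many pairs, the solution sets of $f_{ab}$ concentrate on a single value. The aim is that every solution of $g$ be "blamed" on one fixed $\ell$.

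Concretely, I will fix $g$ first, say $g=(0123)(4567)$, with $12$ solutions. I will try to arrange that the solutions of $g$ split into three classes $S_0,S_1,S_2$ with the following property: pairs in $S_\ell$ solve no $f_\tau$ with $\ell\in\operatorname{ran}(\tau)$. That is, a pair in $S_\ell$ may solve $f_a,f_b,f_{ab},f_{ba}$ (where $\{a,b,\ell\}=\{0,1,2\}$) but none of $f_\ell$, $f_{\ell x}$, $f_{x\ell}$. Within each fiber of $g$ (a $K_4$), I take $S_\ell$ to be the $\ell$-th of its three perfect matchings, as in the warm-up $\id_3\le\wphp{}{4}{2}$. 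Each $f_\ell$ must then avoid $S_\ell$ entirely and may contain only the other two matchings, together with pairs across the fibers of $g$. The same holds for $f_{\ell x}$ and $f_{x\ell}$. Subject to this, each level-one triple $f_a,f_{ab},f_{ac}$ must have no common solution.

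The step I expect to be the main obstacle is this simultaneous choice of the nine non-root functions. Each needs at least $12$ solutions and must avoid prescribed matchings inside both fibers of $g$, so most of its solutions are forced to cross between fibers. I will first try functions whose fibers are of the form $\{$two points from each $g$-fiber$\}$, chosen so that within each $g$-fiber they pair up according to an allowed matching. I will then adjust the cross pairs until the level-one no-common-solution conditions hold, checking them by a short case analysis. Should this ansatz fail, I would relax $g$ to an unbalanced function, for example with fibers of size $3$ and $5$, and repeat the search.
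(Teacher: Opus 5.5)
Your reduction to Theorem~\ref{thm:C_k_leW_m_iff_functions_indexed_by_injections} and your unpacking of its condition are correct. The leaves are automatic. At $\sigma=a$ the requirement is exactly that $f_a,f_{ab},f_{ac}$ have no common solution. At the root it amounts to splitting the $12$ solutions of $g$ into classes $S_0,S_1,S_2$ so that pairs in $S_\ell$ solve no $f_\tau$ with $\ell\in\operatorname{ran}(\tau)$. However, the proposition is a finite existence statement, and its entire content is the witness, which you never produce. You describe a search for the nine non-root functions but do not carry it out, and you keep a fallback (``relax $g$'') in case it fails. As written, nothing has been proved.

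Your ansatz does succeed, and it has exactly the shape of the paper's witness. Take $f_\emptyset=(0123)(4567)$, $f_0=(0145)(2367)$, $f_1=f_{21}=(0246)(1357)$, $f_2=f_{02}=(0347)(1256)$, $f_{01}=f_{10}=(0167)(2345)$, $f_{12}=(0257)(1346)$, $f_{20}=(0356)(1247)$. Every non-root fiber takes two points from each fiber of $f_\emptyset$. The classes are:
$S_2=\{\{0,1\},\{2,3\},\{4,5\},\{6,7\}\}$, $S_0=\{\{0,2\},\{1,3\},\{4,6\},\{5,7\}\}$, $S_1=\{\{0,3\},\{1,2\},\{4,7\},\{5,6\}\}$.

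Verification is painless in binary. View $8$ as $\mathbb{F}_2^3$ with bits $b_0,b_1,b_2$. Then $f_\emptyset=b_2$, $f_0=b_1$, $f_1=b_0$, $f_2=b_0+b_1$, $f_{01}=b_1+b_2$, $f_{12}=b_0+b_2$, $f_{20}=b_0+b_1+b_2$, and $\{x,y\}$ solves a linear functional $\lambda$ iff $\lambda(x\oplus y)=0$.

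For level one: three such functions have no common solution iff they are linearly independent, which settles all three triples at once.

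For the root: solutions of $f_\emptyset$ are the pairs with $x\oplus y\in\{1,2,3\}$. Among the non-root functions, these are solved only by $f_0,f_{01},f_{10}$, by $f_1,f_{12},f_{21}$, and by $f_2,f_{02},f_{20}$ respectively. So $\ell=2,0,1$ works in the three cases.
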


\begin{proof}
We shall use Theorem \ref{thm:C_k_leW_m_iff_functions_indexed_by_injections}. Consider the following functions from $8$ to $2$:
\begin{align*}
f_\emptyset &= (0123)(4567) \\
f_0 &= (0145)(2367) \\
f_{01} = f_{10} &= (0167)(2345) \\
f_2 = f_{02} &= (0347)(1256) \\
f_1 = f_{21} &= (0246)(1357) \\
f_{12} &= (0257)(1346) \\
f_{20} &= (0356)(1247)
\end{align*}
It is perhaps clearer to visualize them in a tree,
where each node corresponds to an injection from a proper initial segment of $3$ to $3$,
obtained by reading the labels above it from top to bottom:
\begin{center}
\begin{tikzpicture}
	[level distance=15mm,level/.style={sibling distance=42mm/#1}]
	\node {(0123)(4567)}
		child {node {(0145)(2367)}
			child {node {(0167)(2345)} edge from parent node[left] {1}}
			child {node {(0347)(1256)} edge from parent node[right] {2}}
		edge from parent node[above] {0}}
		child {node {(0246)(1357)}
			child {node {(0167)(2345)} edge from parent node[left] {0}}
			child {node {(0257)(1346)} edge from parent node[right] {2}}
		edge from parent node[left] {1}}
		child {node {(0347)(1256)}
			child {node {(0356)(1247)} edge from parent node[left] {0}}
			child {node {(0246)(1357)} edge from parent node[right] {1}}
		edge from parent node[above] {2}};
\end{tikzpicture}
\end{center}
One may check that for each of the $\binom{8}{2} = 28$ possible $\{i,j\}$ and each of the $f_\sigma$ with $\{i,j\}$ as a solution,
there is some $\ell < 3$ such that for all $\tau$ extending $\sigma$ with $\{i,j\}$ as a solution for $f_\tau$,
$\ell$ is not in the range of $\tau$. For example, $\{0,6\}$ is a solution of $f_{01}$, $f_1$, $f_{10}$, $f_{20}$, and $f_{21}$.
The only interesting case is $f_1$ (since $10$ extends $1$),
but since $\{0,6\}$ is \emph{not} a solution for $f_{12}$,
we can choose $\ell = 2$ in this case.
\end{proof}

\bibliographystyle{plain}
\bibliography{reductions_php}

\end{document}